\newtheorem{theorem}{Theorem}[section]
\newtheorem{corollary}[theorem]{Corollary}
\newtheorem{lemma}[theorem]{Lemma}
\newtheorem{proposition}[theorem]{Proposition}
\newtheorem{definition}[theorem]{Definition}
\newtheorem{remark}[theorem]{Remark}
\newtheorem{assumption}[theorem]{Assumption}
\numberwithin{equation}{section}
\begin{document}
	
\title{Well-posedness of the growth-coagulation equation with singular kernels} 
%\thanks{}
\author{Ankik Kumar Giri $^{1,\ast}$}
\address{$^1$ Department of Mathematics, Indian Institute of Technology Roorkee, Roorkee-247667, Uttarakhand, India}
\address{$\ast$ Corresponding author. Tel +91-1332-284818 (O);  Fax: +91-1332-273560}
\email{ankik.giri@ma.iitr.ac.in}	

\author{Philippe Lauren\c{c}ot $^2$}
\address{$^2$ Laboratoire de Math\'ematiques (LAMA) UMR~5127, Universit\'e Savoie Mont Blanc, CNRS, F--73000 Chamb\'ery, France}
\email{philippe.laurencot@univ-smb.fr}
\author{Saroj Si $^1$}
\email{ssaroj@ma.iitr.ac.in}

\keywords{Growth-coagulation; singular coagulation kernels; weak compactness; existence; uniqueness}
\subjclass{45K05; 35F25; 82C21}

\date{\today}

%%%%%%%%%%%%%%%%
%%%%%%%%%%%%%%%%
\begin{abstract}
	The well-posedness of the growth-coagulation equation is established for coagulation kernels having singularity near the origin and growing atmost linearly at infinity. The existence of weak solutions is shown by means of the method of the characteristics and a weak $L_1$-compactness argument. For the existence result, we also show our gratitude to Banach fixed point theorem and a refined version of the Arzel\'{a}-Ascoli theorem. In addition, the continuous dependence of solutions upon the initial data is shown with the help of the DiPerna-Lions theory, Gronwall's inequality and moment estimates. Moreover, the uniqueness of solution follows from the continuous dependence. The results presented in this article extend the contributions made in earlier literature. 
\end{abstract}
%%%%%%%%%%%%%%%%
%%%%%%%%%%%%%%%%

\maketitle

%%%%%%%%%%%%%%%%
%%%%%%%%%%%%%%%%
\section{introduction}
%%%%%%%%%%%%%%%%
%%%%%%%%%%%%%%%%

Coagulation is the process of aggregation where two or more smaller particles or clusters combine to form a larger particle or cluster.  In nature, this process is found in phytoplankton aggregation \cite{Adler1997}, polymer formation \cite{Ziff1980}, powder formation in industry \cite{Wells2018}, droplet formation in clouds \cite{Friedlander2000} and formation of planets in astrophysics \cite{Lissauer1993, Safronov1972}. These physical processes are modelled mathematically by partial integro-differential equations which can be read as
\begin{align}
	\partial_{t}c(t,v) &= 	Q(c)(t,v)  \quad \text{for} \; (t,v) \in (0,\infty)^2,\label{eq:1.1} \\
	c(0,v)&=c_0(v) \geq 0,\label{eq:1.2}
\end{align}
where 
\begin{align*}
	Q(c)(v_{1})&= \frac{1}{2} \int_{0}^{v_{1}} K(v_{1}-v_{2},v_{2})c(v_{1}-v_{2})c(v_{2})dv_{2} \nonumber \\ 
	\;\;&\qquad -\int_{0}^{\infty} K(v_{1},v_{2})c(v_{1})c(v_{2})dv_{2}.  
\end{align*}
Here, $K(v_{1},v_{2})$ denotes the coagulation kernel which gives the rate at which particles of sizes $v_{1}$ and $v_{2}$ merge to form larger particles of size $v_{1}+v_{2}$ and is a non-negative \emph{symmetric function} on $(0,\infty)^{2}$. The function $c(t,v_{1})$ represents the density of particles with size $v_{1}>0$ at time $t\geq 0$. The first integral in the definition of $Q(c)$ describes the formation of particles of size $v_{1}$ due to the interaction between the particles of sizes $v_{1}-v_{2}$ and $v_{2}$. On the other hand, the second integral in the definition of $Q(c)$ accounts for the loss of particles of size $v_{1}$ due to their merging with particles of arbitrary size.
A great amount of progress in this direction has been made over the last hundred years, see for example \cite{BLL_book} and the references therein. There is one more kinetic process which can be associated with coagulation, termed as growth process. The growth associated with coagulation is governed by the following mathematical equation
\begin{align}	
	\partial_{t}c(t,v) + \partial_{v}(gc)(t,v) &= 	Q(c)(t,v)  \quad \text{for} \; (t,v) \in (0,\infty)^2, \label{eq:1.3}\\
	c(0,v)&=c_0(v) \geq 0, \label{eq:1.4}  
\end{align}
where $g(t,v)\ge 0$ represents the rate at which external matter is sticking to the surface of the particles of size $v$, see \cite{AcklehDeng2003, BLL_book, Berry1969, Gajewski1983, GajewskiZacharias1982, LevinSedunov1968,  LushnikovKulmala2000} and the references therein, so that the second term on the left-hand side of~\eqref{eq:1.3} represents the growth of the  particles of size $v_{1}$. In contrast to the Smoluchowski coagulation equation~\eqref{eq:1.1}, the initial value problem~\eqref{eq:1.3}--\eqref{eq:1.4} has received less attention in the mathematical literature. As far as the existence of a solution to \eqref{eq:1.3}--\eqref{eq:1.4} is concerned, the initial investigation is conducted by Gajewski and Zacharias \cite{GajewskiZacharias1982}, where existence is demonstrated in the Hilbert space $L^2$ for $g\in W^{2,\infty}(0,\infty)$ and the class of coagulation kernels satisfying
	\begin{equation}
		0\leq K(v_{1},v_{2}) = K(v_2,v_1) \leq C_{0}\left(1+(v_{1}v_{2}\right)^{-\beta_0}), \quad (v_{1}, v_{2})\in (0,\infty)^{2}, \label{ge_1}
	\end{equation}
for some $\beta_0\ge 0$ and $C_0>0$ and
\begin{equation*}
 -C_{1} \left(1+\frac{1}{v_{1}}\right)K(v_{1},v_{2})\leq \partial_{v_{1}}K (v_{1},v_{2})\leq 0,  \quad (v_{1}, v_{2})\in (0,\infty)^{2},
\end{equation*}
for some $C_1>0$. In a subsequent study \cite{Gajewski1983}, Gajewski extends the existence result obtained in \cite{GajewskiZacharias1982} to the class of coagulation kernels which satisfies only~\eqref{ge_1} using a semigroup theory approach in the Hilbert space $L^{2}$ and also includes an additional source term. However, the contributions in \cite{GajewskiZacharias1982, Gajewski1983} do not cover the physically relevant weighted $L^1$-space and coagulation kernels which are unbounded for large sizes. Global existence of solutions to \eqref{eq:1.3}--\eqref{eq:1.4} and uniqueness are studied in \cite{AcklehDeng2003, Banasiak2012} for bounded coagulation kernels and in \cite{FriedmanReitich1990} for the class of unbounded coagulation kernels having the following growth condition,
\begin{equation*}
	K(v_{1},v_{2})\leq \sigma(v_{1}+v_{2}),\quad (v_{1}, v_{2})\in [1,\infty)^{2},
\end{equation*} 
where $\sigma$ is a sublinear function at infinity; that is, $\sigma(v)/v\to 0$ as $v\to\infty$. Related models involving growth and multidimensional coagulation have been studied in \cite{CristianVelazquez, HingantSepulveda2015}.

The primary focus of this article is to investigate the existence and uniqueness to \eqref{eq:1.3}--\eqref{eq:1.4} in the weighted $L^{1}$ space for the class of coagulation kernels that exhibit singularities for small  sizes while being unbounded for large ones and is mainly motivated by \cite{AcklehDeng2003, ColletGoudon1999, FriedmanReitich1990, Gajewski1983, Laurencot2001}, with the aim of including a broader class of coagulation kernels in the forthcoming analysis. 

The structure of the paper unfolds as follows: In \Cref{Assumption and Main Result}, we outline the assumptions on coagulation kernels $K$, growth rate $g$, and initial data $c_0$. This section also introduces the solution concept and highlights the main theorems of the paper. Moving on to \Cref{Characteristics}, we delve into characteristics and the mild formulation of our problem~\eqref{eq:1.3}--\eqref{eq:1.4}. Dedicated to demonstrating the existence of global weak solutions for \eqref{eq:1.3}--\eqref{eq:1.4}, \Cref{Global existence of weak solutions} takes center stage. Firstly, we define the truncated problem \eqref{truncated-1}--\eqref{truncated-2}. Subsequently, we establish the existence of a unique global mild solution to \eqref{truncated-1}--\eqref{truncated-2} using the Banach fixed-point theorem, following the methodology in \cite{ColletGoudon1999}. This section also encompasses the derivation of moment estimates for the mild solution, building on the insights from \cite{Laurencot2001} and relying on the method of weak $L_1$-compactness introduced in \cite{Stewart1989}. Finally, in the last section, we turn our attention to demonstrating the uniqueness and continuous dependence on the initial data of solutions to~\eqref{eq:1.3}--\eqref{eq:1.4}.

%%%%%%%%%%%%%%%%
%%%%%%%%%%%%%%%%
\section{ assumption and main result}\label{Assumption and Main Result}
%%%%%%%%%%%%%%%%
%%%%%%%%%%%%%%%%

%%%%%%%%%%%%%%%%
\begin{assumption}\label{Assumption}
	Throughout this paper, we impose the following assumptions.
	
	\begin{enumerate}[label=(\roman*)]
		\item 	  $K$ is a non-negative measurable symmetric function on $(0,\infty)^2.$
		There are $\beta > 0$  and $k,k_{1}>0$ such that
		
		\begin{align}
			K(v_{1},v_{2}) = K(v_{2},v_{1}) \leq
			\left\{
			\begin{aligned} \label{coagassum}
				&k(v_{1}v_{2})^{-\beta},\quad(v_{1},v_{2})\in (0,1)^2,\\   
				&kv_{2}v_{1}^{-\beta},\qquad(v_{1},v_{2})\in (0,1) \times (1,\infty),\\
				&k(v_{1}+v_{2}),\quad(v_{1},v_{2})\in (1,\infty)^2,
			\end{aligned} 
			\right.
		\end{align}
	
	\item $g$ is a non-negative continuous function on $[0,\infty)^2$ and twice differentiable with respect to $v_{1}$ with
	\begin{align}
		g(t,0)=0 \text{ for all}\quad t\geq 0, \quad |\partial_{v_{1}} g| < A, \quad |\partial_{v_{1}}^2g| < B,	\label{growthassum}
	\end{align}
	where $A$ and $B$ are positive real numbers.
	
		\item The initial condition $c_0$ satisfies the following condition
	\begin{align}
		0\leq c_0 \in L_{-2\beta,1}^{1}(0,\infty)  := L^1\big(0,\infty;\big(v+v^{-2\beta}\big)dv\big).\label{initialassum}
	\end{align} 
	
\end{enumerate}

\end{assumption}
%%%%%%%%%%%%%%%%

For further use we set $L_{m,r}^{1}(0,\infty):=L^1\big(0,\infty;\big(v^{m}+v^{r}\big)dv\big)$ for $(m,r)\in\mathbb{R}^2$ and $L_m^1(0,\infty):=L_{m,m}^1(0,\infty)$,  and define the moment of order $m$ by
\begin{equation*}
	M_m(u) := \int_0^\infty v^m u(v)\,dv, \quad u\in L_m^1(0,\infty).
\end{equation*}.
We also denote the positive cone of $L_{m,r}^{1}(0,\infty)$ by $L_{m,r,+}^{1}(0,\infty)$.

Now, let us discuss a class of physically relevant unbounded coagulation kernels that satisfy~\eqref{coagassum}. This class includes Smoluchowski's coagulation kernel $K_1$ \cite{Smoluchowski1917}, the Granulation kernel $K_2$ \cite{Kapur1972}, and stochastic stirred froths $K_3$ \cite{ClarkKatsouros1999}. These kernels take the following forms
\begin{equation*}%\label{K_1}
	K_1(v_{1},v_{2}) = \left({v_{1}}^{\frac{1}{3}}+{v_{2}}^{\frac{1}{3}}\right)\left({v_{1}}^{-\frac{1}{3}}+{v_{2}}^{-\frac{1}{3}}\right),
\end{equation*}
\begin{equation*}%\label{K_2}
	K_2(v_{1},v_{2}) = \frac{(v_{1}+v_{2})^{\theta_{1}}}{(v_{1}v_{2})^{\theta_{2}}},\quad \theta_{1}\leq 1 \;\text{and} \; \theta_{2}\geq 0,
\end{equation*}
\begin{equation*}%\label{K_3}
	K_3(v_{1},v_{2}) = \left({v_{1}v_2}\right)^{-\theta}, \quad \theta>0,
\end{equation*}
for $(v_1,v_2)\in (0,\infty)^2$ and satisfy~\eqref{coagassum} with $\beta=\frac{1}{3}$, $\beta=\theta_2$, and $\beta=\theta$, respectively.

Before presenting our main results, we provide the definition of a weak solution to \eqref{eq:1.3}--\eqref{eq:1.4} that we will use in the subsequent sections.

%%%%%%%%%%%%%%%%
\begin{definition} \label{definitionofsolution}
	Let $K$ and $g$ be two functions satisfying~\eqref{coagassum}--\eqref{growthassum} and $c_0$ be an initial condition satisfying~\eqref{initialassum}. For $T>0$, a weak solution to~\eqref{eq:1.3}--\eqref{eq:1.4} on $[0,T]$ is a non-negative function 
	\begin{equation}
		c \in C\left([0,T];w-L_{-\beta,1}^1(0,\infty)\right) \label{weak formulation-0} 
	\end{equation}
	satisfying, for each $t \in [0, T]$ and $\phi \in W^{1,\infty}\big(0,\infty\big)$,
	\begin{equation} \label{weak formulation}
		\begin{split}
		\int_{0}^{\infty} c(t,v)\phi(v)dv  & =   	\int_{0}^{\infty} c_0(v)\phi(v)dv   
		+\int_{0}^{t}\int_{0}^{\infty} \phi^{\prime}(v)g(s,v)c(s,v)dvds \\
		&+\frac{1}{2}\int_{0}^{t}\int_{0}^{\infty} \int_{0}^{\infty}\tilde{\phi}(v_{1},v_{2}) K(v_{1},v_{2})c(s,v_{2}) c(s,v_{1})dv_{1} dv_{2}ds,
		\end{split}
	\end{equation}
where 
\begin{equation*}
	\tilde{\phi}(v_{1},v_{2}):=\phi(v_{1}+v_{2})-\phi(v_{1})-\phi(v_{2}).
\end{equation*}
We shall say that a weak solution to~\eqref{eq:1.3}--\eqref{eq:1.4} is global if it is a weak solution to~\eqref{eq:1.3}--\eqref{eq:1.4} on $[0,T]$ for all $T>0$.
\end{definition}
%%%%%%%%%%%%%%%%

Let us mention at this point that, if $\phi \in W^{1,\infty}(0,\infty)$, then
\begin{align*} 
	\left|\phi^{\prime}(v_1) g(t,v_1)\right| & \leq g(t,v_1)\|\phi\|_{ W^{1,\infty}(0,\infty)} \leq A \|\phi\|_{ W^{1,\infty}(0,\infty)} v_1, \\
	\tilde{\phi}(v_1,v_2) K(v_1,v_2) & \le 3 \|\phi\|_{L^\infty(0,\infty)} \left( v_1 + v_1^{-\beta} \right) \left( v_2 + v_2^{-\beta} \right),
\end{align*}
for $t\in [0,T)$ and $(v_1,v_2)\in (0,\infty)^3$ by~\eqref{coagassum} and~\eqref{growthassum}, so that, thanks to \eqref{weak formulation-0}, all the integrals in \eqref{weak formulation} are well-defined.

\medskip
	
Let us now state the main results of this article.

%%%%%%%%%%%%%%%%
\begin{theorem} \label{main theorem}
	Suppose that $K$, $g$, and $c_0$ satisfy~\eqref{coagassum}, \eqref{growthassum} and~\eqref{initialassum}, respectively. Then there exists at least one global weak solution $c$ to~\eqref{eq:1.3}--\eqref{eq:1.4} in the sense of \Cref{definitionofsolution} satisfying also
	\begin{equation*}
		c \in L^{\infty}\big(0,T;L_{-2\beta,1}^1(0,\infty)\big) \;\;\text{ for all }\;\; T>0.
	\end{equation*}	
\end{theorem}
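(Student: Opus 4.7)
The plan is to follow a truncation-compactness scheme in the spirit of Stewart's method, adapted here to accommodate the transport growth term via the method of characteristics and the singularity of $K$ at the origin. The procedure has four stages: construct mild solutions $c_n$ to a suitable truncated problem, derive uniform bounds on weighted moments and a superlinear equi-integrability functional, extract a subsequence converging in the weak $L^1$ sense uniformly in time, and finally pass to the limit in the weak formulation~\eqref{weak formulation}.

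For the truncated problem, I would replace $K$ by the bounded truncation $K_n(v_1,v_2) := K(v_1,v_2)\mathbf{1}_{[1/n,n]^2}(v_1,v_2)$. The characteristics $V(\cdot;s,v)$ associated with $g$ are well-defined globally and Lipschitz in $v$ thanks to~\eqref{growthassum}, so the truncated Cauchy problem can be rewritten along the flow as a mild integral equation $c = \mathcal{F}_n[c]$ in $C([0,T];L^1)$. Since $K_n$ is bounded, $\mathcal{F}_n$ contracts on a small time interval, yielding a unique local non-negative mild solution $c_n$ by the Banach fixed-point theorem; non-negativity comes from monotonicity of $\mathcal{F}_n$ on the positive cone, and global extension will follow from the moment bounds below.

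For the uniform moment estimates, testing with $\phi(v)=v$ gives $\tilde{\phi}\equiv 0$, so the coagulation term vanishes and the growth contribution yields $\frac{d}{dt}M_1(c_n)\leq A M_1(c_n)$, hence $M_1(c_n)(t)\leq e^{At} M_1(c_0)$. Testing with (a regularized version of) $\phi(v)=v^{-2\beta}$, the coagulation contribution is non-positive since $\tilde{\phi}(v_1,v_2)\leq 0$ (as $2\beta>0$ and $(v_1+v_2)^{-2\beta}\leq v_i^{-2\beta}$), and the growth contribution is non-positive after integration by parts (using $g(t,0)=0$ to kill the boundary term), so $M_{-2\beta}(c_n)(t)\leq M_{-2\beta}(c_0)$. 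Together these give uniform boundedness of $(c_n)$ in $L^\infty(0,T;L^1_{-2\beta,1}(0,\infty))$. A de la Vall\'ee-Poussin construction applied to $c_0$, in the spirit of~\cite{Laurencot2001}, then produces a convex superlinear $\Phi$ with $\sup_{n,t}\int_0^\infty \Phi(c_n(t,v))(v^{-2\beta}+v)\,dv<\infty$, supplying the equi-integrability needed in the next step.

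Compactness and the limit then proceed as follows. By the Dunford-Pettis theorem, $\{c_n(t,\cdot)\}_n$ is relatively weakly compact in $L^1_{-\beta,1}(0,\infty)$ for each $t\in[0,T]$; and the weak formulation together with the moment bounds yields equicontinuity in $t$ of the maps $t\mapsto \int_0^\infty c_n(t,v)\phi(v)\,dv$ for $\phi\in W^{1,\infty}(0,\infty)$. A refined Arzel\`a-Ascoli theorem then extracts a subsequence converging in $C([0,T];w\text{-}L^1_{-\beta,1}(0,\infty))$ to a non-negative limit $c$ with the required regularity. The main obstacle is passing to the limit in the bilinear coagulation integral, because near the origin $K\sim (v_1v_2)^{-\beta}$, a factor that is not controlled by weak $L^1$-convergence alone. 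I would circumvent this by splitting the integration into small ($v_i<\delta$), large ($v_i>R$), and intermediate regions: the small-size contribution is bounded via $|\tilde{\phi}|K\lesssim v_1^{-\beta}v_2^{-\beta}$ together with the estimate $\int_0^\delta v^{-\beta}c_n\,dv\leq \delta^\beta M_{-2\beta}(c_n)$ and so vanishes as $\delta\to 0$; the large-size contribution is controlled by $|\tilde{\phi}|K\leq C(v_1+v_2)$ together with the $M_1$ bound and vanishes as $R\to\infty$; on the bounded intermediate region $K$ is bounded, so equi-integrability combined with weak-strong convergence of the product closes the argument and identifies $c$ as the desired weak solution.
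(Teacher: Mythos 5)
Your overall architecture matches the paper's (truncate the kernel, solve the truncated problem by a Banach fixed point along the characteristics of $g$, derive the $M_1$ and $M_{-2\beta}$ bounds, extract a limit by Dunford--Pettis plus a refined Arzel\`a--Ascoli theorem, and pass to the limit after splitting the coagulation integral into small, intermediate and large sizes). However, the two estimates on which the whole compactness and limit passage hinge are precisely the ones you do not establish, and one of them is asserted in a form that is unlikely to be provable as stated. First, your key equi-integrability claim, $\sup_{n,t}\int_0^\infty \Phi(c_n(t,v))\,(v^{-2\beta}+v)\,dv<\infty$ for a superlinear convex $\Phi$ built from $c_0$, is given no argument, and a direct attempt to propagate this functional runs into the singularity of $K$: in the gain term one is led to bound quantities of the type
\begin{equation*}
\int_0^1\int_0^1 (v_1+v_2)^{-2\beta}\,(v_1v_2)^{-\beta}\,\Phi(c_n(v_1))\,c_n(v_2)\,dv_2\,dv_1 \;\lesssim\; M_{-\beta}(c_n)\int_0^1 v_1^{-3\beta}\,\Phi(c_n(v_1))\,dv_1,
\end{equation*}
so the weight exponent degrades from $-2\beta$ to $-3\beta$ and the Gronwall closure fails; this is exactly why the paper does \emph{not} propagate a $\Phi$-functional with the singular weight, but instead propagates the modulus of uniform integrability $\mathcal{E}_{n,\delta}(t)=\sup\{\int_Z v^{-\beta}c_n(t,v)\,dv:\ |Z|\le\delta\}$ through the \emph{mild} (characteristics) formulation, using the two auxiliary lemmas controlling the Lebesgue measure of $Y(s;t,Z)$ and a Gronwall argument seeded by the absolute continuity of $\int v^{-\beta}c_0\,dv$. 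You need either that argument or a genuinely different mechanism to obtain, uniformly in $n$ and $t$, smallness of $\int_Z v^{-\beta}c_n(t,v)\,dv$ on sets of small measure; without it, neither the Dunford--Pettis step in the weighted space nor the passage to the limit in the intermediate region of the bilinear term is justified.

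Second, your treatment of the large-size region is not sufficient: the $M_1$ bound only gives $\int_R^\infty v\,c_n(t,v)\,dv\le C(T)$, which is bounded but does not tend to $0$ as $R\to\infty$ uniformly in $n$ and $t$, whereas both the tightness needed for weak compactness in $L^1_{-\beta,1}$ and the vanishing of the large-size truncation error in the coagulation integral require $\lim_{R\to\infty}\sup_n\sup_{t\in[0,T]}\int_R^\infty v\,c_n(t,v)\,dv=0$. This is obtained in the paper by propagating a superlinear-in-$v$ moment: de la Vall\'ee-Poussin is applied to the measure $v\,c_0(v)\,dv$ in the size variable (the class $\Gamma[c_0]$), and the bound $\int j(v)c_n(t,v)\,dv\le C(T)$ is proved using the sublinearity structure $(v_1+v_2)\widetilde{j_R}\le 2(v_2 j_R(v_1)+v_1 j_R(v_2))$ together with the $M_{-2\beta}$ and $M_1$ bounds; your proposal applies de la Vall\'ee-Poussin only to the \emph{values} of $c_0$, which cannot deliver this tail estimate. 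A minor further point: non-negativity of the fixed point does not follow from ``monotonicity of $\mathcal{F}_n$'' (the loss term destroys monotonicity); the standard fix, used in the paper, is to add a damping term $\kappa_n u$ to both sides so that $Q_n(u)+\kappa_n u\ge 0$ on the invariant set, and then the mild formula manifestly preserves positivity.
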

%%%%%%%%%%%%%%%%

The proof of \Cref{main theorem} utilizes a method of characteristics introduced in \cite{ColletGoudon1999} and subsequently employs a weak compactness approach within the space $L_{-2\beta,1}^1(0,\infty)$, a methodology initially introduced in \cite{Stewart1989} for the coagulation-fragmentation equation and later adapted in \cite{Laurencot2001} for coagulation kernels $K$ growing at most linearly for large sizes; that is, there is $C>0$ such that $K(v_1,v_2) \le C(1+v_1+v_2)$ for $(v_1,v_2)\in (0,\infty)^2$.

\medskip

In addition to establishing the existence of weak solutions, a result on uniqueness is proved for a limited class of initial data $c_{0}$.

%%%%%%%%%%%%%%%%
\begin{theorem} \label{uniqueness theorem}
	Suppose that $K$ and $g$ satisfy~\eqref{coagassum} and~\eqref{growthassum}, and consider $c_0 \in L_{-2\beta,2,+}^1(0,\infty)$. Then there exists a unique global weak solution $c$ to~\eqref{eq:1.3}--\eqref{eq:1.4}  in the sense of  \Cref{definitionofsolution} such that
	\begin{equation}\label{solsecondmomentbound}
		c\in L^{\infty}\big(0,T;L_{-2\beta,2}^1(0,\infty)\big) \;\;\text{ for all }\;\; T>0.
	\end{equation}
\end{theorem}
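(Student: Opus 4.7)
The plan is to proceed in two stages: first, upgrade \Cref{main theorem} to produce a weak solution satisfying the additional bound \eqref{solsecondmomentbound}; second, prove continuous dependence of such solutions upon the initial data, from which uniqueness is immediate. Since $L_{-2\beta,2,+}^{1}(0,\infty) \subset L_{-2\beta,1,+}^{1}(0,\infty)$, \Cref{main theorem} already delivers a weak solution $c \in L^{\infty}(0,T; L_{-2\beta,1}^1)$, so only an $M_2$-bound is missing. To derive it, I would test \eqref{weak formulation} with the truncation $\phi_R(v) = v\min(v,R) \in W^{1,\infty}(0,\infty)$: noting $0 \le \phi_R'(v) \le 2v$ and $0 \le \widetilde{\phi_R}(v_1,v_2) \le 2 v_1 v_2$, the growth contribution is at most $2A M_2(c(s))$, while on $(1,\infty)^2$ the coagulation integrand is dominated by $2k v_1 v_2(v_1+v_2) c(s,v_1) c(s,v_2)$, integrating to a factor proportional to $M_1(c(s))M_2(c(s))$; the singular and mixed regions are controlled by the lower moments already bounded via \Cref{main theorem}. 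Monotone convergence as $R\to\infty$ together with Gronwall's inequality then closes the $M_2$-estimate.

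For uniqueness, let $c_1,c_2$ be two weak solutions sharing the datum $c_0$ and both lying in $L^{\infty}(0,T; L_{-2\beta,2}^1)$. Their difference $D := c_1 - c_2$ satisfies, in the distributional sense,
\begin{equation*}
\partial_t D + \partial_v(gD) = Q(c_1) - Q(c_2), \qquad D(0,\cdot) = 0.
\end{equation*}
Since $g$ is continuous with $\partial_v g$ bounded, the DiPerna-Lions theory for linear transport equations yields a renormalized formulation; testing the resulting equation for $|D|$ against (appropriate truncations of) the weight $\phi(v) = v + v^{-2\beta}$ produces
\begin{equation*}
\int_0^\infty \phi(v)|D(t,v)|\,dv \le \int_0^t \int_0^\infty \phi'(v) g(s,v) |D(s,v)|\,dv\,ds + \int_0^t \int_0^\infty \phi(v) |Q(c_1)-Q(c_2)|(s,v)\,dv\,ds.
\end{equation*}
The transport term is at most $C \int_0^t \int \phi |D|\,ds$ using $g(s,v)\le Av$ and $|v \phi'(v)| \le (1+2\beta)\phi(v)$. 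For the source, the decomposition $c_1(v)c_1(w) - c_2(v)c_2(w) = D(v)c_1(w) + c_2(v) D(w)$ applied in both the gain and loss pieces of $Q(c_1)-Q(c_2)$, together with the symmetry of $K$ and Fubini, yields
\begin{equation*}
\int_0^\infty \phi(v) |Q(c_1)-Q(c_2)|(s,v)\,dv \le C \bigl( \|c_1(s)\|_{L_{-2\beta,2}^1} + \|c_2(s)\|_{L_{-2\beta,2}^1}\bigr) \int_0^\infty \phi(v) |D(s,v)|\,dv,
\end{equation*}
where the weight $v^{-2\beta}$ precisely absorbs the singularity of $K$ near the origin and the weight $v$, paired with the second moment of $c_i$, accommodates the linear growth of $K$ at infinity. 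Gronwall's inequality then forces $D \equiv 0$.

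The main obstacle is the careful implementation of the DiPerna-Lions renormalization in this weighted setting with the singular weight $v^{-2\beta}$: one must verify that $gD$ and $Q(c_1)-Q(c_2)$ lie in the appropriate weighted $L^1$-space (resting on the class $L_{-2\beta,2}^1$ and Assumption~\ref{Assumption}) and that the classical commutator between the mollification and the transport operator vanishes in this weighted norm as the regularization parameter tends to zero. The second-moment bound established in the first stage is precisely what keeps both sides of the bilinear source estimate finite; once these technicalities are settled, the closing Gronwall step is routine.
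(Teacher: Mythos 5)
Your overall architecture (existence with a second-moment bound, then continuous dependence via DiPerna--Lions renormalization and Gronwall) matches the paper, but the key coagulation estimate in your uniqueness stage does not hold. You bound the source by putting absolute values separately on the gain and loss pieces and claim
\begin{equation*}
\int_0^\infty \phi(v)\,|Q(c_1)-Q(c_2)|(s,v)\,dv \le C\bigl(\|c_1(s)\|_{L_{-2\beta,2}^1}+\|c_2(s)\|_{L_{-2\beta,2}^1}\bigr)\int_0^\infty \phi(v)\,|D(s,v)|\,dv,
\end{equation*}
with $\phi(v)=v+v^{-2\beta}$. This is false for kernels growing linearly at infinity: in the gain term one must control $\phi(v_1+v_2)K(v_1,v_2)c_i(v_1)|D(v_2)|$ on $(1,\infty)^2$, and since $\phi(v_1+v_2)K(v_1,v_2)\gtrsim (v_1+v_2)^2$ this produces a term of order $\int v^2|D(s,v)|\,dv$, which is finite but is \emph{not} dominated by $C\int\phi|D|$ (concentrate $|D|$ near a large size $V$: the ratio of the two quantities grows like $V$). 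Hence the Gronwall loop does not close with the crude triangle inequality. The paper avoids exactly this obstruction by estimating $\int w\,\mathrm{sign}(E)\,S$ rather than $\int w|S|$: after multiplying by $\mathrm{sign}(E)$, the quantity $W(s,v_1,v_2)E(s,v_2)$ with $W=w(v_1+v_2)\mathrm{sign}(E(v_1+v_2))-w(v_1)\mathrm{sign}(E(v_1))-w(v_2)\mathrm{sign}(E(v_2))$ is bounded by $(2v_1+v_1^{-\beta})|E(s,v_2)|$, so the quadratic weight falls on $c_1+c_2$ (whose second moment is finite by \eqref{z014}) and only the linear-plus-singular weight falls on $|E|$. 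This cancellation between gain and loss is the missing idea; without it uniqueness for $K\le k(v_1+v_2)$ at infinity cannot be closed in your weighted norm. A secondary problem is your weight choice: even with the sign trick, $\phi(v)=v+v^{-2\beta}$ is too singular near the origin, since on $(0,1)^2$ it forces terms like $v_1^{-3\beta}(c_1+c_2)(v_1)$, i.e.\ a moment $M_{-3\beta}$ that is not available; the paper's weight $w(v)=v+v^{-\beta}$ is calibrated so that only $M_{-2\beta}(c_i)$ is needed.

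Your existence stage also has a soft spot as written. First, $\phi_R(v)=v\min(v,R)$ is unbounded, hence not an admissible test function in \Cref{definitionofsolution}; use $\min(v,R)^2$ (or the paper's $j_R=\min(j,j(R))$ with $j(v)=v^2$). More importantly, bounding the coagulation term on $(1,\infty)^2$ by $2k\,v_1v_2(v_1+v_2)c(s,v_1)c(s,v_2)$ and ``a factor proportional to $M_1(c(s))M_2(c(s))$'' is circular, since $M_2(c(s))$ is precisely what you are trying to prove finite; to close Gronwall you must keep the truncation on the right-hand side, which requires the convexity inequality $(v_1+v_2)\widetilde{j_R}(v_1,v_2)\le 2\bigl(v_2 j_R(v_1)+v_1 j_R(v_2)\bigr)$ (the paper's \eqref{convex inequality-1}), not the crude bound $\widetilde{\phi_R}\le 2v_1v_2$. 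The paper sidesteps this by proving the uniform second-moment bound at the level of the approximations $c_n$ (\Cref{Tail-1} with $j(v)=v^2$, \Cref{second moment lemma-1}) and passing it to the limit; your route can be repaired along the same lines, but as sketched it does not yet yield \eqref{solsecondmomentbound}.
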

%%%%%%%%%%%%%%%%

To establish uniqueness, akin to \cite{EMRR2005, Stewart1990}, we demonstrate a more generalized continuous dependence result in \Cref{continuousdependence result}. The complexity of the proof stems from the low regularity of weak solutions and the task of identifying an appropriate weight function, and in this instance, it is represented by $w(v) := v^{-\beta} + v$, $v>0$.

%%%%%%%%%%%%%%%%
\begin{remark}
	When $g\equiv 0$,  $c_0 \in L_{-2\beta,2}^1(0,\infty)$ and K satisfies \eqref{coagassum}, we recover from \Cref{uniqueness theorem} the well-posedness of the continuous coagulation equation \eqref{eq:1.1}--\eqref{eq:1.2} already established in \cite{BarikGiriLaurencot2019, CamejoGroplerWarnecke2015, CamejoWarnecke2015}.
\end{remark}
%%%%%%%%%%%%%%%%

%%%%%%%%%%%%%%%%
%%%%%%%%%%%%%%%%
\section{Characteristic curve and mild solution}\label{Characteristics}
%%%%%%%%%%%%%%%%
%%%%%%%%%%%%%%%%

%%%%%%%%%%%%%%%%
%%%%%%%%%%%%%%%%
\subsection {Characteristic curves}
%%%%%%%%%%%%%%%%
%%%%%%%%%%%%%%%%

In this section, first we define the characteristics, and using the characteristic curves, we define the mild solution to \eqref{eq:1.3}--\eqref{eq:1.4}.

\medskip

To begin with, let us observe that \eqref{growthassum} implies that
\begin{align}
	0\le g(t,v)&\leq A v  & \; \text{for}\; (t,v)\in [0,\infty)^2 ,\label{characteristic-1}\\
	|g(t,v_{1})-g(t,v_{2})|& \leq A |v_{1}-v_{2}| & \text{ for }\;  (t,v_{1},v_{2})\in  [0,\infty)^3. \label{characteristic-2}
\end{align}
Property~\eqref{characteristic-2} ensures that, for $(t,v)\in [0,\infty)^2$, the following characteristics equation is well-posed and has a unique solution $Y(\cdot;t,v)$ defined for $s\in [0,\infty)$
\begin{align}\label{characteristic-3}
	\left\{
	\begin{aligned} 
		\partial_s Y(s;t,v)&=g(s,Y(s;t,v)), \\   
		Y(t;t,v)&=v.
	\end{aligned} 
	\right.
\end{align}

Let us now turn to the following fundamental properties of the characteristic curves, which are taken from \cite{ColletGoudon1999, ColletGoudon2000}.

%%%%%%%%%%%%%%%%
\begin{lemma}
	For $(s,t,v)\in [0,\infty )^3$, the characteristics curves $Y$ satisfy
	\begin{align}
	&Y(t;s,Y(s;t,v))=v, \label{characteristic-4}\\
		&J(s;t,v):= \partial_{v}Y(s;t,v) = \exp \left(\int_{s}^{t}b\left(\sigma, Y(\sigma;t,v)\right)d\sigma\right), \nonumber\\ %\label{characteristic-5}\\
		&\partial_t Y(s;t,v) = -g(t,v) 
		\; \exp \left(\int_{s}^{t} b\left(\sigma, Y(\sigma;t,v)\right)d\sigma\right), \nonumber\\ %\label{characteristic-6} \\
&Y(t;s,0)=0 , \nonumber\\ %\label{characteristic-7}\\
&Y(t;s,v)>0 \; \text{for}\; v>0. \nonumber %\label{characteristic-8}
\end{align}
where $b:=-\partial_{v} g$.\\
\end{lemma}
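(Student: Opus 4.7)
The plan is to prove the five properties as standard consequences of ODE theory applied to the Cauchy problem~\eqref{characteristic-3}, exploiting the uniqueness of its solution together with smooth dependence on the initial data, which is granted by the bounds on $\partial_{v_1} g$ and $\partial_{v_1}^2 g$ in~\eqref{growthassum}. I would handle the flow/semigroup identities first, then the derivative formulas via the corresponding variational equations, and finally the positivity assertions by combining the pieces.

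First I would establish~\eqref{characteristic-4}. Fix $(t,v,s)\in[0,\infty)^3$ and set $w:=Y(s;t,v)$. The map $\sigma\mapsto Y(\sigma;t,v)$ solves~\eqref{characteristic-3} and at time $s$ passes through $w$, so by uniqueness of the Cauchy problem with data $(s,w)$ it coincides with $\sigma\mapsto Y(\sigma;s,w)$; evaluating at $\sigma=t$ gives $v=Y(t;s,Y(s;t,v))$. In the same vein, because $g(t,0)=0$ for all $t\ge 0$ by~\eqref{growthassum}, the constant zero function solves~\eqref{characteristic-3} with any data $(s,0)$, and uniqueness yields $Y(t;s,0)=0$ for all $(s,t)$.

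Next I would derive the formulas for $J$ and $\partial_t Y$ by differentiating~\eqref{characteristic-3} with respect to $v$ and $t$, respectively; this is legitimate since $g$ is $C^1$ in the spatial variable with bounded derivative. Setting $J(s;t,v):=\partial_v Y(s;t,v)$ and differentiating the ODE in $v$ gives the linear variational equation
\begin{equation*}
\partial_s J(s;t,v)=\partial_v g(s,Y(s;t,v))\,J(s;t,v)=-b(s,Y(s;t,v))\,J(s;t,v),\qquad J(t;t,v)=1,
\end{equation*}
whose explicit solution is exactly the claimed exponential. Similarly, differentiating in $t$ yields the same linear ODE in $s$ for $\partial_t Y$, and to obtain its initial value I would differentiate the identity $Y(t;t,v)=v$ via the chain rule:
\begin{equation*}
0=\partial_t\bigl[Y(t;t,v)\bigr]=\partial_s Y(s;t,v)\big|_{s=t}+\partial_t Y(t;t,v)=g(t,v)+\partial_t Y(t;t,v),
\end{equation*}
so $\partial_t Y(t;t,v)=-g(t,v)$. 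Integrating the linear ODE from $t$ to $s$ then produces the stated formula.

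Finally, the positivity $Y(t;s,v)>0$ for $v>0$ is a direct corollary: if $Y(t;s,v)=0$ for some $v>0$, then applying~\eqref{characteristic-4} and the already proved identity $Y(s;t,0)=0$ gives $v=Y(s;t,Y(t;s,v))=Y(s;t,0)=0$, a contradiction. There is no genuine obstacle here; the proof is essentially a packaged application of Picard–Lindelöf together with standard smooth dependence on parameters. The only mildly delicate point is the bookkeeping in the derivation of $\partial_t Y$, where one must use $Y(t;t,v)=v$ together with the chain rule to pin down the initial value $-g(t,v)$ before invoking uniqueness for the linear variational ODE.
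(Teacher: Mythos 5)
Your argument is correct and is essentially the same standard ODE-flow proof that the paper delegates to \cite[Appendix~A]{ColletGoudon2000}: uniqueness of the Cauchy problem for the group identity \eqref{characteristic-4} and for $Y(t;s,0)=0$ (using $g(t,0)=0$), the linear variational equation for $J=\partial_v Y$ and for $\partial_t Y$, with the initial value $\partial_t Y(t;t,v)=-g(t,v)$ correctly obtained by differentiating $Y(t;t,v)=v$. One cosmetic remark: your contradiction only excludes $Y(t;s,v)=0$, so to conclude strict positivity you should add that $\sigma\mapsto Y(\sigma;s,v)$ is continuous with $Y(s;s,v)=v>0$, hence if it were ever nonpositive it would vanish at some time, and by uniqueness (again since $g(\cdot,0)=0$) the trajectory would then be identically zero, contradicting $v>0$.
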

%%%%%%%%%%%%%%%%

The proof of this lemma can be seen in \cite[Appendix~A]{ColletGoudon2000}. We next discuss some essential properties of solutions to~\eqref{characteristic-3} which are frequently used in this paper.

%%%%%%%%%%%%%%%%
\begin{lemma}
	For $(t,v)\in  [0,\infty)\times (0,\infty )$ then the characteristic curve $Y(\cdot;t,v)$ solving \eqref{characteristic-3} satisfies the following properties:
	\begin{align}
		&for\; s_1\leq s_2\leq t, & 0< Y(s_2;t,v) \leq Y(s_1;t,v) e^{A(s_2-s_1)}, \label{inequality-8}\\
		& & \lim_{v \rightarrow \infty} Y(0;t,v) = \infty, \nonumber\\ %\label{inequality-9}\\
		& for\; s\geq t, & Y(s;t,v)\leq ve^{A(s-t)}, \label{inequality-10}\\
		& & Y(t;0,v)\leq ve^{At}, \nonumber\\ %\label{inequality-11}\\
		&for\; s\geq 0, & |g(s,Y(t;0,v))|\leq Ave^{At}, \label{inequality-12}\\
		& for\; s\geq t, & Y(s;t,v) \geq v. \label{inequality-13}
	\end{align}
\end{lemma}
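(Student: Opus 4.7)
The plan is to derive every inequality directly from the defining ODE \eqref{characteristic-3}, using the two consequences \eqref{characteristic-1} and \eqref{characteristic-2} of the growth assumption \eqref{growthassum}. The structure I would follow is:

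First, I would establish \emph{monotonicity and positivity} of $Y(\cdot;t,v)$. Because $g\ge 0$, the identity $\partial_s Y = g(s,Y)\ge 0$ shows that $s\mapsto Y(s;t,v)$ is non-decreasing, so $Y(s;t,v)\ge Y(t;t,v)=v$ whenever $s\ge t$. This immediately gives \eqref{inequality-13}. For positivity, since $g(s,0)=0$ by \eqref{growthassum}, the zero function is a solution of the characteristic ODE; the Lipschitz bound \eqref{characteristic-2} guarantees uniqueness, so $v>0$ forces $Y(s;t,v)>0$ for every $s\ge 0$.

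Next, I would prove the core exponential comparison. For $v>0$ the function $s\mapsto \log Y(s;t,v)$ is well-defined, and the ODE together with \eqref{characteristic-1} gives
\begin{equation*}
\partial_s \log Y(s;t,v) = \frac{g(s,Y(s;t,v))}{Y(s;t,v)} \le A.
\end{equation*}
Integrating this inequality on $[s_1,s_2]$ yields $Y(s_2;t,v)\le Y(s_1;t,v)\,e^{A(s_2-s_1)}$ for every $0\le s_1\le s_2$. Specializing to $s_2=t$ and $s_1=s$ gives \eqref{inequality-8}; specializing to $s_1=t$ and $s_2=s\ge t$ gives \eqref{inequality-10}; and taking $t=0$ in \eqref{inequality-10} produces $Y(t;0,v)\le v e^{At}$. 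Feeding this last bound into \eqref{characteristic-1} at the point $Y(t;0,v)$ yields \eqref{inequality-12}.

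Finally, for the limit $\lim_{v\to\infty} Y(0;t,v)=\infty$, I would exploit the inversion identity \eqref{characteristic-4}: setting $s=0$ there, we get $Y(t;0,Y(0;t,v))=v$. Applying the just-established bound $Y(t;0,w)\le w e^{At}$ with $w:=Y(0;t,v)$ gives $v\le Y(0;t,v)\,e^{At}$, i.e.\ $Y(0;t,v)\ge v e^{-At}$, which clearly tends to $\infty$ as $v\to\infty$.

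There is no serious obstacle here; the only delicate point is that the $\log$-derivative argument requires $Y>0$, which is why the positivity step has to be done first via uniqueness of the zero solution. Everything else is a routine Gr\"onwall-type integration combined with the inversion property \eqref{characteristic-4} from the previous lemma.
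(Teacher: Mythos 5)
Your proposal is correct, and it is essentially a self-contained version of what the paper outsources: the paper proves \eqref{inequality-13} exactly as you do (monotonicity of $s\mapsto Y(s;t,v)$ from $g\ge 0$ and $Y(t;t,v)=v$), but for \eqref{inequality-8}--\eqref{inequality-12} it simply cites Lemma~1 of Collet--Goudon rather than giving an argument. Your route — positivity via uniqueness of the trivial solution (using \eqref{characteristic-2} and $g(t,0)=0$), the bound $\partial_s\log Y\le A$ from \eqref{characteristic-1}, and the inversion identity \eqref{characteristic-4} to get $Y(0;t,v)\ge v e^{-At}\to\infty$ — is the standard Gr\"onwall-type derivation underlying that reference, so nothing is genuinely different in substance; the benefit of your write-up is that the paper's statement becomes independent of the citation. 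One cosmetic slip: after integrating $\partial_s\log Y\le A$ over $[s_1,s_2]$ you already have \eqref{inequality-8} in full generality for $s_1\le s_2\le t$, so the phrase ``specializing to $s_2=t$ and $s_1=s$'' is not the right description (that would only give a special case); the inequality you actually proved is the one claimed, so no gap results. Note also that one could avoid the logarithm altogether by applying Gr\"onwall's inequality to $Y(s_2)\le Y(s_1)+A\int_{s_1}^{s_2}Y(\sigma)\,d\sigma$, though your positivity step makes the $\log$-argument legitimate as written.
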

%%%%%%%%%%%%%%%%

\begin{proof}
The detailed justification of the properties~\eqref{inequality-8}-\eqref{inequality-12}   can be found in \cite [Lemma~1]{ColletGoudon2000}. Inequality~\eqref{inequality-13} follows from~\eqref{characteristic-3} and the non-negativity of $g$ on $[0,\infty)^2$.	
\end{proof}

%%%%%%%%%%%%%%%%
%%%%%%%%%%%%%%%%
\subsection{Mild solution}
%%%%%%%%%%%%%%%%
%%%%%%%%%%%%%%%%

Now, by using the characteristic curves, we define the notion of \emph {mild solution} to  \eqref{eq:1.3}--\eqref{eq:1.4}.

%%%%%%%%%%%%%%%%
\begin{definition}\label{defin:mild}
	Let $K$ and $g$ be two functions satisfying~\eqref{coagassum}--\eqref{growthassum} and $c_0$ be an initial condition satisfying~\eqref{initialassum}. For $T>0$, a mild solution to~\eqref{eq:1.3}--\eqref{eq:1.4} on $[0,T)$ is a non-negative function $c\in C([0,T];L^1(0,\infty))$ with $Q(c)\in L^1((0,T)\times (0,\infty))$ which satisfies the mild formulation to \eqref{eq:1.3}--\eqref{eq:1.4}
	\begin{equation*}%\label{mild formulation}
		c(t,v)=c_0(Y(0;t,v))J(0;t,v) + \int_{0}^{t} Q(c)(s,Y(s;t,v)) J(s;t,v)ds,
	\end{equation*}
	for all $t\in [0,T]$ and $v\in (0,\infty)$.
	
	We shall say that a mild solution to~\eqref{eq:1.3}--\eqref{eq:1.4} is global if it is a mild solution to~\eqref{eq:1.3}--\eqref{eq:1.4} on $[0,T]$ for all $T>0$.
\end{definition}
%%%%%%%%%%%%%%%%

In order to prove \Cref{main theorem}, it is necessary to recover the notion of weak solution from that of mild solution. In this connection, we report a relation between mild and weak solutions, see \cite[Proposition~1]{ColletGoudon2000} and \cite[Lemma~2.3]{HingantSepulveda2015}.

%%%%%%%%%%%%%%%%
\begin{proposition}\label{relation of weak and mild solution}
	Let $T>0$ and assume that \eqref{coagassum}--\eqref{initialassum} hold true. The following statements are equivalent:
	\begin{enumerate}[label=(\roman*)]
		\item $c\in C\left([0,T];L^1((0,\infty))\right)$ is a weak solution in the sense of \Cref{definitionofsolution}.
		\item $c$ is a mild solution in the sense of \Cref{defin:mild}.
	\end{enumerate}
\end{proposition}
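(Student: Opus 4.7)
The overall strategy is to exploit the characteristic flow $Y$ and Jacobian $J$ from \Cref{Characteristics} to pass between the two formulations. Three identities drive everything: (a) the reciprocal Jacobian relation $J(s;t,v)J(t;s,Y(s;t,v))=1$, obtained by differentiating $Y(t;s,Y(s;t,v))=v$ in $v$; (b) the derivative $\partial_s Y(t;s,w)=-g(s,w)J(t;s,w)$, which is the already-stated formula $\partial_t Y(s;t,v)=-g(t,v)J(s;t,v)$ after a relabelling; and (c) the resulting transport identity $(\partial_s+g\partial_w)[Y(t;s,w)]\equiv 0$, so that $w\mapsto Y(t;s,w)$ is constant along characteristics in $(s,w)$.

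\smallskip

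\emph{Direction (ii)$\Rightarrow$(i).} I multiply the mild identity by a test function $\phi\in W^{1,\infty}(0,\infty)$, integrate over $v\in(0,\infty)$, and in each term on the right perform the change of variables $v\mapsto w:=Y(s;t,v)$. By reciprocity the Jacobian in the mild formula cancels and produces
\begin{equation*}
\int_0^\infty c(t,v)\phi(v)\,dv=\int_0^\infty c_0(w)\phi(Y(t;0,w))\,dw+\int_0^t\!\!\int_0^\infty Q(c)(s,w)\phi(Y(t;s,w))\,dw\,ds.
\end{equation*}
Differentiating in $t$, the outer boundary term of the double integral gives $\int Q(c)(t,w)\phi(w)\,dw$, which symmetrizes by Fubini into the coagulation bilinear form with integrand $\tfrac{1}{2}\tilde\phi(v_1,v_2)K(v_1,v_2)c(t,v_1)c(t,v_2)$. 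The remaining contributions, obtained by differentiating $\phi(Y(t;\cdot,w))$ in $t$, produce $\phi'(Y(t;s,w))g(t,Y(t;s,w))$; reversing the change of variables $w\mapsto v=Y(t;s,w)$ and invoking the mild formula once more to reconstruct $c(t,v)$ inside the $s$-integral, these collapse to $\int c(t,v)g(t,v)\phi'(v)\,dv$. Integrating from $0$ to $t$ gives (\ref{weak formulation}).

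\smallskip

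\emph{Direction (i)$\Rightarrow$(ii).} I first promote (\ref{weak formulation}) to time-dependent test functions $\varphi\in W^{1,\infty}((0,T)\times(0,\infty))$ by approximating $\varphi(\cdot,v)$ by step functions in $s$, applying (\ref{weak formulation}) on each subinterval, and passing to the limit using the $C([0,T];L^1)$-regularity of $c$ together with the bounds on $K$ and $g$. For any fixed $t\in(0,T]$ and arbitrary $\psi\in C_c^1(0,\infty)$, I then insert $\varphi(s,w):=\psi(Y(t;s,w))$. By identity (c) above, the transport term $\partial_s\varphi+g\partial_w\varphi$ vanishes identically, and performing the change of variables $v:=Y(t;s,w)$ (and $v:=Y(t;0,w)$ in the initial-data term) reinstates the Jacobians to yield
\begin{equation*}
\int_0^\infty\Big[c(t,v)-c_0(Y(0;t,v))J(0;t,v)-\int_0^t Q(c)(s,Y(s;t,v))J(s;t,v)\,ds\Big]\psi(v)\,dv=0
\end{equation*}
for every $\psi\in C_c^1(0,\infty)$. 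The bracketed function therefore vanishes a.e.\ in $v$, and continuity of $c(t,\cdot)$ in $t$ promotes this to the pointwise mild identity of \Cref{defin:mild}.

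\smallskip

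\emph{Main obstacle.} The delicate step is the extension to time-dependent test functions used in the weak-to-mild direction, since \Cref{definitionofsolution} only provides (\ref{weak formulation}) for $\phi=\phi(v)$ while $c$ is only weakly continuous in $L^1_{-\beta,1}$ and $K$ is singular at the origin. All changes of variables and limit passages must be controlled by the weighted-moment bounds $\int(v^{-\beta}+v)c(t,v)\,dv$ together with the pointwise estimates (\ref{inequality-8})--(\ref{inequality-13}) on $Y$ and $J$; dominated convergence then transports each identity safely through the approximation steps.
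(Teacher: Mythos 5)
Your argument is correct and is essentially the same characteristics--duality proof that the paper relies on: the paper does not prove this proposition itself but refers to \cite{ColletGoudon2000} and \cite{HingantSepulveda2015}, whose proofs follow exactly your two steps (change of variables along the flow $Y$ with the Jacobian $J$ for mild $\Rightarrow$ weak, and time-dependent test functions $\psi(Y(t;s,\cdot))$ annihilating the transport term for weak $\Rightarrow$ mild). The only point worth making explicit is the domination needed to differentiate under the integral in the mild $\Rightarrow$ weak direction, which is cleanest if one first takes $\phi\in C_c^1(0,\infty)$ (so that, by \eqref{inequality-13}, $Y(t;s,w)\ge w$ confines the integrand to a bounded set where $w|Q(c)(s,w)|$ is controlled by $|Q(c)(s,w)|$) and then extends to $\phi\in W^{1,\infty}(0,\infty)$ using the weighted moment bounds, as your closing paragraph indicates.
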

%%%%%%%%%%%%%%%%

%%%%%%%%%%%%%%%%
%%%%%%%%%%%%%%%%
\section{global existence of weak solutions} \label{Global existence of weak solutions}
%%%%%%%%%%%%%%%%
%%%%%%%%%%%%%%%%

Let $K$, $g$ and $c_0$ satisfy \eqref{coagassum}--\eqref{initialassum} and consider the following truncated problem
\begin{equation}\label{truncated-1}
	\partial_t c_n  +\partial_{v}(gc_n) = Q_n(c_n) \quad \text{for} \; (t,v) \in (0,\infty)^2,
\end{equation}
\begin{equation}\label{truncated-2}
	c_n(0,v)=c_{0}^{n}(v) \ge 0, 
\end{equation}
where, for $(v_1,v_2)\in (0,\infty)^2$,
\begin{align}
	 Q_n(c)(v_1) &:=\frac{1}{2} \int_{0}^{v_{1}} K_n(v_{1}-v_{2},v_{2}) c(v_{1}-v_{2}) c(v_{2}) dv_{2} \nonumber \\
	&\quad	-\int_{0}^{\infty} K_n(v_{1},v_{2}) c(v_{1}) c(t,v_{2}) dv_{2},\nonumber 
\end{align}
and 
\begin{equation*}
		K_n(v_{1},v_{2}):=K(v_{1},v_{2}) \chi_{(1/n,n)}(v_{1}) \chi_{(1/n,n)}(v_{2}), \quad c_{0}^{n}(v_{1}):=c_{0}(v_{1})\chi_{(0,n)}(v_{1}).\nonumber
\end{equation*}
Here, $\chi_E$ denotes the characteristic function of the set $E$; that is, $\chi_E(v)=1$ if $v\in E$ and $\chi_E(v) = 0$ if $v\not\in E$. Observe that the definition of $K_n$ and~\eqref{coagassum} guarantee that $K_n\in L^\infty\big((0,\infty)^2\big)$ and put
\begin{equation*}
	\beta_{n}:=\max_{(v_{1},v_{2})\in (0,\infty)^2}K_{n}(v_{1},v_{2}).
\end{equation*}

Before proving the existence of a mild solution to  \eqref{truncated-1}--\eqref{truncated-2}, we collect some properties of the coagulation operator $Q_{n}$. For $c_{1}\in L^1(0,\infty)$ and $c_{2}\in L^{1}(0,\infty)$, we have
\begin{align}
	\|Q_{n}(c_{1})\|_{L^{1}(0,\infty)} &\leq \frac{3}{2} \beta_{n} \|c_{1}\|_{L^{1}(0,\infty)}^{2}, \label{coagapriori1}\\
	\|Q_{n}(c_{1})-Q_{n}(c_{2})\|_{L^{1}(0,\infty)} & \leq 3 \beta_{n} \left(\|c_{1}\|_{L^{1}(0,\infty)}+\|c_{2}\|_{L^{1}(0,\infty)}\right) \|c_{1}-c_{2}\|_{L^{1}(0,\infty)}. \label{coagapriori2}
\end{align}
Clearly, \eqref{coagapriori1} confirms that, for $c_{1}\in L_+^1(0,\infty)$, $Q_{n}(c_{1})\in L^1(0,\infty)$ with
\begin{equation} \label{negative contribution of Q}
	\int_{0}^{\infty} Q_{n}(c_{1})(v) dv \leq 0.
\end{equation}
Additionally, if $c_{1}\in L_1^{1}(0,\infty)$ then
\begin{equation}
	\int_{0}^{\infty} vQ_{n}(c_{1})(v) dv =0. \label{first moment vanishing} 
\end{equation}

%%%%%%%%%%%%%%%%
\begin{theorem}\label{fixed point theorem-2}
	Let $c_0$ be a non-negative function in $L_{-2\beta,1}^{1}(0,\infty)$. Then the initial value problem~\eqref{truncated-1}--\eqref{truncated-2} has a unique global mild solution in the sense of \Cref{defin:mild}. Moreover, it satisfies
	\begin{equation} \label{zeroth moment}
		\begin{aligned}
			0\leq & \int_{0}^{\infty} c_{n}(t,v)dv \leq \int_{0}^{\infty} c_0(v) dv, \\
			0\leq & \int_{0}^{\infty} v c_{n}(t,v)dv \leq e^{At} \int_{0}^{\infty} v c_0(v) dv,
		\end{aligned}
	\end{equation}
	and
	\begin{equation}
		\int_{0}^{\infty}vc_{n}(t,v) dv=\int_{0}^{\infty}vc_{0}^{n}(v) dv + \int_{0}^{t}\int_{0}^{\infty} g(s,v) c_{n}(s,v)dv ds, \label{rate of first moment}
	\end{equation}
	for all $t\in [0,\infty)$. In addition, $c_n$ is also a global weak solution to~\eqref{truncated-1}--\eqref{truncated-2} and thus satisfies 
	\begin{align} \label{truncated weak formulation}
		\int_{0}^{\infty} c_{n}(t,v)\phi(v)dv  &=   	\int_{0}^{\infty} c_{0}^{n}(v)\phi(v)dv   
		+\int_{0}^{t}\int_{0}^{\infty} \phi^{\prime}(v)g(s,v)c_{n}(s,v)dvds \nonumber \\
		&+\frac{1}{2}\int_{0}^{t}\int_{0}^{\infty} \int_{0}^{\infty}\tilde{\phi}(v_{1},v_{2}) K_{n}(v_{1},v_{2})c_{n}(s,v_{2}) c_{n}(s,v_{1})dv_{1} dv_{2}ds.
	\end{align}
	for all $t\in [0,\infty)$ and $\phi\in W^{1,\infty}(0,\infty)$.
\end{theorem}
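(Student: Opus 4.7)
The plan is to construct $c_n$ as a fixed point of the mild formulation on a short time interval, extend it globally via a uniform $L^1$ a priori bound, and then read off the moment identities and the weak formulation from \Cref{relation of weak and mild solution}.

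\emph{Local existence via contraction.} For $R,\tau>0$, I would introduce the operator
\begin{equation*}
\mathcal{T}[c](t,v) := c_0^n(Y(0;t,v)) J(0;t,v) + \int_0^t Q_n(c)(s,Y(s;t,v)) J(s;t,v) \, ds
\end{equation*}
on the closed ball $\mathcal{B}_{\tau,R} := \{c \in C([0,\tau]; L^1(0,\infty)) : \sup_{[0,\tau]} \|c(t)\|_{L^1} \leq R\}$ with $R := 2\|c_0\|_{L^1}$. Using the change of variable $w = Y(s;t,v)$ with $dw = J(s;t,v)\,dv$ together with the coagulation estimates \eqref{coagapriori1}--\eqref{coagapriori2}, I obtain
\begin{equation*}
\|\mathcal{T}[c](t)\|_{L^1} \leq \|c_0\|_{L^1} + \tfrac{3}{2}\beta_n \tau R^2, \qquad \|\mathcal{T}[c_1] - \mathcal{T}[c_2]\|_{\tau} \leq 6\beta_n \tau R \, \|c_1 - c_2\|_{\tau},
\end{equation*}
where $\|\cdot\|_\tau$ denotes the norm of $C([0,\tau]; L^1(0,\infty))$. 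Choosing $\tau$ sufficiently small in terms of $\beta_n$ and $\|c_0\|_{L^1}$ turns $\mathcal{T}$ into a strict contraction of $\mathcal{B}_{\tau,R}$ into itself, so Banach's fixed-point theorem provides a unique mild solution $c_n$ on $[0,\tau]$.

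\emph{Non-negativity and global extension.} To ensure $c_n\ge 0$, I would recast the mild formulation in its Duhamel form obtained by absorbing the loss term into an integrating factor,
\begin{equation*}
c_n(t,v) = c_0^n(Y(0;t,v)) J(0;t,v) e^{-\Lambda_n(t,0;v)} + \int_0^t G_n(c_n)(s,Y(s;t,v)) J(s;t,v) e^{-\Lambda_n(t,s;v)} \, ds,
\end{equation*}
where $G_n(c)(v_1):=\tfrac12\int_0^{v_1} K_n(v_1-v_2,v_2) c(v_1-v_2) c(v_2)\,dv_2$ and $\Lambda_n(t,s;v):=\int_s^t\int_0^\infty K_n(Y(\sigma;t,v),w)\,c_n(\sigma,w)\,dw\,d\sigma$. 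Picard iteration in this formulation starting from the non-negative seed $c_0^n(Y(0;t,\cdot))J(0;t,\cdot)$ stays in the non-negative cone and, by uniqueness, converges to $c_n$. With $c_n\ge 0$ secured, integrating the mild formulation over $v$ and using the change of variables together with \eqref{negative contribution of Q} yields $\|c_n(t)\|_{L^1}\le\|c_0\|_{L^1}$ on the existence interval. Since the local existence time $\tau$ depends only on $\beta_n$ and $\|c_0\|_{L^1}$, this uniform bound allows the standard continuation argument to extend $c_n$ to $[0,\infty)$.

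\emph{Moment identities and weak formulation.} By \Cref{relation of weak and mild solution}, $c_n$ is a weak solution, so \eqref{truncated weak formulation} holds for every $\phi \in W^{1,\infty}(0,\infty)$. Choosing $\phi \equiv 1$ and invoking \eqref{negative contribution of Q} gives the zeroth-moment estimate in \eqref{zeroth moment}. Inserting the truncation $\phi_k(v) := \min\{v,k\}$, which lies in $W^{1,\infty}(0,\infty)$, applying \eqref{first moment vanishing} (whose use is legitimate after dominated convergence since $c_n(t)$ is compactly supported due to the cut-offs in $K_n$ and $c_0^n$), and letting $k\to\infty$ produces identity \eqref{rate of first moment}; combining it with $g(s,v)\leq Av$ from \eqref{characteristic-1} and Gronwall's lemma yields the first-moment estimate in \eqref{zeroth moment}. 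The main obstacle is the systematic use of the Jacobian identity $dw=J(s;t,v)\,dv$ from the preceding lemma to convert estimates on $\mathcal{T}[c]$ into the bare $L^1$ estimates on $Q_n(c)$; once this bookkeeping is arranged, the boundedness of $K_n$ makes both the contraction and the continuation arguments routine.
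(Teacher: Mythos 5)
Your proposal is correct and follows the same overall skeleton as the paper (mild formulation along characteristics, Banach fixed point, continuation via the conserved $L^1$ bound, then the weak formulation through \Cref{relation of weak and mild solution}), but it handles the two delicate points differently. The paper builds non-negativity and the moment bounds directly into the fixed-point argument: the map $\mathcal{T}_n$ contains an extra damping term $\kappa_n u$ with $\kappa_n=\beta_n M_0(c_0)$, chosen so that $Q_n(u)+\kappa_n u\ge 0$ on the invariant set $P_n$, which simultaneously encodes $M_0(c_n(t))\le M_0(c_0)$ and $M_1(c_n(t))\le M_1(c_0)e^{At}$; the fixed point is then automatically non-negative with the bounds \eqref{zeroth moment} for free. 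You instead contract on a plain ball of $C([0,\tau];L^1)$ and recover non-negativity a posteriori by rewriting the mild solution in Duhamel form with the loss term absorbed into the integrating factor $e^{-\Lambda_n}$ and running a Picard iteration with frozen loss rate; this is a legitimate classical device, though it costs you an extra verification the paper avoids, namely that the already-constructed $c_n$ is indeed a fixed point of the frozen-loss map (an ODE computation along each characteristic using the cocycle identities $Y(s;\sigma,Y(\sigma;t,v))=Y(s;t,v)$ and the multiplicativity of $J$, with the usual null-set bookkeeping), plus a second smallness-of-time argument. Your derivation of the moment statements is then the mirror image of the paper's: you get \eqref{zeroth moment} from the weak formulation (with $\phi\equiv 1$, then $\phi_k=\min\{v,k\}$ and Gronwall), whereas the paper gets the bounds from membership in $P_n$ and only uses the truncation $\eta_R$ for the identity \eqref{rate of first moment}. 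One caveat on your justification of \eqref{rate of first moment}: the support of $c_n(t)$ is not confined by the cut-offs alone, since the growth term transports it outward, so it is contained in $(0,2ne^{At})$ rather than $(0,2n)$ by \eqref{inequality-10}; the claim of compact support for each finite $t$ remains true but needs this small argument, and it can be bypassed entirely by noting $\widetilde{\phi_k}\le 0$, so that Gronwall applied to $\int\phi_k c_n\,dv$ followed by Fatou gives the a priori finiteness of $M_1(c_n(t))$ needed for the dominated-convergence passage $k\to\infty$.
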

%%%%%%%%%%%%%%%%

\begin{proof}
For $n\ge 2$, we set
\begin{equation}\label{smallt_{0}}
	t_{n} := \frac{1}{2\left[ \kappa_n + 6\beta_{n} M_0(c_0) \right]}, \quad \kappa_n :=  \beta_{n} M_0(c_0),
\end{equation}	
and define the set
\begin{equation*}
	P_n := \left\{ u\in C([0,t_{n}];L_+^1(0,\infty))\ :\ \sup_{t\in [0,t_n]}\left\{M_0(u(t))\right\}\leq M_0(c_0),\ M_1(u(t))\leq M_1(c_0) e^{At}\right\},
\end{equation*}
recalling that $A$ is the Lipschitz constant of $g$, see~\eqref{growthassum}, and the moments $M_m$ are defined right after the statement of \Cref{Assumption}. Clearly, $P_n$ is a complete metric space for the distance
\begin{equation*}
	d_{P_n}(u_1,u_2) := \sup_{t\in [0,t_n]}\left\{ \|(u_1-u_2)(t)\|_{L^1(0,\infty)}\right\}, \quad (u_1,u_2)\in P_n^2.
\end{equation*} 
induced by the norm of $C([0,t_{n}];L^1(0,\infty))$.

Now, for $u\in P_n$, we define the map $\mathcal{T}_n(u)$ by
\begin{align*}
	\mathcal{T}_n(u)(t,v):=& c_{0}^{n}(Y(0;t,v))J(0;t,v)  e^{-\kappa_n t}  \\ 
	&\;\;+ \int_{0}^{t} \left(Q_{n}(u)(s,Y(s;t,v)) + \kappa_n u(s,Y(s;t,v))\right) J(s;t,v)e^{-\kappa_n (t-s)} ds 
\end{align*}
for $(t,v)\in [0,t_n]\times (0,\infty)$ and aim at showing the existence of a fixed point of $\mathcal{T}_n$ in $P_n$. Since $\mathcal{T}_n(u)$ is a mild solution to
\begin{equation*}
	\begin{cases}~& {	\partial_{t} U + \partial_{v_{1}}(g U) +\kappa_n  U = 	Q_{n}(u) + \kappa_n   u} \quad \text{for} \; (t,v) \in (0,t_n)\times (0,\infty),\\ 
		~&	U(0,v)=c_{0}^{n}(v) ,
	~\end{cases}  
\end{equation*}
a fixed point of $\mathcal{T}_n$ is clearly a mild solution to~\eqref{truncated-1}--\eqref{truncated-2} on $[0,t_n]$. 

Let us now check that $\mathcal{T}_n$ is a strict contraction on $P_n$. Let $u\in P_n$ and $t\in [0,t_n]$. We first note that, since $u(t)\ge 0$ and $M_0(u(t))\le M_0(c_0)$, we deduce from~\eqref{smallt_{0}} that
\begin{align}
	Q_n(u)(t,v) + \kappa_n  u(t,v) & \ge \kappa_n  u(t,v) - \int_0^\infty K_n(v,v_1) u(t,v) u(t,v_1) dv_1 \nonumber \\
	& \ge \left[ \beta_n M_0(c_0)  - \beta_n M_0(u(t)) \right] u(t,v) \ge 0. \label{global nonnegative-1M}
\end{align} 
Combining this lower bound with the non-negativity of $c_0^n$ and $J$ gives
\begin{equation}
	\mathcal{T}_n(u)\geq 0 \quad \text{for}\; u \in P_n. \label{global nonnegative-2M}
\end{equation}
Then we compute the norm of $\mathcal{T}_n(u)(t)$ in $L^{1}(0,\infty)$ for all $t\in [0,t_{n}]$.
\begin{align*}
	\int_{0}^{\infty}	\mathcal{T}_n(u)(t,v) dv&= 	  e^{-\kappa_n t}	\int_{0}^{\infty} c_{0}^{n}(v)dv  \\
	\quad & + \int_{0}^{t} 	\int_{0}^{\infty}\left(Q_{n}(u)(s,v) + \kappa_n u(s,v)\right) e^{-\kappa_n (t-s)} dvds .  
\end{align*}
By using~\eqref{negative contribution of Q}, we end up with
\begin{equation*}
	M_0\big(\mathcal{T}_n(u)(t)\big) \leq e^{-\kappa_n t} M_0(c_0) + \kappa_n  M_0(c_0) \left( \frac{1-e^{-\kappa_n t}}{\kappa_n }\right)= M_0(c_0)
\end{equation*}
when $\kappa_n>0$ and $M_0\big(\mathcal{T}_n(u)(t)\big) \leq M_0(c_0)$ when $\kappa_n=0$. Therefore, we have established
\begin{equation}
	M_0\big(\mathcal{T}_n(u)(t)\big) \leq M_0(c_0).\label{tau-1}
\end{equation}
We next compute the first moment of $\mathcal{T}_n(u)$ for $u\in P_n$ and find, for $t\in [0,t_n]$, 
\begin{align*}
	\int_{0}^{\infty}	v\mathcal{T}_n(u)(t,v) dv&= e^{-\kappa_n t}	\int_{0}^{\infty} Y(t,0,v) c_{0}^{n}(v)dv  \\
	&\;\; + \int_{0}^{t}\int_{0}^{\infty} \left[Y(t;s,v)\left(Q_{n}(u)(s,v) + \kappa_n u(s,v)\right) e^{-\kappa_n (t-s)}\right] dvds .  
\end{align*}
Owing to~\eqref{inequality-10}, \eqref{first moment vanishing}, and~\eqref{global nonnegative-1M}, we deduce that
\begin{equation*}
	e^{-\kappa_n t} \int_{0}^{\infty} Y(t,0,v) c_{0}^{n}(v)dv \le M_1(c_0) e^{(A-\kappa_n)t}
\end{equation*}
and
\begin{align*}
	& \int_{0}^{\infty} \left[Y(t;s,v)\left(Q_{n}(u)(s,v) + \kappa_n u(s,v)\right) e^{-\kappa_n (t-s)}\right] dv \\
	& \quad \le \int_{0}^{\infty} \left[v\left(Q_{n}(u)(s,v) + \kappa_n u(s,v)\right) e^{(A-\kappa_n )(t-s)}\right] dv \\
	& \quad = \kappa_n M_1(u(s)) e^{(A-\kappa_n )(t-s)} \le \kappa_n M_1(c_0) e^{At - \kappa_n(t-s)}.
\end{align*}
Consequently, 
\begin{equation}
	\int_{0}^{\infty}	v\mathcal{T}_n(u)(t,v) dv \le M_1(c_0) e^{(A-\kappa_n)t} + \kappa_n  M_1(c_0) \int_0^t e^{At - \kappa_n(t-s)} ds = M_1(c_0) e^{At}. \label{tau-2} 
\end{equation}

Therefore, \eqref{global nonnegative-2M}, \eqref{tau-1}, and \eqref{tau-2} imply that $P_n$ is an invariant set for the map $\mathcal{T}_n$.
	
Now we prove that $\mathcal{T}_n$ is a contraction on $P_n$  with respect to the distance $d_{P_n}$. Consider $(u_1,u_2)\in P_n^2$ and $t\in [0,t_n]$. From~\eqref{coagapriori2}, we obtain
\begin{align*}
	&\|\left(\mathcal{T}_n(u_1) -\mathcal{T}_n(u_2)\right)(t)\|_{L^1(0,\infty)} \\
	&\leq \int_{0}^{t} \left[ \|\left( Q_{n}(u_1)-Q_{n}(u_2)\right)(s)\|_{L^1(0,\infty)} + \kappa_n \| (u_1-u_2)(s)\|_{L^1(0,\infty)}\right] e^{-\kappa_n (t-s)} ds, \\
	& \leq \int_{0}^{t} \left[ 3 \beta_n \left( M_0(u_1(s)) + M_0(u_2(s))\right) + \kappa_n \right] ds\ d_{P_n}(u_1,u_2) \\
	&\leq t_{n} \left( \kappa_n  + 6\beta_{n} M_0(c_0) \right) d_{P_n}(u_1,u_2). 
\end{align*} 
Hence, the choice~\eqref{smallt_{0}} of $t_n$ leads us to
\begin{equation*}
	d_{P_n}\left( \mathcal{T}_n(u_1),\mathcal{T}_n(u_2) \right)\leq \frac{1}{2} d_{P_n}(u_1,u_2),
\end{equation*} 
which holds for all $(u_1,u_2) \in P_n^2$. Therefore, $\mathcal{T}_n$ is a strict contraction on $P_n$. Hence, by Banach contraction theorem, there is a unique fixed point $c_{n}\in P_n$ of $\mathcal{T}_n$ and $c_n$ is the sought-for mild solution to~\eqref{truncated-1}--\eqref{truncated-2} on $[0,t_n]$, as already pointed out. Moreover, from the definition of $P_n$, we have
\begin{equation} 
	0\leq M_0(c_{n}(t)) \leq M_0(c_0), \quad M_1(c_{n}(t))\leq M_1(c_0) e^{At} \label{finitenessofM_{1}},
\end{equation}
for $0\leq t\leq t_{n}$. 
	
Now, since $M_0(c_n(t_n))\le M_0(c_0)$ by~\eqref{finitenessofM_{1}}, we can repeat the same procedure as above with $c_{n}(t_{n})$ as initial condition and thereby extend $c_n$ uniquely on the time interval $[0,2t_{n}]$, showing as well that the extension of $c_n$ to $[t_{n},2t_{n}]$ also satisfies~\eqref{finitenessofM_{1}} for $[t_{n},2t_{n}]$. Iterating this argument provides the existence and uniqueness of a global mild solution $c_n$ to~\eqref{truncated-1}--\eqref{truncated-2} in the sense of \Cref{defin:mild}, which satisfies~\eqref{finitenessofM_{1}} for all $t\ge 0$. In particular, $c_n$ satisfies~\eqref{zeroth moment} and we are left with checking~\eqref{rate of first moment}.
	
Let $t\ge 0$ and $\eta_{R}\in C^{1}_c\left((0,\infty)\right)$ with
\begin{align*}
	\eta_{R}(v)=v \;\text{ for }\quad 0\leq v \leq R,\quad \eta_{R}(v)=0\; \text{ for }\;v\geq 2R 
\end{align*}
and 
\begin{align*}
	0\leq \eta_{R}(v)\leq 2 v, \quad |\eta^{\prime}_{R}(v)|\leq 2.
\end{align*}
Inserting $\eta_{R}$ in \eqref{truncated weak formulation}, we obtain
\begin{align}
	\int_{0}^{\infty}c_{n}(t,v)\eta_{R}(v) dv = \int_{0}^{\infty}\eta_{R}(v) c_{0}^{n}(v) dv +\int_{0}^{t} \int_{0}^{\infty} g(s,v) c_{n}(s,v) \eta_{R}^{\prime}(v) dvds \nonumber \\
	\quad + \int_{0}^{t} \int_{0}^{\infty} \int_0^\infty \widetilde{\eta_{R}}(v_{1},v_2) K_{n}(v_1,v_2) c_{n}(s,v_{1}) c_{n}(s,v_{2}) dv_{2}dv_{1}ds. \nonumber  
\end{align}
Since $c_{n}(t)$ and $c_{0}^{n}$ belong to $L_1^1(0,\infty)$ and 
\begin{equation*}
	\widetilde{\eta_{R}}(v_1,v_2)=0, \quad v_1+v_2\in (0,R), \qquad \big|\widetilde{\eta_{R}}(v_1,v_2)\big| \le 4 (v_1+v_2), \quad (v_1,v_2)\in (0,\infty)^2,
\end{equation*}
it follows from the boundedness of $K_n$ and~\eqref{growthassum} that we may take the limit $R\rightarrow \infty$ and deduce from Lebesgue's dominated convergence theorem that
\begin{equation*}
		\int_{0}^{\infty} v c_{n}(t,v) dv = \int_{0}^{\infty} v c_{0}^{n}(v) dv +\int_{0}^{t} \int_{0}^{\infty} g(s,v) c_{n}(s,v) dvds, 
\end{equation*}
hence~\eqref{rate of first moment}. Finally, $c_n$ is also a global weak solution to~\eqref{truncated-1}--\eqref{truncated-2} by \Cref{relation of weak and mild solution} and the proof of \Cref{fixed point theorem-2} is complete.
\end{proof}

Next, we investigate the behavior of the mild solution $c_{n}$ to~\eqref{truncated-1}--\eqref{truncated-2} for small sizes in the upcoming \Cref{-2 beta moment bound-0}. 

%%%%%%%%%%%%%%%%
\begin{lemma}\label{-2 beta moment bound-0} 
The mild solution $c_{n}$ to~\eqref{truncated-1}--\eqref{truncated-2} belongs to $L^\infty(0,\infty;L_{-2\beta}^1(0,\infty))$ and satisfies
\begin{equation}
	M_{-2\beta}(c_{n}(t))=\int_{0}^{\infty} v^{-2\beta} c_{n}(t,v) dv \leq 	M_{-2\beta}(c_{0}), \quad t\ge 0.\label{-2 beta moment bound}
\end{equation}
\end{lemma}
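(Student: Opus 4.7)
The plan is to test the weak formulation~\eqref{truncated weak formulation} satisfied by $c_n$ against a Lipschitz approximation of the singular weight $v\mapsto v^{-2\beta}$, which is not itself admissible, and then to exploit the fact that both the growth and coagulation contributions carry a favourable sign when the test function is non-negative and non-increasing.

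For $\varepsilon\in(0,1)$, I would introduce the truncated weight
\[
\phi_\varepsilon(v):=\min\bigl\{v^{-2\beta},\varepsilon^{-2\beta}\bigr\},\quad v>0,
\]
which lies in $W^{1,\infty}(0,\infty)$ since it is bounded by $\varepsilon^{-2\beta}$, and its weak derivative vanishes on $(0,\varepsilon)$ and equals $-2\beta v^{-2\beta-1}$ on $(\varepsilon,\infty)$, hence is bounded by $2\beta\varepsilon^{-2\beta-1}$ in absolute value. Substituting $\phi_\varepsilon$ into~\eqref{truncated weak formulation} is therefore legitimate, and all three resulting integrals are finite because $c_n\in C([0,T];L^1(0,\infty))$ and $K_n$ is bounded.

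The sign analysis is now immediate. Since $\phi_\varepsilon'\le 0$ and $g,c_n\ge 0$, the growth contribution satisfies
\[
\int_0^t\!\!\int_0^\infty \phi_\varepsilon'(v)\,g(s,v)\,c_n(s,v)\,dv\,ds\le 0.
\]
As $\phi_\varepsilon$ is non-negative and non-increasing, $\phi_\varepsilon(v_1+v_2)\le \phi_\varepsilon(v_1)$ for all $(v_1,v_2)\in(0,\infty)^2$, so that
\[
\tilde{\phi_\varepsilon}(v_1,v_2)=\phi_\varepsilon(v_1+v_2)-\phi_\varepsilon(v_1)-\phi_\varepsilon(v_2)\le -\phi_\varepsilon(v_2)\le 0,
\]
and the coagulation contribution is non-positive as well. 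Combined with $\phi_\varepsilon\le v^{-2\beta}$ and $c_0^n\le c_0$ on the initial term, the weak formulation collapses to
\[
\int_0^\infty c_n(t,v)\,\phi_\varepsilon(v)\,dv\;\le\;\int_0^\infty c_0(v)\,v^{-2\beta}\,dv\;=\;M_{-2\beta}(c_0).
\]

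Finally, letting $\varepsilon\downarrow 0$, the family $(\phi_\varepsilon)$ increases pointwise to $v\mapsto v^{-2\beta}$, so the monotone convergence theorem yields~\eqref{-2 beta moment bound} and the membership $c_n\in L^\infty(0,\infty;L_{-2\beta}^1(0,\infty))$ follows at once. The only delicate point is the design of the truncation: $\phi_\varepsilon$ must simultaneously be Lipschitz (to enter the weak formulation), non-increasing (to kill the growth term and to furnish $\tilde{\phi_\varepsilon}\le 0$) and dominated by $v^{-2\beta}$ (to pass to the limit on the right-hand side). No further work on integrability is needed because the boundedness of $K_n$ makes every term in~\eqref{truncated weak formulation} trivially finite at the level of $\phi_\varepsilon$.
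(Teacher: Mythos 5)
Your proof is correct and follows essentially the same route as the paper: the paper tests~\eqref{truncated weak formulation} with the regularization $\phi_\epsilon(v)=(v+\epsilon)^{-2\beta}$, uses the non-positivity of the growth term (since $\phi_\epsilon'\le 0$ and $g,c_n\ge 0$) and of the coagulation term (since $\tilde{\phi_\epsilon}\le 0$), bounds the initial term by $M_{-2\beta}(c_0)$, and lets $\epsilon\to 0$ via Fatou's lemma. Your truncation $\min\{v^{-2\beta},\varepsilon^{-2\beta}\}$ and the use of monotone convergence are only cosmetic variations of the same argument.
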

%%%%%%%%%%%%%%%%

\begin{proof}
	Let us define
		\begin{align}
		\phi_{\epsilon}(v):=(v+\epsilon)^{-2\beta} \;\text{for}\quad (\epsilon,v) \in (0,1)\times (0,\infty).\nonumber
	\end{align}
Then, it is clear that $\phi_{\epsilon}\in W^{1,\infty}(0,\infty)$.
	By \Cref{fixed point theorem-2}, we know that the mild solution is also a weak solution. Hence, applying $\phi_{\epsilon}$ in the weak formulation~\eqref{truncated weak formulation}, we have
	
	\begin{align} \label{weak moment beta}
		\int_{0}^{\infty} c_{n}(t,v)\phi_{\epsilon}(v)dv  =   	\int_{0}^{\infty} c_{0}^{n}(v)\phi_{\epsilon}(v)dv  
		+\int_{0}^{t}\int_{0}^{\infty} \phi_{\epsilon}^{\prime}(v)g(s,v)c_{n}(s,v)dvds \nonumber \\
		+\frac{1}{2}\int_{0}^{t}\int_{0}^{\infty} \tilde{\phi_{\epsilon}}(v_{1},v_{2})  K_{n}(v_{1},v_{2}) c_{n}(s,v_{1}) c_{n}(s,v_{2})dv_{2} dv_{1}ds, 
	\end{align}
	where
	\begin{align}
		\tilde{\phi_{\epsilon}}(v_{1},v_{2})=(v_{1}+v_{2}+\epsilon)^{-2\beta}-(v_{1}+\epsilon)^{-2\beta}-(v_{2}+\epsilon)^{-2\beta} \le 0. \label{z}
	\end{align}
	Let us first compute the first term in the right-hand side of \eqref{weak moment beta} 
	\begin{align}\label{moment beta I}
		\int_{0}^{\infty} c_{0}^{n}(v)\phi_{\epsilon}(v)dv   =\int_{0}^{\infty}(v+\epsilon)^{-2\beta}c_{0}^{n}(v) dv\leq \int_{0}^{\infty}v^{-2\beta}c_0(v) dv.
	\end{align}
	Then, we consider the second term in the right-hand side of \eqref{weak moment beta} 
	\begin{align}
		&\int_{0}^{t}\int_{0}^{\infty} \phi_{\epsilon}^{\prime}(v)g(s,v)c_{n}(s,v)dvds \nonumber\\
		=&-2\beta \int_{0}^{t}\int_{0}^{\infty}(v+\epsilon)^{-2\beta-1} g(s,v) c_{n}(s,v) dv ds. \nonumber
	\end{align}
	The non-negativity of $g$ and $c_{n}$ implies that
	\begin{align}\label{moment beta II}
		\int_{0}^{t}\int_{0}^{\infty} \phi_{\epsilon}^{\prime}(v)g(s,v)c_{n}(s,v)dvds\leq 0.
	\end{align}
	Finally, we evaluate the last term in the right-hand side of \eqref{weak moment beta} and infer from~\eqref{z} and the non-negativity of $K_n$ that
	\begin{align}
		\frac{1}{2}\int_{0}^{t}\int_{0}^{\infty} \tilde{\phi_{\epsilon}}(v_{1},v_{2})  K_{n}(v_{1},v_{2}) c_{n}(s,v_{1}) c_{n}(s,v_{2})dv_{2} dv_{1}ds \leq 0. \label{moment beta III}
	\end{align}
	Now, using~\eqref{moment beta I}, \eqref{moment beta II}, and~\eqref{moment beta III} in~\eqref{weak moment beta}, we have
	
	\begin{align}
		\int_{0}^{\infty} (v+\epsilon)^{-2\beta} c_{n}(t,v) dv \leq 	\int_{0}^{\infty} v^{-2\beta}c_0(v) dv. \nonumber
	\end{align}
	
	Taking the limit $\epsilon \rightarrow 0$ and using Fatou's lemma, we obtain the desired  result \eqref{-2 beta moment bound}.
\end{proof}

The following results are an immediate consequence of \Cref{fixed point theorem-2} and \Cref{-2 beta moment bound-0}.

%%%%%%%%%%%%%%%%
\begin{proposition}\label{Global prop-1}
	Let $n\geq 1$. There is a unique mild solution $c_n$ to \eqref{truncated-1}--\eqref{truncated-2} (which is also a weak solution according to \Cref{relation of weak and mild solution}) that satisfies, for $t>0$,
	\begin{align}
	\int_{0}^{\infty} c_n(t,v) dv&\leq 	\int_{0}^{\infty} c_0(v) dv, \nonumber \\
	\int_{0}^{\infty} vc_n(t,v) dv&= 	\int_{0}^{\infty} vc_{0}^{n}(v) dv +\int_{0}^{t} \int_{0}^{\infty} g(t,v)c_n(t,v) dv ds,\nonumber \\
	\int_{0}^{\infty} v^{-2\beta}c_n(t,v) dv&\leq 	\int_{0}^{\infty} v^{-2\beta}c_0(v) dv.\nonumber
	\end{align}
Moreover, for all $T>0$, there is a constant $C(T)>0$ such that 
\begin{equation*}
	\int_{0}^{\infty} vc_n(t,v) dv\leq C(T) \;\text{ for }\; t\in [0,T]. 
\end{equation*}
\end{proposition}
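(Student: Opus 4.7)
The proposition essentially assembles the results already established in \Cref{fixed point theorem-2} and \Cref{-2 beta moment bound-0}, so the plan is to invoke them directly and then extract the uniform bound on the first moment on $[0,T]$.

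First, the existence and uniqueness of a global mild solution $c_n$ to~\eqref{truncated-1}--\eqref{truncated-2}, together with the fact that $c_n$ is also a global weak solution, is an immediate consequence of \Cref{fixed point theorem-2} and \Cref{relation of weak and mild solution}. The first displayed inequality is nothing but the first line of~\eqref{zeroth moment}, while the second displayed identity is precisely~\eqref{rate of first moment}. The third inequality, giving the upper bound on $M_{-2\beta}(c_n(t))$, is the content of~\eqref{-2 beta moment bound} in \Cref{-2 beta moment bound-0}.

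It remains only to justify the existence of a constant $C(T)>0$ such that $M_1(c_n(t)) \le C(T)$ for $t\in [0,T]$. For this I would simply combine the identity~\eqref{rate of first moment} with the pointwise control $g(s,v) \le A v$ provided by~\eqref{characteristic-1} to obtain
\begin{equation*}
	M_1(c_n(t)) \le M_1(c_0^n) + A \int_0^t M_1(c_n(s))\,ds \le M_1(c_0) + A \int_0^t M_1(c_n(s))\,ds,
\end{equation*}
and then apply Gronwall's lemma to conclude that $M_1(c_n(t)) \le M_1(c_0) e^{At} \le M_1(c_0) e^{AT} =: C(T)$ for all $t\in [0,T]$. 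Alternatively, one can simply read off this estimate directly from the second line of~\eqref{zeroth moment}, which was already established in \Cref{fixed point theorem-2}.

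There is no real obstacle in this proof since all the analytical work is already done; the statement is a consolidation of earlier results, and the only mild computation is the Gronwall argument, which is immediate given the linear growth bound on $g$ from~\eqref{growthassum}.
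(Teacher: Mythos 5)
Your proposal is correct and matches the paper's treatment: the paper simply states that the proposition is an immediate consequence of \Cref{fixed point theorem-2} and \Cref{-2 beta moment bound-0}, which is exactly your assembly of \eqref{zeroth moment}, \eqref{rate of first moment}, and \eqref{-2 beta moment bound}. The uniform bound $M_1(c_n(t))\le M_1(c_0)e^{AT}$ indeed follows directly from the second line of \eqref{zeroth moment}, so your Gronwall argument is a harmless (and valid) redundancy.
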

%%%%%%%%%%%%%%%%

We now focus on estimating superlinear moments, exploiting the condition~\eqref{coagassum} that the growth of $K$ is at most linear for large volumes.

%%%%%%%%%%%%%%%%
\begin{lemma} \label{Tail-1}
	 Let $\Gamma[c_0]$ be the set of non-negative convex functions
	$j\in \mathcal{C}^1\left([0,\infty)\right)$ such that $j^{\prime}$ is a concave function on  $[0,\infty)$ with $ j(0)=0$, $j^{\prime}(0)\geq 0 $ and
	\begin{align}\label{N}
		N(j,c_0) := \int_{0}^{\infty} j(v)c_0(v) dv < \infty.
	\end{align}
	Let $c_n$ be the global mild solution to~\eqref{truncated-1}--\eqref{truncated-2} and $T\in (0,\infty)$. Then there is a constant $C(T)$ depending only on $A$, $k$, $c_0$, $j$, and $T$ such that
	\begin{align}
		\int_{0}^{\infty} j(v)c_n(t,v) dv \leq C(T) \text{ for }\; t\in (0,T). \label{j0}
	\end{align}
\end{lemma}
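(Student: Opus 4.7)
The plan is to apply the weak formulation \eqref{truncated weak formulation} with a test function built from $j$. Since $j$ need not lie in $W^{1,\infty}(0,\infty)$, I would first truncate by setting $j_R(v):= j(v \wedge R)$ for $R\geq 1$, which belongs to $W^{1,\infty}(0,\infty)$, satisfies $j_R'(v) = j'(v)\mathbf{1}_{(0,R)}(v)$ a.e., and obeys $j_R \uparrow j$ as $R\to\infty$. The strategy is to derive a uniform-in-$R$ bound on $J_R(t) := \int_0^\infty j_R(v) c_n(t,v)\,dv$ through a Gronwall argument and then conclude \eqref{j0} by monotone convergence.

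Two pointwise estimates for $j$ drive the analysis. First, concavity of $j'$ with $j'(0)\geq 0$ makes $j'$ subadditive, $j'(a+b)\leq j'(a)+j'(b)$; combined with the chord-below-graph inequality for the concave $j'$ on $[0,v]$, this yields
\begin{equation*}
j(v) = \int_0^v j'(s)\,ds \geq \frac{v}{2}\bigl(j'(v)+j'(0)\bigr), \quad \text{hence} \quad v j'(v) \leq 2 j(v), \ v\geq 0.
\end{equation*}
Second, subadditivity gives $j'(t+v_2)-j'(t) \leq j'(v_2)$; setting $H(t) := j(t+v_2)-j(t)-j(v_2)$, one has $H(0)=0$ (using $j(0)=0$) and $H'(t)\leq j'(v_2)$, whence integration in $t\in[0,v_1]$ produces
\begin{equation*}
\tilde{j}(v_1,v_2) = H(v_1) \leq v_1 j'(v_2), \quad \text{and by symmetry} \quad \tilde{j}(v_1,v_2) \leq v_2 j'(v_1).
\end{equation*}
A case analysis in $(v_1,v_2)$ versus $R$ also shows that $\tilde{j_R} \leq \tilde{j}$ everywhere and $\tilde{j_R} \leq 0$ whenever $\max(v_1,v_2)>R$, so the coagulation integral with $\tilde{j_R}$ is bounded above by that with $\tilde{j}$ restricted to $(0,R]^2$.

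Armed with these, I would apply \eqref{truncated weak formulation} with $\phi = j_R$ and bound the three terms. The growth integral satisfies $\int j_R'(v) g(s,v) c_n(s,v)\,dv \leq A\int_0^R v j'(v) c_n\,dv \leq 2A J_R(s)$ via $|g|\leq Av$ and $vj'(v)\leq 2j(v)$. The coagulation integral is decomposed according to the four regions in \eqref{coagassum}, restricted to $(0,R]^2$: on $(0,1)^2$, $\tilde{j}K\leq k j'(1) v_1^{1-\beta}v_2^{-\beta}$ contributes a constant depending on $M_0(c_0)$ and $M_{-2\beta}(c_0)$ thanks to \Cref{Global prop-1}; on $(0,1)\times(1,R]$ and its symmetric counterpart, $\tilde{j}K\leq 2k v_1^{1-\beta} j(v_2) = 2k v_1^{1-\beta} j_R(v_2)$ contributes at most $C(c_0) J_R(s)$; on $(1,R]^2$, splitting by whether $v_1\leq v_2$ or $v_2\leq v_1$ and applying the matched asymmetric form of $\tilde{j}$ together with $v_i j'(v_i)\leq 2 j(v_i)$ gives $\tilde{j}K \leq 4k\bigl(v_1 j_R(v_2) + v_2 j_R(v_1)\bigr)$, whose integral against $c_n\otimes c_n$ is at most $8k M_1(c_n(s)) J_R(s) \leq 8k M_1(c_0) e^{AT} J_R(s)$ on $[0,T]$.

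Summing these bounds produces an integral inequality
\begin{equation*}
J_R(t) \leq N(j,c_0) + \int_0^t \bigl[ C_1(c_0) + C_2(c_0,T) J_R(s) \bigr]\,ds, \quad t\in[0,T],
\end{equation*}
with constants independent of $R$; Gronwall's lemma then delivers $J_R(t) \leq C(T)$ uniformly in $R$, and monotone convergence as $R\to\infty$ yields \eqref{j0}. The main obstacle is the Region~IV estimate on $(1,\infty)^2$: since $K$ grows linearly, a naive bound would force the uncontrolled second moment of $c_n$ into the estimate. The resolution is to exploit the choice between $\tilde{j}\leq v_1 j'(v_2)$ and $\tilde{j}\leq v_2 j'(v_1)$ so that the factor of $v$ from $K\leq k(v_1+v_2)$ pairs with the matching variable of $j'$, enabling the collapse $v_i j'(v_i)\leq 2 j(v_i)$ and closing the Gronwall estimate in terms of $M_1(c_n)$ and $J_R$ alone. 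A secondary subtlety is that $j_R$ itself is not convex on $[0,\infty)$, which is dispatched by the case analysis ensuring $\tilde{j_R}\leq \tilde{j}$ on $(0,R]^2$ and $\tilde{j_R}\leq 0$ elsewhere.
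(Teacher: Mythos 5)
Your proof is correct and follows essentially the same route as the paper: truncate $j$ to $j_R$, test \eqref{truncated weak formulation} with $j_R$, control the growth term via $vj'(v)\le 2j(v)$, split the coagulation integral into the regions dictated by \eqref{coagassum} using the moment bounds of \Cref{Global prop-1}, and close a Gronwall estimate before letting $R\to\infty$. The only differences are cosmetic: you derive the convexity inequalities (subadditivity of $j'$, $vj'(v)\le 2j(v)$, $\tilde{j}(v_1,v_2)\le \min\{v_1j'(v_2),\,v_2j'(v_1)\}$) directly instead of citing \cite[Lemmas~A.1--A.2]{Laurencot2001}, and you handle the truncation through the pointwise comparison $\widetilde{j_R}\le \tilde{j}$ on $(0,R]^2$ and $\widetilde{j_R}\le 0$ elsewhere, rather than applying the cited inequalities to $j_R$ itself.
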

%%%%%%%%%%%%%%%%

\begin{proof}
	We set $C(T)$ as any positive constant which depends only on $A$, $K$, $j$, and $T$ throughout the proof. For $R\geq 1$, set $j_R(v):=\min(j(v),j(R))$,  $v\in (0,\infty).$ Clearly, $j_R\in W^{1,\infty}(0,\infty)$ and, from \cite[Lemmas~A.1--A.2]{Laurencot2001}, we have, for $(v_{1},v_{2})\in (0,\infty)^2$, 
	\begin{align}
		(v_{1}+v_{2})\left(j_R(v_{1}+v_{2})-j_R(v_{1})-j_R(v_{2})\right)&\leq 2\left(v_{2}j_R(v_{1})+v_{1}j_R(v_{2})\right),  \label{convex inequality-1}\\
		0\leq v_{1}j_{R}^{\prime}(v_{1})&\leq 2j_{R}(v_{1}). \label{convex inequality-2}
	\end{align}
 Let $t\in [0,T]$. We infer from~\eqref{truncated weak formulation} with $\phi=j_R$ that
	\begin{align}
		\int_{0}^{\infty} j_R(v)c_n(t,v) dv &=\int_{0}^{\infty} j_R(v)c_{0}^{n}(v) dv + \int_{0}^{t}\int_{0}^{\infty} j^{\prime}_R(v)g(s,v)c_n(s,v) dvds \nonumber \\
		&\; +\frac{1}{2}\int_{0}^{t}\int_{0}^{\infty} \int_{0}^{\infty} \widetilde{j_R}(v_{1},v_2) K_n(v_{1},v_{2})c_n(s,v_{1}) c_n(s,v_{2}) dv_2dv_{1}ds, \nonumber 
	\end{align}
where $\widetilde{j_R}(v_{1}v_2):=j_R(v_{1}+v_{2})-j_R(v_{1})-j_R(v_{2})$ for $(v_{1},v_{2})\in (0,\infty)^2$. Using \eqref{N}, we obtain
  \begin{align}
		\int_{0}^{\infty} j_R(v)c_n(t,v) dv	&\leq N(j,c_0) + \int_{0}^{t}\int_{0}^{\infty} j^{\prime}_R(v)g(s,v)c(s,v) dvds \nonumber \\
		\; &+\frac{1}{2}\int_{0}^{t}\int_{0}^{\infty} \int_{0}^{\infty} \widetilde{j_R}(v_{1},v_2) K_n(v_{1},v_{2})c_n(s,v_{1}) c_n(s,v_{2}) dv_2dv_{1}ds. \label{j1}
	\end{align}
 Using~\eqref{characteristic-1} and~\eqref{convex inequality-2} in the second term of the right-hand side of \eqref{j1}, we get
	\begin{align}
		\int_{0}^{t}\int_{0}^{\infty} j^{\prime}_R(v)g(s,v)c_n(s,v) dvds &\leq A \int_{0}^{t}\int_{0}^{\infty} j^{\prime}_R(v)vc_n(s,v) dvds \nonumber \\
		&\leq 2A \int_{0}^{t}\int_{0}^{\infty} j_R(v)c_n(s,v) dvds. \label{j2}
	\end{align}
Let us now estimate the last term in the right-hand side of \eqref{j1}. For that purpose, we divide the above integral in the following manner:
\begin{align}\label{tail-1}
	\int_{0}^{\infty}\int_{0}^{\infty} \widetilde{j_R}(v_{1},v_2) K_n(v_{1},v_{2})c_n(s,v_{1})c_n(s,v_{2}) dv_{1}dv_{2} = \sum_{i=1}^{4}Z_{i}, 
\end{align}
where
\begin{align}
	Z_{1} :=\int_{0}^{1}\int_{0}^{1} \widetilde{j_R}(v_{1},v_2) K_n(v_{1},v_{2})c_n(s,v_{1})c_n(s,v_{2}) dv_{1}dv_{2},  \nonumber
\end{align}
\begin{align}
	Z_{2} :=\int_{0}^{1}\int_{1}^{\infty} \widetilde{j_R}(v_{1},v_2) K_n(v_{1},v_{2})c_n(s,v_{1})c_n(s,v_{2}) dv_{1}dv_{2},  \nonumber
\end{align}
\begin{align}
	Z_{3} :=\int_{1}^{\infty}\int_{0}^{1} \widetilde{j_R}(v_{1},v_2) K_n(v_{1},v_{2})c_n(s,v_{1})c_n(s,v_{2}) dv_{1}dv_{2} =Z_2,  \nonumber
\end{align}
\begin{align}
	Z_{4} :=\int_{1}^{\infty}\int_{1}^{\infty} \widetilde{j_R}(v_{1},v_2) K_n(v_{1},v_{2})c_n(s,v_{1})c_n(s,v_{2}) dv_{1}dv_{2}.  \nonumber
\end{align}
Applying~\eqref{coagassum}, \eqref{convex inequality-1} and using the definition of $K_n$ and \Cref{Global prop-1}, the term $Z_{1}$ can be evaluated as  
\begin{align*}
	Z_{1}&\leq 2\int_{0}^{1}\int_{0}^{1}\left( \frac{v_{2}j_R(v_{1})+v_{1}j_R(v_{2})}{v_{1}+v_{2}}\right) K_n(v_{1},v_{2})c_n(s,v_{1})c_n(s,v_{2}) dv_{1}dv_{2}  \\ 
	&\leq 2k\int_{0}^{1}\int_{0}^{1}\left( j_R(v_{1})+j_R(v_{2})\right) (v_{1}v_{2})^{-\beta}c_n(s,v_{1})c_n(s,v_{2}) dv_{1}dv_{2}  \\ 
	&\leq 4k j(1)\int_{0}^{1}\int_{0}^{1} v_{1}^{-\beta} v_{2}^{-\beta}c_n(s,v_{1})c_n(s,v_{2}) dv_{1}dv_{2}   \\
	&\leq 4k j(1) M^2_{-2\beta}(c_0).   	
\end{align*}
Analogously, one can estimate
\begin{align*}
	Z_{2} =Z_3 & \leq 2k \int_0^1 \int_1^\infty \left( \frac{v_1 v_2^{1-\beta}}{v_1+v_2} j_R(v_1) + \frac{v_1^2 v_2^{-\beta}}{v_1+v_2} j_R(v_2) \right) c_n(s,v_1) c_n(s,v_2) dv_1dv_2\\ 
	& \leq 2k \int_0^1 \int_1^\infty \left( v_2^{1-\beta} j_R(v_1) + v_1 v_2^{-\beta} j(1) \right) c_n(s,v_1) c_n(s,v_2) dv_1dv_2\\ 
	& \leq 2k \left[  M_{-2\beta}(c_0) \int_{0}^{\infty} j_R(v_{1})c_n(s,v_{1})dv_{1} + j(1) M_{-2\beta}(c_0) M_1(c_0) e^{AT} \right], 
\end{align*}
and
\begin{align*}
	Z_{4} & \le 2k \int_1^\infty \int_1^\infty \big( v_1 j_R(v_2) + v_2 j_R(v_1) \big) c_n(s,v_1) c_n(s,v_2) dv_1 dv_2 \\
	& \leq 4k M_1(c_0) e^{AT} \int_{0}^{\infty} j_R(v_{1})c_n(s,v_{1})dv_{1}  
	\leq  C(T)  \int_{0}^{\infty} j_R(v_{1})c_n(s,v_{1})dv_{1}.  	
\end{align*}
Gathering the estimates on $Z_{1}$, $Z_{2}$, $Z_{3}$ and $Z_{4}$ and inserting them in~\eqref{tail-1}, we obtain
\begin{align}
	&\frac{1}{2}\int_{0}^{\infty}\int_{0}^{\infty} \widetilde{j_R}(v_{1},v_2) K_n(v_{1},v_{2})c_n(s,v_{1})c_n(s,v_{2}) dv_{1}dv_{2} \nonumber \\
	&\leq C(T) + C(T) \int_{0}^{\infty} j_R(v_{1})c_n(s,v_{1})dv_{1}. \label{j3} 
\end{align}
Now, using~\eqref{j2} and~\eqref{j3} in~\eqref{j1}, we obtain
	\begin{equation*}
		\int_{0}^{\infty} j_R(v)c_n(t,v)dv  \leq C(T)+ C(T)\int_{0}^{t}  \int_{0}^{\infty} j_R(v)c_n(s,v)dvds.
	\end{equation*}
	Hence, by Gronwall's inequality, we have 
	\begin{equation*}
		\int_{0}^{\infty} j_R(v)c_n(t,v)dv  \leq C(T),
	\end{equation*}
and~\eqref{j0} follows from the above inequality by Fatou's Lemma after letting $R\rightarrow \infty$.
\end{proof}

%%%%%%%%%%%%%%%%
\begin{corollary}\label{second moment lemma-1}
	Suppose that $c_0$ belongs to $L_2^1(0,\infty)$. Then, for every $T>0$,
	there exists a positive constant $C_{sm}(T)$ such that
	\begin{equation*}%\label{secondmomentbound-0}
	M_{2}(c_{n}(t)) \leq C_{sm}(T), \quad t \in [0, T], \quad n\geq 1.
	\end{equation*}
\end{corollary}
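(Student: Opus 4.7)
The plan is to derive \Cref{second moment lemma-1} as a direct consequence of \Cref{Tail-1} by choosing the admissible weight $j(v) := v^2$ for $v \ge 0$. This reduces the whole argument to verifying that this $j$ belongs to the class $\Gamma[c_0]$ defined in \Cref{Tail-1}.

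The verification is routine but let me list the checks I would perform. First, $j \in C^1([0,\infty))$ is non-negative with $j(0) = 0$ and $j'(v) = 2v$, so $j'(0) = 0 \ge 0$. Next, $j''(v) = 2 \ge 0$ gives convexity of $j$, while $j'(v) = 2v$ is affine and hence concave on $[0,\infty)$, as required. Finally, the hypothesis $c_0 \in L_2^1(0,\infty)$ (together with the standing assumption~\eqref{initialassum}, which is needed for \Cref{fixed point theorem-2} to produce $c_n$) ensures
\begin{equation*}
N(j,c_0) = \int_0^\infty v^2 c_0(v)\,dv = M_2(c_0) < \infty,
\end{equation*}
so~\eqref{N} holds.

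All hypotheses of \Cref{Tail-1} being fulfilled, the conclusion~\eqref{j0} applied to this particular $j$ immediately yields a constant $C(T)$, depending only on $A$, $k$, $c_0$, and $T$, such that
\begin{equation*}
M_2(c_n(t)) = \int_0^\infty v^2 c_n(t,v)\,dv \le C(T) \quad \text{for all } t \in [0,T],
\end{equation*}
uniformly in $n \ge 1$. Setting $C_{sm}(T) := C(T)$ completes the proof. There is no genuine obstacle here: the entire content of the corollary has been absorbed into \Cref{Tail-1}, and the only point to double-check is that $j(v)=v^2$ is indeed a legitimate choice of weight, in particular that its derivative is concave — which holds trivially because it is affine.
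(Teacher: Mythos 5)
Your proposal is correct and matches the paper's own proof: both take $j(v)=v^2$, check that it lies in $\Gamma[c_0]$ (convexity, affine hence concave derivative, $j(0)=j'(0)=0$, and $N(j,c_0)=M_2(c_0)<\infty$), and then invoke \Cref{Tail-1}. No further comment is needed.
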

%%%%%%%%%%%%%%%%

\begin{proof}
	 The function $J_{2} : v\mapsto  v^2$ is convex on $[0,\infty)$ with concave derivative, and satisfies $J_{2}(0)=J_{2}^{\prime}(0) = 0$. Hence, $J_2\in \Gamma[c_0]$ and \Cref{second moment lemma-1} straightforwardly follows from \Cref{Tail-1} with  $j=J_{2}$.
\end{proof}

We are left with the tail behavior of $c_{n}$, which we analyze in the next lemma.

%%%%%%%%%%%%%%%%
\begin{lemma} \label{Tail-2}
	For $T>0$,
	\begin{equation} \label{first moment uniform integrability}
		\lim_{R\to\infty} \sup_{n\geq 1} \sup_{t\in [0,T]}	\int_{R}^{\infty} v c_n(t,v)dv  = 0.  
	\end{equation}	
\end{lemma}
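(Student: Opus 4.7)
The plan is to reduce \Cref{Tail-2} to \Cref{Tail-1} by exhibiting a single convex function $j\in\Gamma[c_0]$ that grows strictly superlinearly at infinity; a Markov-type inequality then yields uniform tail smallness. The core input is a refined version of the de la Vall\'ee-Poussin theorem: since $c_0\in L^1_{-2\beta,1}(0,\infty)$ implies $v c_0(v)\in L^1(0,\infty)$, equivalently the identity $v\mapsto v$ lies in $L^1\big((0,\infty); c_0(v)\,dv\big)$, there exists a non-negative convex function $j\in C^1([0,\infty))$ with $j(0)=0$, $j'(0)\ge 0$, $j'$ concave on $[0,\infty)$,
\[
\lim_{v\to\infty}\frac{j(v)}{v}=+\infty, \qquad \int_0^\infty j(v)\, c_0(v)\,dv<\infty.
\]
This is the variant routinely used in the coagulation literature (see, e.g., \cite{Laurencot2001}); I would cite it rather than reprove it. In particular $j\in\Gamma[c_0]$, so \Cref{Tail-1} yields a constant $C(T)>0$ with
\[
\sup_{n\ge 1}\sup_{t\in[0,T]}\int_0^\infty j(v)\, c_n(t,v)\,dv \le C(T).
\]

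Next I would exploit the fact that $j$ is convex with $j(0)=0$, so the map $v\mapsto j(v)/v$ is non-decreasing on $(0,\infty)$. Hence, for every $R\ge 1$, $n\ge 1$ and $t\in[0,T]$,
\[
\int_R^\infty v\, c_n(t,v)\,dv = \int_R^\infty \frac{v}{j(v)}\, j(v)\, c_n(t,v)\,dv \le \frac{R}{j(R)}\int_0^\infty j(v)\, c_n(t,v)\,dv \le \frac{R\, C(T)}{j(R)}.
\]
Since $R/j(R)\to 0$ as $R\to\infty$, taking the supremum over $n$ and $t\in[0,T]$ and then letting $R\to\infty$ delivers~\eqref{first moment uniform integrability}.

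The only genuine difficulty lies in producing the superlinear weight $j$ with the prescribed regularity (convex, $C^1$, with concave derivative) rather than merely a Young function; this is the sole reason to invoke the refined, as opposed to the classical, de la Vall\'ee-Poussin statement. Once $j$ is in hand, \Cref{Tail-1} and the monotonicity of $j(v)/v$ make the conclusion essentially immediate, and no new estimates on $K_n$, $g$, or the characteristics are required.
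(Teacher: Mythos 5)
Your proposal is correct and follows essentially the same route as the paper: invoke the refined de la Vall\'ee-Poussin theorem to produce a superlinear $j\in\Gamma[c_0]$, apply \Cref{Tail-1} to bound $\sup_{n,t}\int_0^\infty j(v)c_n(t,v)\,dv$, and conclude by a Markov-type tail estimate. The only (cosmetic) difference is that you use the monotonicity of $v\mapsto j(v)/v$ to write the tail factor as $R/j(R)$, whereas the paper simply bounds it by $\sup_{v\ge R} v/j_0(v)$, which tends to $0$ by superlinearity in either form.
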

%%%%%%%%%%%%%%%%

\begin{proof}
A refined form of the de la Vall\'{e}e-Poussin theorem, originally derived in \cite[Proposition~I.1.1]{Le1977} and also reported in \cite[Theorem~7.1.6]{BLL_book}, establishes the existence of a function $j_0\in \Gamma[c_0]$ satisfying also
\begin{equation}\label{convexinfty}
	\lim_{v \rightarrow \infty} j_{0}^{\prime}(v) = \lim_{v \rightarrow \infty} \frac{j_0(v)}{v} = \infty.
\end{equation}
For $R\geq 1,$ we have
	\begin{align}
		&\sup_{n\geq 1} \sup_{t\in [0,T]}	\int_{R}^{\infty} vc_n(t,v)dv  \nonumber \\
		&\leq \sup_{n\geq 1} \sup_{t\in [0,T]} \sup_{v\in [R,\infty)}\left(\frac{v}{j_0(v)}\right)	\int_{R}^{\infty} j_0(v)c_n(t,v)dv. \nonumber
	\end{align}
Since $j_0\in \Gamma[c_0]$, we can utilize \Cref{Tail-1} to obtain
	\begin{align}	
		\sup_{n\geq 1} \sup_{t\in [0,T]}	\int_{R}^{\infty} vc_n(t,v)dv \leq C(T) \sup_{v\in [R,\infty)}\left(\frac{v}{j_0(v)}\right).  \nonumber
	\end{align}	
This inequality implies~\eqref{first moment uniform integrability} by~\eqref{convexinfty}.
\end{proof}

We now examine the behavior of $c_{n}$ on small measurable subsets of $(0,\infty)$. Accordingly, we present two lemmas; detailed proofs can be found in \cite[Lemmas~4.3--4.4]{Laurencot2001}.

%%%%%%%%%%%%%%%%
\begin{lemma}\label{Auxiliary lemma-1}
	Let $Z \in \mathcal{M}$ with $|Z|<\infty$ and m be a positive integer. Then there exist $2^m$ subsets $(Z_l) \in \mathcal{M}$ such that 
	\begin{align*}
		|Z_l|\leq |Z|, \;  l\in \left\{1,.....2^m\right\}, \\
		2^mZ:=\left\{2^mv, v\in Z\right\}\subset\bigcup_{l=1}^{2^m} \; Z_l ,
	\end{align*}
	 where $\mathcal{M}$ is the set of all measurable subsets of $(0,\infty)$ and $|Z|$ denotes the Lebesgue measure of $Z$.
\end{lemma}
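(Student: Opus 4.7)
The plan is to prove the lemma by directly partitioning $2^m Z$ into $2^m$ measurable pieces of equal measure. The starting point is the dilation property of Lebesgue measure, which gives $|2^m Z| = 2^m |Z|$; in particular, $2^m Z$ is a measurable subset of $(0,\infty)$ of finite measure.

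To construct the partition, I would introduce the distribution function $\phi : [0,\infty] \to [0, 2^m |Z|]$ defined by $\phi(t) := |2^m Z \cap (0,t)|$. It is non-decreasing with $\phi(0) = 0$ and $\phi(\infty) = 2^m |Z|$, and it is continuous because Lebesgue measure is atomless and $|2^m Z| < \infty$. By the intermediate value theorem I can select $0 = t_0 \le t_1 \le \cdots \le t_{2^m} = \infty$ with $\phi(t_l) = l |Z|$, and then set $Z_l := 2^m Z \cap (t_{l-1}, t_l]$ for $l \in \{1, \ldots, 2^m\}$. Each $Z_l$ is measurable with $|Z_l| = \phi(t_l) - \phi(t_{l-1}) = |Z|$, and $\bigcup_{l=1}^{2^m} Z_l = 2^m Z$, so in particular $2^m Z \subset \bigcup_{l=1}^{2^m} Z_l$, as required.

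An equally natural alternative is a proof by induction on $m$. The base case $m=1$ amounts to splitting the measure-$2|Z|$ set $2Z$ into two pieces of measure $|Z|$ by the same intermediate-value argument. For the inductive step, given a covering $2^m Z \subset \bigcup_l Z_l^{(m)}$ with $|Z_l^{(m)}| \le |Z|$, I would observe that $2^{m+1} Z \subset \bigcup_l 2 Z_l^{(m)}$ with each $|2 Z_l^{(m)}| \le 2|Z|$, and then halve each of these using the base case to produce $2 \cdot 2^m = 2^{m+1}$ pieces of measure at most $|Z|$.

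The statement is an elementary measure-theoretic fact and there is no serious obstacle. The only point requiring any care is the continuity of $\phi$, which follows from the non-atomicity of Lebesgue measure (equivalently, from the dominated convergence theorem applied to $\chi_{2^m Z \cap (0,t)}$ as $t$ varies over $[0,\infty]$).
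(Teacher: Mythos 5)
Your argument is correct. Note that the paper itself does not prove this lemma: it simply cites \cite{Laurencot2001} (Lemmas~4.3--4.4), where the standard argument is precisely your second, inductive route --- for $m=1$ one uses the continuity of $r\mapsto |2Z\cap(0,r)|$ to cut $2Z$ (of measure $2|Z|$) into two pieces of measure $|Z|$, and then one iterates, halving each piece at every step. Your primary construction is a one-shot version of the same idea: applying the intermediate value theorem directly to $\phi(t)=|2^mZ\cap(0,t)|$ (which is even $1$-Lipschitz, so continuity needs no appeal to non-atomicity) yields thresholds $t_0\le t_1\le\dots\le t_{2^m}$ with $\phi(t_l)=l|Z|$, and the sets $Z_l=2^mZ\cap(t_{l-1},t_l]$ form an exact partition of $2^mZ$ with $|Z_l|=|Z|$, which is slightly stronger than the covering with $|Z_l|\le|Z|$ required by the statement. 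The only points deserving a word of care, neither of which is a gap, are the last threshold (the value $2^m|Z|$ may only be attained in the limit, so one takes $t_{2^m}=\infty$ and the last piece $2^mZ\cap(t_{2^m-1},\infty)$, using $\lim_{t\to\infty}\phi(t)=|2^mZ|=2^m|Z|$) and the trivial case $|Z|=0$, where one may simply take every $Z_l=2^mZ$. Either of your two arguments is a complete and acceptable proof of \Cref{Auxiliary lemma-1}.
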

%%%%%%%%%%%%%%%%

%%%%%%%%%%%%%%%%
\begin{lemma} \label{Auxiliary lemma-2}
Let $T>0$ and $Z \in \mathcal{M}$ with $|Z|<\infty$. Then there exists an integer $m_0$  depending only on $A$ and $T$ such that
	\begin{equation*}%\label{auxiliary inequality}
		|Y(s;t,Z)|\leq 2^{m_0}|Z|, \quad (s,t)\in [0,T]^2 .
	\end{equation*}
\end{lemma}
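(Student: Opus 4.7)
The plan is to exploit the Jacobian identity $J(s;t,v) = \partial_v Y(s;t,v) = \exp\bigl(\int_s^t b(\sigma, Y(\sigma;t,v))\,d\sigma\bigr)$ already recorded earlier in the paper, together with the fact that $b = -\partial_v g$ is uniformly bounded by $A$ on $[0,\infty)^2$ thanks to the assumption~\eqref{growthassum}. This immediately yields the pointwise estimate
\begin{equation*}
0 < J(s;t,v) \le \exp\bigl(A|t-s|\bigr) \le e^{AT}, \qquad (s,t,v)\in [0,T]^2\times (0,\infty).
\end{equation*}

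With this bound in hand, I would first argue that, for every fixed $(s,t)\in [0,T]^2$, the map $v\mapsto Y(s;t,v)$ is a $C^1$ diffeomorphism from $(0,\infty)$ onto itself: continuity in $v$ and strict positivity of $J$ give a strictly increasing $C^1$ bijection by the inverse function theorem, and the already noted identity $Y(t;s,Y(s;t,v))=v$ identifies its inverse. Consequently the image of any measurable $Z$ is measurable, and the one-dimensional area formula applies to give
\begin{equation*}
|Y(s;t,Z)| \;=\; \int_Z J(s;t,v)\,dv \;\le\; e^{AT}\,|Z|.
\end{equation*}

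Finally I would choose $m_0$ as any integer with $2^{m_0}\ge e^{AT}$, for instance $m_0 := \lceil AT/\ln 2\rceil$, which depends only on $A$ and $T$ as required, completing the proof. There is no substantive obstacle here: the statement is essentially just a uniform Jacobian bound combined with the change-of-variables formula, and the rounding to a dyadic factor $2^{m_0}$ is purely cosmetic and matches the form needed in the companion \Cref{Auxiliary lemma-1} for the weak compactness argument that follows.
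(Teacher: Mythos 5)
Your proof is correct and follows essentially the same route as the argument the paper relies on (it defers to the cited literature rather than spelling it out): a uniform Jacobian/Lipschitz bound $J(s;t,v)\le e^{A|t-s|}\le e^{AT}$ on the characteristic flow, the change-of-variables (or Lipschitz-image) estimate $|Y(s;t,Z)|\le e^{AT}|Z|$, and the choice $m_0\ge AT/\ln 2$ so that $2^{m_0}\ge e^{AT}$.
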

%%%%%%%%%%%%%%%%

Having completed this preparation, we are poised to articulate the main result of this section.

%%%%%%%%%%%%%%%%
\begin{proposition} \label{uniform integrability proposition}
	Given $T>0$ and $\epsilon > 0$, there exists $\delta_\epsilon > 0$ such that for all $n\ge1$ and  $t\in [0,T]$
		\begin{align}
			\int_{Z} v^{-\beta} c_n(t,v) dv \leq \epsilon  \hspace{1cm} \text{whenever} \hspace{1cm} |Z| \leq \delta_\epsilon.\nonumber
		\end{align}
\end{proposition}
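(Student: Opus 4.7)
The plan is to decompose the integration domain $(0,\infty)$ into the three pieces $(0,1)$, $(1,R)$, and $(R,\infty)$ for a large parameter $R$, and to prove the desired uniform integrability separately on each piece. On the tail $(R,\infty)$ the crude bound $v^{-\beta}\leq R^{-\beta}$ together with $M_0(c_n(t))\leq M_0(c_0)$ from \Cref{Global prop-1} yields $\int_{Z\cap (R,\infty)} v^{-\beta} c_n\,dv \leq R^{-\beta} M_0(c_0)$, so the real work concerns the other two pieces. On each of them I would derive a Gronwall-type integral inequality from the mild formulation (\Cref{defin:mild}), dropping the nonnegative loss contribution in $Q_n(c_n)$, and performing the change of variables $w=Y(s;t,v)$ followed by the substitution $u=w-v_2$ in the coagulation integral; recall that for $s\leq t$ one has $Y(s;t,v)\leq v$, hence $v^{-\beta}\leq w^{-\beta}$ after the substitution.

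For the first piece, introduce $\eta_n^-(t,\delta):=\sup\bigl\{\int_Z v^{-\beta} c_n(s,v)\,dv : Z\subset(0,1),\ |Z|\leq\delta,\ s\in[0,t]\bigr\}$. The crucial observation is that when $Z\subset(0,1)$ and $s\leq t$ the image $Z_s:=Y(s;t,Z)$ remains in $(0,1)$, since $Y(s;t,v)\leq v$ (backward flow of the nonnegative vector field $g$). Consequently the gain integral is supported on $\{u+v_2\in Z_s\}\subset(0,1)$, which forces $u,v_2\in(0,1)$ and activates only the first regime $K_n(u,v_2)\leq k(uv_2)^{-\beta}$ of \eqref{coagassum}. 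Bounding $(u+v_2)^{-\beta}\leq v_2^{-\beta}$, invoking \Cref{Auxiliary lemma-2} and \Cref{-2 beta moment bound-0}, and applying Fubini lead to the closed inequality
\[
\eta_n^-(t,\delta)\;\leq\;\omega(2^{m_0}\delta)\;+\;k M_{-2\beta}(c_0) \int_0^t \eta_n^-(s,2^{m_0}\delta)\,ds,
\]
where $\omega(\delta):=\sup_{|A|\leq\delta} \int_A w^{-\beta} c_0(w)\,dw$ is the modulus of integrability of the single $L^1$ function $w^{-\beta}c_0$ (which lies in $L^1$ since $v^{-\beta}\leq v^{-2\beta}+v$) and therefore tends to zero as $\delta\to 0$. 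Iterating this inequality $N$ times, controlling the residual via the universal bound $\eta_n^-\leq M_{-\beta}(c_0)$, and choosing $\delta = 2^{-Nm_0}\delta_1$ with $N$ large and $\delta_1$ small then yields $\sup_{n,\,t\in[0,T]} \eta_n^-(t,\delta)\to 0$ as $\delta\to 0$.

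Next, for $R>1$ fixed, introduce $\mu_n(t,\delta,R):=\sup\bigl\{\int_Z c_n(s,v)\,dv : Z\subset(1,R),\ |Z|\leq\delta,\ s\in[0,t]\bigr\}$. For such $Z$ the image $Z_s$ is contained in $(e^{-AT},R)$, which caps $u+v_2\leq R$ in the gain integral. Splitting this integral by the three regimes of \eqref{coagassum}, the $(0,1)^2$ region contributes $k M_{-2\beta}(c_0)\,\eta_n^-(s,2^{m_0}\delta)$, already controlled by the previous step; the $(0,1)\times(1,R)$ region together with its symmetric partner uses $K_n\leq kRu^{-\beta}$ and the $(1,R)^2$ region uses $K_n\leq 2kR$, each producing a contribution of the form $C_R\,\mu_n(s,2^{m_0}\delta,R)$ with $C_R = 2kR[M_{-\beta}(c_0)+M_0(c_0)]$. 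Setting $\omega_0(\delta):=\sup_{|A|\leq\delta}\int_A c_0(w)\,dw$, the outcome is
\[
\mu_n(t,\delta,R)\;\leq\;\omega_0(2^{m_0}\delta) \;+\; k M_{-2\beta}(c_0)\,T\,\eta_n^-(T,2^{m_0}\delta) \;+\; C_R \int_0^t \mu_n(s,2^{m_0}\delta,R)\,ds.
\]
Because $R$ is fixed here, $C_R$ is a constant and the very same iteration scheme as in the first step gives $\sup_{n,\,t\in[0,T]} \mu_n(t,\delta,R)\to 0$ as $\delta\to 0$. Combining the three pieces via $\int_Z v^{-\beta} c_n \leq \eta_n^-(t,|Z|) + \mu_n(t,|Z|,R) + R^{-\beta} M_0(c_0)$, for any $\epsilon>0$ I would first choose $R$ so large that $R^{-\beta}M_0(c_0)<\epsilon/3$ and then choose $\delta_\epsilon$ (depending on this $R$) so that each of the remaining two terms is also $<\epsilon/3$.

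The main technical subtlety will be the \emph{order of parameter choices}. The coefficient $C_R$ in the $\mu_n$-inequality grows linearly with $R$, so a monolithic single-scale Gronwall inequality that also needed $R\to\infty$ to kill a tail would blow up. Decoupling a scale-free estimate on $(0,1)$ (where the singular regime of $K$ forces the gain support to stay inside $(0,1)$ and the Gronwall coefficient is the $R$-independent constant $kM_{-2\beta}(c_0)$) from a scale-dependent estimate on $(1,R)$ (where $R$ is frozen \emph{before} the iteration is carried out) and handling $(R,\infty)$ by the trivial pointwise bound $v^{-\beta}\leq R^{-\beta}$ is the key maneuver that side-steps this difficulty.
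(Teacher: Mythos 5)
Your argument is correct, but it is organized quite differently from the paper's proof, so let me compare the two routes. The paper works with a single weighted modulus $\mathcal{E}_{n,\delta}(t)=\sup\{\int_Z v^{-\beta}c_n(t,v)dv:\ |Z|\le\delta\}$ over \emph{all} measurable sets, with no size cutoff $R$: after the change of variables along characteristics it splits the gain term into the four regions $(0,1)^2$, $(0,1)\times(1,\infty)$, $(1,\infty)\times(0,1)$, $(1,\infty)^2$, and in the large-size region the retained weight $(v_1+v_2)^{-\beta}$ converts the at most linear growth of $K$ in~\eqref{coagassum} into $(v_1+v_2)^{1-\beta}\le v_1v_2^{-\beta}+v_1^{-\beta}v_2$, which pairs with the uniform first-moment bound of \Cref{Global prop-1}; thus the Gronwall coefficient is $R$-free and your worry about a ``monolithic'' estimate blowing up does not actually materialize once the weight $v^{-\beta}$ is kept on the whole line. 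The second difference is how the scale mismatch $\delta\mapsto 2^{m_0}\delta$ created by \Cref{Auxiliary lemma-2} is absorbed: the paper covers $Y(s;t,Z)$ by $2^{m_0}$ sets of measure at most $\delta$ using \Cref{Auxiliary lemma-1}, so the inequality closes at the fixed scale $\delta$ and a single Gronwall argument gives $\mathcal{E}_{n,\delta}(t)\le C(T)\mathcal{E}_\delta(0)$, whereas you accept the inflated scale $2^{m_0}\delta$ on the right-hand side and compensate by iterating $N$ times, killing the remainder through the factorial factor $(CT)^N/N!$ and the a priori bounds $\eta_n^-\le M_{-2\beta}(c_0)$, $\mu_n\le M_0(c_0)$; this is valid (and dispenses with the covering lemma), at the price of the three-parameter bookkeeping $R$, then $N$, then $\delta_\epsilon$. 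Your key structural observations are sound: $Y(s;t,v)\le v$ for $s\le t$ indeed keeps $Y(s;t,Z)\subset(0,1)$ when $Z\subset(0,1)$, so only the singular regime of $K$ is active there and the coefficient $kM_{-2\beta}(c_0)$ is scale- and $R$-independent, exactly as in the paper's $W_1$; the tail $(R,\infty)$ is trivial; and the initial-data term is controlled by the absolute continuity of $v\mapsto v^{-\beta}c_0(v)\in L^1$, as in the paper. Only cosmetic slips remain: the constant in your $(0,1)\times(1,R)$ estimate should involve a uniform bound on $M_{-\beta}(c_n(s))$ (e.g.\ $M_{-2\beta}(c_0)+e^{AT}M_1(c_0)$) rather than $M_{-\beta}(c_0)$ itself, and the lower bound $Y(s;t,v)\ge e^{-AT}$ for $v\ge 1$ is never actually needed. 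In short: your proof is a correct, more piecewise variant; the paper's buys a shorter argument (one weighted Gronwall, no cutoff, no iteration) by keeping the weight $v^{-\beta}$ everywhere and using the covering lemma, while yours buys independence from \Cref{Auxiliary lemma-1} and a more elementary treatment of large sizes.
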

%%%%%%%%%%%%%%%%

\begin{proof}
We take $C_p=2^{m_0}$ where $m_0$ is given in \Cref{Auxiliary lemma-2}  .\\
Let $t\in [0,T], n\geq 1, \;\delta \in (0,1)$ and define
\begin{equation*}
	\mathcal{E}_{n,\delta}(t):=	\sup  \left\{\!\begin{aligned}	\int_{0}^{\infty} \chi _{Z} (v) v^{-\beta}c_n(t,v)dv, Z \in \mathcal{M} \;\text{with} \; |Z|\leq \delta \end{aligned}\right\}.  \nonumber
\end{equation*}
Consider $Z \in \mathcal{M}$ having $|Z|\leq \delta$. According to  \Cref{Auxiliary lemma-1} and \Cref{Auxiliary lemma-2}, for each $t\in [0,T]$ and $s\in[0,t]$, we have $C_p$ subsets $(Z^{s,t}_l)\in \mathcal{M}$ such that $|Z^{s,t}_l|\leq \delta$ and 
\begin{equation}
	Y(s;t,Z) \subset \bigcup_{l=1}^{C_p} Z^{s,t}_l. \label{C}
\end{equation}
Next, since $c_n$ is a mild solution to~\eqref{truncated-1}--\eqref{truncated-2}, we obtain
\begin{align} 
	\int_{Z} v^{-\beta} c_n(t,v) dv	= & \int_{0}^{t}	\int_{Z} v_{1}^{-\beta} Q_n(c_n) (s,Y(s;t,v_{1})) J(s;t,v_{1})dv_{1}ds  \nonumber  \\ 
	+	& \int_{Z} v^{-\beta} c_0(Y(0;t,v))J(0;t,v)dv, \label{uniform integrability1} 
\end{align}
Now let
\begin{equation*}
	I_1:=\int_{0}^{t}\int_{Z} v_{1}^{-\beta} Q_n(c_n) (s,Y(s;t,v_{1})) J(s;t,v_{1})dv_{1}ds.  
\end{equation*}
Therefore, by \eqref{characteristic-4}, and the non-negativity of $c_n$ and $K_{n}$, we have 
\begin{equation*}
	 I_1 \leq \frac{1}{2} \int_{0}^{t}	\int_{Y(s;t,Z)} Y^{-\beta}(t;s,v_{1}) \int_{0}^{v_{1}} K_n(v_{1}-v_{2},v_{2})c_n(s,v_{1}-v_{2})c_n(s,v_{2}) dv_{2}dv_{1}ds. 
\end{equation*}
Owing to \eqref{inequality-13}, we have
\begin{equation*}
I_1 \leq \frac{1}{2} \int_{0}^{t}	\int_{Y(s;t,Z)} v_{1}^{-\beta} \int_{0}^{v_{1}} K_n(v_{1}-v_{2},v_{2})c_n(s,v_{1}-v_{2})c_n(s,v_{2})dv_{2}dv_{1}ds.  
\end{equation*}
Hence, by Fubini's theorem,
\begin{align}
	I_1	&\leq \frac{1}{2} \int_{0}^{t}	\int_{0}^{\infty} \int_{0}^{\infty}  (v_{2}+v_{1})^{-\beta} \chi_{Y(s;t,Z)}(v_{2}+v_{1})  K_n(v_{1},v_{2})c_n(s,v_{1})c_n(s,v_{2}) dv_{1}dv_{2}ds = \sum_{i=1}^{4}W_{i}, \nonumber 
\end{align}
where
\begin{equation*}
	W_1 := \int_{0}^{t}	\int_{0}^{1} \int_{0}^{1}  (v_{1}+v_{2})^{-\beta} \chi_{Y(s;t,Z)}(v_{1}+v_{2})  K_n(v_{1},v_{2})c_n(s,v_{1})c_n(s,v_{2})dv_{2}dv_{1}ds,  
\end{equation*}
\begin{equation*}
	W_2 :=\int_{0}^{t}	\int_{0}^{1} \int_{1}^{\infty}  (v_{1}+v_{2})^{-\beta} \chi_{Y(s;t,Z)}(v_{1}+v_{2})  K_n(v_{1},v_{2})c_n(s,v_{1})c_n(s,v_{2})dv_{2}dv_{1}ds,  
\end{equation*}
\begin{equation*}
	W_3 :=\int_{0}^{t}	\int_{1}^{\infty} \int_{0}^{1}  (v_{1}+v_{2})^{-\beta} \chi_{Y(s;t,Z)}(v_{1}+v_{2})  K_n(v_{1},v_{2})c_n(s,v_{1})c_n(s,v_{2})dv_{2}dv_{1}ds = W_2,
\end{equation*}
and
\begin{equation*}
	W_4 :=\int_{0}^{t}	\int_{1}^{\infty} \int_{1}^{\infty}  (v_{1}+v_{2})^{-\beta} \chi_{Y(s;t,Z)}(v_{1}+v_{2})  K_n(v_{1},v_{2})c_n(s,v_{1})c_n(s,v_{2})dv_{2}dv_{1}ds.  
\end{equation*}
By the definition of $K_{n}$ and \eqref{coagassum},
\begin{align*}
	W_1 &\leq k\int_{0}^{t}	\int_{0}^{1} \int_{0}^{1}  v_{1}^{-\beta} \chi_{Y(s;t,Z)}(v_{1}+v_{2}) v_{1}^{-\beta}v_{2}^{-\beta} c_n(s,v_{1})c_n(s,v_{2}) dv_{2}dv_{1}ds  \\
	&\leq k\int_{0}^{t}	\int_{0}^{1}  v_{1}^{-2\beta}\int_{0}^{1}  \chi_{-v_{1}+Y(s;t,Z)}(v_{2}) v_{2}^{-\beta}c_n(s,v_{1})c_n(s,v_{2}) dv_{2}dv_{1}ds.
\end{align*}
Next, \Cref{Global prop-1}, \eqref{C}, and the translation invariance of Lebesgue measure give 
\begin{align*}
	W_1 &\leq k\sum_{l=1}^{C_p}\int_{0}^{t}	\int_{0}^{1}  v_{1}^{-2\beta}c_n(s,v_{1})\left(\int_{0}^{1}  \chi_{-v_{1}+Z^{s,t}_l}(v_{2})  v_{2}^{-\beta}c_n(s,v_{2})dv_{2}\right)dv_{1}ds \\
	&\leq k\sum_{l=1}^{C_p}\int_{0}^{t} \mathcal{E}_{n,\delta}(s) \int_{0}^{1}  v_{1}^{-2\beta}c_n(s,v_{1})dv_{1}ds \leq  C(T) \int_{0}^{t} \mathcal{E}_{n,\delta}(s)ds.\nonumber 
\end{align*}
Similarly,
\begin{align*}
	W_2=W_3 &\leq k\int_{0}^{t}	\int_{0}^{1} \int_{1}^\infty \chi_{Y(s;t,Z)}(v_{1}+v_{2}) v_{1}^{-\beta}v_{2} c_n(s,v_{1})c_n(s,v_{2}) dv_{2}dv_{1}ds \\
	&\leq k\int_{0}^{t}	\int_{1}^{\infty} v_{2} c_n(s,v_2) \int_{0}^{1} v_1^{-\beta} \chi_{-v_{2}+Y(s;t,Z)}(v_{1}) c_n(s,v_{1}) dv_{1}dv_{2}ds \\
	&\leq  C(T) \int_{0}^{t} \mathcal{E}_{n,\delta}(s)ds,
\end{align*}
and
\begin{align*}
	W_4 &\le k\int_{0}^{t}	\int_{1}^\infty \int_{1}^\infty \big( v_{1} + v_{2} \big)^{1-\beta} \chi_{Y(s;t,Z)}(v_{1}+v_{2}) c_n(s,v_{1})c_n(s,v_{2}) dv_{2}dv_{1}ds \\
	& \le k\int_{0}^{t}	\int_{1}^\infty \int_{1}^\infty \big( v_{1} v_{2}^{-\beta} + v_{1}^{-\beta} v_{2} \big) \chi_{Y(s;t,Z)}(v_{1}+v_{2}) c_n(s,v_{1})c_n(s,v_{2}) dv_{2}dv_{1}ds \\
	& \le 2k \int_{0}^{t}	\int_{1}^\infty v_1 c_n(s,v_1) \int_{1}^\infty v_{2}^{-\beta} \chi_{-v_1+Y(s;t,Z)}(v_{2}) c_n(s,v_{1}) dv_{2}dv_{1}ds \\
	&\leq  C(T) \int_{0}^{t} \mathcal{E}_{n,\delta}(s)ds. 
\end{align*}
From the estimates of $W_1$, $W_2$, $W_3$, and $W_4$, we infer that
\begin{equation}
	I_1\leq  C(T) \int_{0}^{t} \mathcal{E}_{n,\delta}(s)ds. \label{uniform integrability2}
\end{equation}

Next, by~\eqref{characteristic-4} and~\eqref{inequality-13},  
\begin{align*}
\int_{Z} v^{-\beta} c_0(Y(0;t,v))J(0;t,v)dv  &= \int_{Y(0;t,Z)} Y^{-\beta}(t;0,v) c_0(v)dv \\
&\leq \int_{Y(0;t,Z)} v^{-\beta } c_0(v)dv,
\end{align*}
and it follows from~\eqref{C} that
\begin{equation}
	\int_{Z} v^{-\beta} c_0(Y(0;t,v))J(0;t,v)dv \leq \sum_{l=1}^{C_p} \int_{Z_l^{0,t}} v^{-\beta } c_0(v)dv \leq C_p\mathcal{E}_{\delta}(0), \label{uniform integrability3} 
\end{equation}
where
\begin{equation*}
	\mathcal{E}_{\delta}(0):=	\sup  \left\{\!\begin{aligned}	\int_{0}^{\infty} \chi_{Z} (v) v^{-\beta}c_{0}(v)dv, Z \in \mathcal{M} \;\text{with} \; |Z|\leq \delta \end{aligned}\right\}.  \nonumber
\end{equation*}
Using~\eqref{uniform integrability2} and~\eqref{uniform integrability3} in~\eqref{uniform integrability1}, we get
\begin{align}
	\int_{Z} v^{-\beta} c^n(t,v) dv	\leq C_p \mathcal{E}_{\delta}(0)+ C(T) \int_{0}^{t} \mathcal{E}_{n,\delta}(s)ds. \nonumber
\end{align}
Taking the supremum on both sides over the  set $Z\in \mathcal{M}$ with $|Z|\leq \delta$, we have
\begin{equation*}
\mathcal{E}_{n,\delta}(t) \leq C_p \mathcal{E}_{\delta}(0)+ C(T) \int_{0}^{t} \mathcal{E}_{n,\delta}(s)ds, \quad t\in [0,T]. 
\end{equation*}
By Gronwall's inequality, we get
\begin{equation}
	\mathcal{E}_{n,\delta}(t) \leq C(T) \mathcal{E}_{\delta}(0), \quad t\in [0,T]. \label{del1}  
\end{equation}
Now, by the absolute continuity of the Lebesgue integral, for a given $\epsilon > 0$, we can find $\delta_\epsilon >0$ such that
\begin{equation}
	\mathcal{E}_{\delta_\epsilon}(0) \leq \frac{\epsilon}{ C(T)}. \label{del2}
\end{equation}
From~\eqref{del1} and~\eqref{del2}, we get
\begin{equation*}
	\mathcal{E}_{n,\delta_\epsilon}(t) \leq \epsilon, \quad t\in [0,T], 
\end{equation*}
for all $n\ge 1$, and thus complete the proof of \Cref{uniform integrability proposition}.
\end{proof}

 From \Cref{uniform integrability proposition}, \Cref{Tail-2}, and the Dunford-Pettis theorem, we infer that
\begin{align}
	\begin{cases}~&{ (c_n(t))_{n\geq 1} \text{~~~is ~~weakly ~~compact ~~in}} \\
		&{ ~~ L^1(0,\infty)~~\text{ for~~ each }~~t \in [0,T].} \end{cases}	\label{weakly compact}	
\end{align}

%%%%%%%%%%%%%%%%
%%%%%%%%%%%%%%%%
%\subsection {Equicontinuity in Time}
%%%%%%%%%%%%%%%%
%%%%%%%%%%%%%%%%

We now proceed to prove that $(c_n)_{n\geq 1}$ is weakly equicontinuous with respect to $t$ in $ L^1(0,\infty)$.

%%%%%%%%%%%%%%%%
\begin{lemma} \label{time equicontinuity}
	For every $T>0$ and $\phi \in L^{\infty}(0,\infty)$, the following result is true.
\begin{equation}
	\lim_{h\rightarrow 0} \sup_{t\in [0,T-h]}	\sup_{n\geq 1}	\left|\int_{0}^{\infty}  (c_n(t+h,v)-c_n(t,v))\phi(v)dv \right| =0.\label{equicontinuity-0}
\end{equation}
\end{lemma}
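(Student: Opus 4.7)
The plan is to prove \eqref{equicontinuity-0} in two stages: first for smooth test functions $\phi \in W^{1,\infty}(0,\infty)$ using the weak formulation~\eqref{truncated weak formulation}, and then for general $\phi \in L^\infty(0,\infty)$ via an approximation argument that exploits the uniform integrability obtained in \Cref{uniform integrability proposition} and \Cref{Tail-2}.

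For the first stage, given $\phi\in W^{1,\infty}(0,\infty)$, subtracting~\eqref{truncated weak formulation} at times $t$ and $t+h$ expresses the difference $\int (c_n(t+h)-c_n(t))\phi\,dv$ as the integral over $[t,t+h]$ of two contributions: a transport term $\int \phi'(v) g(s,v) c_n(s,v)\,dv$ and a coagulation term $\frac{1}{2}\iint \tilde\phi(v_1,v_2)K_n(v_1,v_2)c_n(s,v_1)c_n(s,v_2)\,dv_1 dv_2$. The transport term is bounded using $|g(s,v)|\le Av$ from~\eqref{characteristic-1} together with $M_1(c_n(s))\le C(T)$ from \Cref{Global prop-1}, giving a bound $\le A\|\phi'\|_\infty C(T)h$. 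For the coagulation term, the pointwise bound $|\tilde\phi|\le 3\|\phi\|_\infty$ combined with the uniform estimate $K(v_1,v_2)\le C(v_1^{-\beta}+v_1)(v_2^{-\beta}+v_2)$ (readily deducible from~\eqref{coagassum}) leads to an estimate in terms of $(M_{-\beta}(c_n(s))+M_1(c_n(s)))^2$; the moment $M_{-\beta}$ is controlled by Cauchy--Schwarz from $M_{-2\beta}$ and $M_0$ via \Cref{-2 beta moment bound-0}, so this term is $\le C(T)\|\phi\|_\infty h$. Altogether, for $\phi \in W^{1,\infty}$, one obtains the explicit uniform bound
\begin{equation*}
\sup_{n\ge 1}\sup_{t\in[0,T-h]}\left|\int_0^\infty (c_n(t+h,v)-c_n(t,v))\phi(v)\,dv\right| \le C(T)(\|\phi\|_\infty+\|\phi'\|_\infty)h,
\end{equation*}
which clearly tends to $0$ as $h\to 0$.

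The second stage extends this to $\phi\in L^\infty(0,\infty)$. Given $\varepsilon>0$, first choose $R$ large enough so that $\int_R^\infty c_n(t,v)\,dv\le C(T)/R \le \varepsilon$ uniformly in $n$ and $t\in[0,T]$, using $M_1(c_n(t))\le C(T)$. Next, note that the family $\{c_n(t): n\ge 1,\ t\in[0,T]\}$ is uniformly integrable in $L^1(0,R)$: the behaviour near $0$ is controlled by $\int_0^\delta c_n\le \delta^{2\beta}M_{-2\beta}(c_0)$ via \Cref{-2 beta moment bound-0}, while on $[\delta,R]$ the uniform integrability of $v^{-\beta}c_n$ from \Cref{uniform integrability proposition} gives $\int_Z c_n\le R^\beta \int_Z v^{-\beta}c_n$ small for $|Z|$ small. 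This uniform modulus of integrability allows one to construct, for the prescribed $\varepsilon$, an approximation $\phi_\varepsilon\in W^{1,\infty}(0,\infty)$ compactly supported in $[0,R+1]$ with $\|\phi_\varepsilon\|_\infty\le\|\phi\|_\infty$ and $\phi_\varepsilon = \phi$ outside a set of small Lebesgue measure in $(0,R)$ (obtained e.g.\ by truncation at $R$ followed by mollification, or by Lusin plus piecewise linear interpolation).

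Splitting $\phi = \phi_\varepsilon + (\phi-\phi_\varepsilon)$, the contribution of $\phi - \phi_\varepsilon$ to the integral in~\eqref{equicontinuity-0} is controlled uniformly in $n$, $t$, and $h$ by the tail estimate on $(R,\infty)$ and the equi-integrability on $(0,R+1)$, yielding a bound $\lesssim \varepsilon$; the remaining piece involving $\phi_\varepsilon$ is handled by the first stage and tends to $0$ as $h\to 0$ for fixed $\varepsilon$. Letting first $h\to 0$ and then $\varepsilon\to 0$ yields~\eqref{equicontinuity-0}. I expect the main subtlety to lie in this approximation step: the direct bound from the weak formulation unavoidably involves $\|\phi'\|_\infty$, so the whole argument hinges on being able to replace an arbitrary bounded $\phi$ by a Lipschitz approximation with controlled error against \emph{every} $c_n(t)$ simultaneously, which is exactly what the uniform integrability from \Cref{uniform integrability proposition} and the tail bound from \Cref{Tail-2} provide.
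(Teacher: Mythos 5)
Your proof is correct and follows essentially the same route as the paper: the weak formulation~\eqref{truncated weak formulation} together with the moment bounds of \Cref{Global prop-1} and \Cref{-2 beta moment bound-0} yields the uniform $O(h)$ bound for Lipschitz test functions (your product bound on $K$ replacing the paper's four-region splitting is only a cosmetic difference), and your second stage is precisely the density argument the paper delegates to \cite[Lemma~4.5]{Laurencot2001}, based on the uniform integrability of \Cref{uniform integrability proposition} and the tail control from $M_1(c_n(t))\le C(T)$. You have simply written out in detail the approximation step (Lusin plus Lipschitz interpolation) that the paper cites rather than reproduces.
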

%%%%%%%%%%%%%%%%

\begin{proof}
	Let $T>0$, $\phi \in  C_{c}^{1}(0,\infty)$, $n\geq 1$, $h\in (0,T)$, and $t\in (0,T-h)$. By~\eqref{truncated weak formulation}, we have
\begin{align}\label{EQN-1}
	&\left|\int_{0}^{\infty}(c_n(t+h,v)-c_n(t,v))\phi(v)dv\right|\nonumber\\
	& \quad	 \leq \|\phi^{\prime}\|_{L^{\infty}(0,\infty)} \int_{t}^{t+h} \int_{0}^{\infty} g(t,v) c_n(s,v) dvds\nonumber  \\ 
	& \qquad + \frac{1}{2} \int_{t}^{t+h} \int_{0}^{\infty} \int_{0}^{\infty} \left|\tilde{\phi}(v_{1},v_{2})\right| K_n(v_{1},v_{2}) c_n(s,v_{1}) c_n(s,v_{2}) dv_{1}dv_{2}ds,   
\end{align}
with ~~$\tilde{\phi}(v_{1},v_{2})=\phi(v_{1}+v_{2})-\phi(v_{1})-\phi(v_{2})$.

First, by~\eqref{characteristic-1} and \Cref{Global prop-1}, we get
\begin{align}
	\int_{t}^{t+h} \int_{0}^{\infty} g(s,v) c_n(s,v)dvds\leq C(T)h. \label{EQN-2}
\end{align}
Let us now estimate the second term on the right-hand side of \eqref{EQN-1}:
\begin{align} 
	&\frac{1}{2} \int_{t}^{t+h} \int_{0}^{\infty} \int_{0}^{\infty} \left|\tilde{\phi}(v_{1},v_{2})\right| K_n(v_{1},v_{2}) c_n(s,v_{1}) c_n(s,v_{2}) dv_{1}dv_{2}ds \nonumber\\  
	&\leq \frac{3}{2} \|\phi\|_{L^{\infty}(0,\infty)} \int_{t}^{t+h} \int_{0}^{\infty} \int_{0}^{\infty} K_n(v_{1},v_{2}) c_n(s,v_{1}) c_n(s,v_{2}) dv_{1}dv_{2}ds. \label{equicontinuity1}
\end{align}
In order to further estimate the inequality \eqref{equicontinuity1}, let us consider
\begin{equation*}
	\int_{0}^{\infty}\int_{0}^{\infty}K_n(v_{1},v_{2})c_n(s,v_{1})c_n(s,v_{2}) dv_{1}dv_{2}ds= \sum_{i=1}^{4} \widehat{W}_{i} , 
\end{equation*}
where
\begin{equation*}
	\widehat{W}_1 := \int_{0}^{1}\int_{0}^{1}K_n(v_{1},v_{2})c_n(s,v_{1})c_n(s,v_{2}) dv_{1}dv_{2},   
\end{equation*}
\begin{equation*}
	\widehat{W}_2 :=\int_{0}^{1}\int_{1}^{\infty}K_n(v_{1},v_{2})c_n(s,v_{1})c_n(s,v_{2}) dv_{1}dv_{2},   
\end{equation*}
\begin{equation*}
	\widehat{W}_3 :=\int_{1}^{\infty}\int_{0}^{1}K_n(v_{1},v_{2})c_n(s,v_{1})c_n(s,v_{2}) dv_{1}dv_{2} = \widehat{W}_2, 
\end{equation*}
and
\begin{equation*}
	\widehat{W}_4 :=\int_{1}^{\infty}\int_{1}^{\infty}K_n(v_{1},v_{2})c_n(s,v_{1})c_n(s,v_{2}) dv_{1}dv_{2}.
\end{equation*}
With the help of definition of $K_{n}$, \eqref{coagassum}  and \Cref{Global prop-1}, $\widehat{W}_{1}$ can be estimated as
\begin{align*}
	\widehat{W}_{1} &\leq k\int_{0}^{1}\int_{0}^{1}v_{1}^{-\beta}v_{2}^{-\beta}c_n(s,v_{1})c_n(s,v_{2}) dv_{1}dv_{2} \\
	&\leq k \left(\int_{0}^{\infty}v_{1}^{-2\beta}c_n(s,v_{1})dv_{1}\right) \left(\int_{0}^{\infty}v_{2}^{-2\beta}c_n(s,v_{2}) dv_{2}\right) \leq k M_{-2\beta}^{2}(c_{0}). \nonumber
\end{align*}
Similarly
\begin{equation*}
\widehat{W}_{2} =\widehat{W}_3 \leq k\left(\int_{0}^{\infty}v_{2}^{-2\beta}c_n(s,v_{2})dv_{2}\right)\left(\int_{0}^{\infty}v_{1}c_n(s,v_{1}) dv_{1}\right)\leq k e^{AT}M_{-2\beta}(c_{0}) M_{1}(c_{0}), \nonumber
\end{equation*}
and
\begin{align*}
		\widehat{W}_{4}=&\int_{1}^{\infty}\int_{1}^{\infty}K_n(v_{1},v_{2})c_n(s,v_{1})c_n(s,v_{2}) dv_{1}dv_{2} \\
	&\leq 2k \left(\int_{0}^{\infty}v_{1}c_n(s,v_{1})dv_{1}\right)\left(\int_{0}^{\infty}c_n(s,v_{2}) dv_{2}\right)\leq 2ke^{AT}M_{1}(c_{0})M_{0}(c_{0}). 
\end{align*}
From the estimates on $\left\{ \widehat{W}_{i}\right\}_{1\le i\le 4}$, we infer that
\begin{equation}\label{equicontinuity2}
	\int_{0}^{\infty}\int_{0}^{\infty}K_n(v_{1},v_{2})c_n(s,v_{1})c_n(s,v_{2}) dv_{1}dv_{2} \leq C(T), \quad s\in [0,T].
\end{equation}
Inserting~\eqref{equicontinuity2} into~\eqref{equicontinuity1}, we obtain
\begin{align}
	&\frac{1}{2} \int_{t}^{t+h} \int_{0}^{\infty} \int_{0}^{\infty} \left|\tilde{\phi}(v_{1},v_{2})\right| K_n(v_{1},v_{2}) c_n(s,v_{1}) c_n(s,v_{2}) dv_{1}dv_{2}ds \leq  C(T) \|\phi\|_{L^{\infty}(0,\infty)}h. \label{equicontinuity3}
\end{align}
Gathering~\eqref{EQN-1}, \eqref{EQN-2} and~\eqref{equicontinuity3}, we end up with 
\begin{equation*}
	\left|\int_{0}^{\infty}  (c_n(t+h,v)-c_n(t,v))\phi(v)dv\right|  \leq C(T) \|\phi\|_{W^{1,\infty}(0,\infty)}h
\end{equation*}
for all $h\in (0,T)$, $t\in [0,T-h]$, and $n\ge 1$. Therefore, \eqref{equicontinuity-0} is true for every  $\phi \in  C^{1}_c(0,\infty)$. We next use a density argument as in the proof of \cite[Lemma~4.5]{Laurencot2001} to extend the validity of~\eqref{equicontinuity-0} to arbitrary functions in $L^\infty(0,\infty)$.
\end{proof}

Thus, from \Cref{time equicontinuity}, we infer  that
\begin{align}
	\begin{cases}~&{ (c_n)_{n\geq 1} \text{~~~is ~~weakly ~~ equicontinuous~~in } L^1(0,\infty)} \\
		&{ \text{ for~~ each }~~t \in [0,T]}\;  ,\end{cases}\label{weakly equicontinuous}		
\end{align}
see \cite[Definition~1.3.1]{Vrabie2003}. Due to~\eqref{weakly compact} and~\eqref{weakly equicontinuous}, we may apply a variant of the Arzel\'{a}-Ascoli theorem  \cite[Theorem~1.3.2]{Vrabie2003} to deduce that the sequence $(c_n)$ is relatively compact in $C([0,T];w-L^1(0,\infty))$ for any $T>0$. Therefore, by a diagonal argument, we conclude that there exists a subsequence $(c_n)$ (not relabeled) which converges to some limit function $c$ in  $C\left([0,\infty);w-L^1(0,\infty)\right)$ in the following sense:
\begin{align}\label{convergence-3}
\lim_{n\rightarrow \infty} \sup_{t\in [0,T]}\left| \int_{0}^{\infty}(c_n(t,v)-c(t,v))\phi(v) dv\right|=0
\end{align}
for each $T\in (0,\infty)$ and $\phi \in L^{\infty}(0,\infty)$.

We shall improve this convergence in the next lemma but first combine \Cref{Global prop-1} and the convergence~\eqref{convergence-3} to derive additional properties on $c$. First, $c$ is clearly non-negative due to~\eqref{convergence-3} and the non-negativity of $c_n$ for all $n\ge 1$. Next, for $m\in [-2\beta,1]$, we infer from \Cref{Global prop-1} and~\eqref{convergence-3} that, for $T>0$, $t\in [0,T]$, and $R>1$,
\begin{equation*}
	\int_{\frac{1}{R}}^{R} v^m c(t,v) dv = \lim_{n\to\infty} \int_{\frac{1}{R}}^{R} v^m c(t,v) dv \le C(T). 
\end{equation*}
We let $R\to \infty$ and use Fatou's lemma to conclude that
\begin{equation}
	\sup_{t\in [0,T]} M_m(c(t)) \le C(T), \qquad m\in [-2\beta,1]. \label{M}
\end{equation}
Similarly, we infer from \Cref{Tail-2} and~\eqref{convergence-3} that
\begin{equation}
	\lim_{R\to\infty} \sup_{t\in [0,T]} \int_R^\infty v c(t,v) dv = 0. \label{T}
\end{equation}

%%%%%%%%%%%%%%%%
\begin{lemma}\label{improved convergence}
For $-2\beta< m\leq 1$ and $T>0$, $(c_{n})_{n\ge 1}$ converges to $c$ in  $C\big([0,T];w-L_m^1(0,\infty)\big)$.
\end{lemma}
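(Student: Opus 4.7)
Fix $T>0$, $m\in (-2\beta,1]$, and $\phi\in L^{\infty}(0,\infty)$. Recalling that $L_m^1(0,\infty)=L^1(0,\infty;2v^m\,dv)$, convergence in $C([0,T];w-L_m^1(0,\infty))$ amounts to showing
\begin{equation*}
\lim_{n\to\infty}\sup_{t\in[0,T]}\left|\int_{0}^{\infty}\bigl(c_n(t,v)-c(t,v)\bigr)\phi(v)\,v^m\,dv\right|=0.
\end{equation*}
The plan is to split the integral at two cutoffs $1/R$ and $R$ (with $R>1$ to be chosen), exploit the already available weak convergence~\eqref{convergence-3} on the bounded middle region $(1/R,R)$, and absorb the two tails using the $-2\beta$-moment control at the origin and the $1$-moment tail control at infinity.

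For the small-size piece, since $m+2\beta>0$, we write $v^m=v^{-2\beta}\,v^{m+2\beta}\le R^{-(m+2\beta)}\,v^{-2\beta}$ for $v\in(0,1/R)$, whence
\begin{equation*}
\left|\int_{0}^{1/R}\bigl(c_n-c\bigr)(t,v)\phi(v)\,v^m\,dv\right|\le \|\phi\|_{L^\infty}\,R^{-(m+2\beta)}\bigl(M_{-2\beta}(c_n(t))+M_{-2\beta}(c(t))\bigr),
\end{equation*}
which is controlled uniformly in $n\ge 1$ and $t\in[0,T]$ by \Cref{-2 beta moment bound-0} for $c_n$ and by~\eqref{M} (with $m=-2\beta$) for $c$. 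This bound tends to zero as $R\to\infty$. For the large-size piece, since $m\le 1$ we have $v^m\le v$ for $v\ge R\ge 1$, so
\begin{equation*}
\left|\int_{R}^{\infty}\bigl(c_n-c\bigr)(t,v)\phi(v)\,v^m\,dv\right|\le \|\phi\|_{L^\infty}\left(\int_R^\infty v\,c_n(t,v)\,dv+\int_R^\infty v\,c(t,v)\,dv\right),
\end{equation*}
and the right-hand side tends to zero as $R\to\infty$, uniformly in $n\ge 1$ and $t\in[0,T]$, by \Cref{Tail-2} for $c_n$ and by~\eqref{T} for $c$.

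Given $\varepsilon>0$, these two estimates allow us to select $R=R(\varepsilon)>1$ large enough so that the tail contributions are each at most $\varepsilon/3$, simultaneously for all $n\ge 1$ and all $t\in[0,T]$. For this fixed $R$, the function $\psi_R(v):=v^m\phi(v)\chi_{(1/R,R)}(v)$ belongs to $L^{\infty}(0,\infty)$ (with $\|\psi_R\|_{L^\infty}\le \max\{R^m,R^{-m}\}\|\phi\|_{L^\infty}$), so the already established convergence~\eqref{convergence-3} applied with test function $\psi_R$ yields
\begin{equation*}
\sup_{t\in[0,T]}\left|\int_{1/R}^{R}\bigl(c_n-c\bigr)(t,v)\phi(v)\,v^m\,dv\right|<\varepsilon/3
\end{equation*}
for all $n$ sufficiently large. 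Combining the three pieces gives the desired uniform-in-$t$ convergence.

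There is no genuine obstacle here: once the three-region split is in place, the argument is dictated by the a priori bounds already proved, and the only point requiring mild care is the endpoint analysis, namely that $m+2\beta>0$ (to make the factor $v^{m+2\beta}$ beneficial near zero) and $m\le 1$ (to dominate $v^m$ by $v$ at infinity), which is exactly the admissible range stated in the lemma.
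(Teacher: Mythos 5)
Your proof is correct and follows essentially the same route as the paper: a three-region split at $1/R$ and $R$, with the small-size tail controlled by the $-2\beta$-moment bounds (using $m+2\beta>0$), the large-size tail by \Cref{Tail-2} and~\eqref{T} (using $m\le 1$), and the middle region handled by applying~\eqref{convergence-3} to the bounded test function $v^m\phi(v)\chi_{(1/R,R)}(v)$. The only cosmetic difference is the order of limits (you fix $R$ via $\varepsilon$ before sending $n\to\infty$, while the paper takes $\limsup_{n\to\infty}$ first and then $R\to\infty$), which is immaterial.
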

%%%%%%%%%%%%%%%%

\begin{proof}
Let $T>0$, $t\in [0,T]$, and $\phi\in L^\infty(0,\infty)$. For $R\geq 1$, let us consider the following term
	\begin{align}
		&\left| \int_{0}^{\infty} v^m \phi(v)\left(c_n(t,v)-c(t,v)\right) dv\right| \nonumber \\
		&\leq  \int_{0}^{\frac{1}{R}} v^{m+2\beta} v^{-2\beta}|\phi(v)|\left(c_n(t,v)+c(t,v)\right) dv \nonumber \\
		\quad &+ \left| \int_{\frac{1}{R}}^{R} v^m \phi(v)\left(c_n(t,v)-c(t,v)\right) dv\right| \nonumber \\
		\quad &+ \int_{R}^{\infty} v |\phi(v)|(c_n(t,v)+c(t,v)) dv. \nonumber 
	\end{align}
By \Cref{Global prop-1} and~\eqref{M}, we have
	\begin{align*}
	&\left| \int_{0}^{\infty} v^m \phi(v)\left(c_n(t,v)-c(t,v)\right) dv\right|  \\
	&\leq \left| \int_{\frac{1}{R}}^{R} v^m \phi(v)\left(c_n(t,v)-c(t,v)\right) dv\right| + C(T) \left(\frac{1}{R}\right)^{m+2\beta} \|\phi\|_{L^{\infty}(0,\infty)}  \\
	 &\quad  + \left[ \int_R^\infty v c(t,v) dv + \sup_{n\ge 1} \int_R^\infty v c_n(t,v) dv \right]  \|\phi\|_{L^{\infty}(0,\infty)}. 
	\end{align*}
Taking the supremum with respect to $t$ on both sides over $[0,T]$, we get
\begin{align*}
 &\sup_{t\in [0,T]} \left| \int_{0}^{\infty} v^m \phi(v)\left(c_n(t,v)-c(t,v)\right) dv\right|  \\
 &\leq	\sup_{t\in [0,T]} \left| \int_{\frac{1}{R}}^{R} v^m \phi(v)\left(c_n(t,v)-c(t,v)\right) dv\right| + C(T) \left(\frac{1}{R}\right)^{m+2\beta} \|\phi\|_{L^{\infty}(0,\infty)}  \\
&\quad  + \left[ \sup_{t\in [0,T]} \int_R^\infty v c(t,v) dv + \sup_{n\ge 1} \sup_{t\in [0,T]} \int_R^\infty v c_n(t,v) dv \right]  \|\phi\|_{L^{\infty}(0,\infty)}. 
\end{align*}
Now taking the limit $n\rightarrow \infty$ and applying \eqref{convergence-3} yield
\begin{align*}
&\limsup_{n\rightarrow \infty} \sup_{t\in [0,T]} \left| \int_{0}^{\infty} v^m \phi(v)\left(c_n(t,v)-c(t,v)\right) dv\right|  \\
 &\leq	 C(T) \left(\frac{1}{R}\right)^{m+2\beta} \|\phi\|_{L^{\infty}(0,\infty)}  \\
 &\quad  + \left[ \sup_{t\in [0,T]} \int_R^\infty v c(t,v) dv + \sup_{n\ge 1} \sup_{t\in [0,T]} \int_R^\infty v c_n(t,v) dv \right]  \|\phi\|_{L^{\infty}(0,\infty)}. 
\end{align*}
Next, letting $R\rightarrow \infty$ and using \Cref{Tail-2} and~\eqref{T}, we obtain
 \begin{equation*}
 	\lim_{n\rightarrow \infty} \sup_{t\in [0,T]} \left| \int_{0}^{\infty} v^m \phi(v)\left(c_n(t,v)-c(t,v)\right) dv\right|=0,
 \end{equation*}
and thus complete the proof.
\end{proof}

At this stage, we are prepared to conclude the proof of \Cref{main theorem}. 

\begin{proof}[Proof of \Cref{main theorem}]
Let $T>0$. We have already shown in~\eqref{M} that $c\in L^\infty(0,T;L_m^1(0,\infty))$ for all $m\in [-2\beta,1]$, while \Cref{improved convergence} ensures that $c\in C([0,T];L_m^1(0,\infty))$ for all $m\in (-2\beta,1]$. Finally, the assumptions~\eqref{coagassum} and~\eqref{growthassum}, along with the convergences stated in \Cref{improved convergence}, allows us to use classical arguments, see \cite{BLL_book, Stewart1989} for instance, to pass to the limit as $n\to \infty$ in~\eqref{truncated weak formulation} and obtain that $c$ satisfies~\eqref{weak formulation}. 
\end{proof}

%%%%%%%%%%%%%%%%
\begin{corollary}\label{existenceofsumkernel}
		If all the assumptions made in \Cref{uniqueness theorem} are satisfied, then there exists at least one global weak solution $c$ to~\eqref{eq:1.3}--\eqref{eq:1.4}  in the sense of  \Cref{definitionofsolution} such that \eqref{solsecondmomentbound} holds true.
\end{corollary}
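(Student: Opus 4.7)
The strategy is to revisit the approximation scheme underlying the proof of \Cref{main theorem} and exploit the improved initial integrability to upgrade the moment class of the limit. Since $L^1_{-2\beta,2,+}(0,\infty)\hookrightarrow L^1_{-2\beta,1,+}(0,\infty)$, the initial datum $c_0$ satisfies~\eqref{initialassum}, so \Cref{main theorem} already supplies a global weak solution $c$ obtained as a cluster point of the truncated mild solutions $(c_n)_{n\ge 1}$ provided by \Cref{fixed point theorem-2}, with the convergence $c_n\to c$ in $C([0,T];w\text{-}L^1_m(0,\infty))$ for every $m\in(-2\beta,1]$ established in \Cref{improved convergence}. The only missing piece is to check that this $c$ inherits the second moment bound~\eqref{solsecondmomentbound}.

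The key additional input is \Cref{second moment lemma-1}: since $c_0\in L^1_2(0,\infty)$ now, the corollary yields a constant $C_{sm}(T)$ such that $M_2(c_n(t))\le C_{sm}(T)$ for all $n\ge 1$ and $t\in[0,T]$. The mild obstacle is that $v\mapsto v^2$ is not bounded on $(0,\infty)$, so the weak convergence from \Cref{improved convergence} cannot be applied directly against the weight $v^2$. I would bypass this by truncation: for every $R>1$, the function $v\mapsto v^2\chi_{(0,R)}(v)$ lies in $L^\infty(0,\infty)$, so \Cref{improved convergence} (applied with $m=1$ and $\phi(v)=v\chi_{(0,R)}(v)$, or directly through the weak convergence in $L^1$ of $c_n(t)$) gives
\begin{equation*}
\int_0^R v^2 c(t,v)\,dv = \lim_{n\to\infty}\int_0^R v^2 c_n(t,v)\,dv \le C_{sm}(T)
\end{equation*}
for every $t\in[0,T]$. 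Letting $R\to\infty$ and invoking the monotone convergence theorem transfers the bound to $c$, yielding $M_2(c(t))\le C_{sm}(T)$ uniformly on $[0,T]$.

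Combined with the bound $c\in L^\infty(0,T;L^1_{-2\beta,1}(0,\infty))$ already furnished by \Cref{main theorem}, this establishes $c\in L^\infty(0,T;L^1_{-2\beta,2}(0,\infty))$, which is precisely~\eqref{solsecondmomentbound}. No genuine obstacle arises: once the uniform second moment estimate of \Cref{second moment lemma-1} is placed alongside the weak-$L^1$ convergence of \Cref{improved convergence}, the corollary reduces to the standard lower semicontinuity argument used above.
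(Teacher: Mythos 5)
Your proposal is correct and follows essentially the same route as the paper: the paper simply cites \Cref{main theorem} together with \Cref{second moment lemma-1}, leaving implicit exactly the truncation/lower-semicontinuity step you spell out to transfer the uniform bound $M_2(c_n(t))\le C_{sm}(T)$ to the limit $c$. Your explicit argument (testing against $v^2\chi_{(0,R)}$, using \Cref{improved convergence}, then letting $R\to\infty$) is a valid filling-in of that detail and mirrors the derivation of~\eqref{M} in the paper.
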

%%%%%%%%%%%%%%%%

\begin{proof}
	Existence of at least one weak solution $c$ satisfying~\eqref{solsecondmomentbound} follows directly from  \Cref{main theorem} and \Cref{second moment lemma-1}. 
\end{proof}

%%%%%%%%%%%%%%%%
%%%%%%%%%%%%%%%%
\section{Uniqueness}
%%%%%%%%%%%%%%%%
%%%%%%%%%%%%%%%%

This section is devoted to the proof of \Cref{uniqueness theorem}. Since the existence of at least one weak solution to~\eqref{eq:1.3}--\eqref{eq:1.4} as stated in \Cref{uniqueness theorem} follows from \Cref{existenceofsumkernel}, we are left with the uniqueness issue. In the subsequent study, we use \emph{solution to \eqref{eq:1.3}--\eqref{eq:1.4}} to mean the \emph{solution in the sense of \Cref{uniqueness theorem}}. We now focus on the key element of the uniqueness result, as presented in \Cref{uniqueness theorem}. The uniqueness result of \Cref{uniqueness theorem} is a consequence of the following continuous dependence result.

%%%%%%%%%%%%%%%%
\begin{proposition}\label{continuousdependence result} 
Assume that $K$ and $g$ satisfy~\eqref{coagassum} and~\eqref{growthassum}. Let $c_{1,0}$ and $c_{2,0}$ be two non-negative functions in $L_{-2\beta,2}^1(0,\infty)$. If $T>0$ and $c_1$ and $c_2$ are two weak solutions to \eqref{eq:1.3}--\eqref{eq:1.4} on $[0,T]$ with respective initial conditions $c_{1,0}$ and $c_{2,0}$ such that
\begin{equation}
	c_i \in L^\infty(0,T;L_{-2\beta,2}^1(0,\infty)), \quad i=1,2, \label{z014}
\end{equation}
then there exists $C(T)>0$ depending only on $k$, $A$, and the $L^\infty(0,T;L_{-2\beta,2}^1(0,\infty))$-norms of $c_1$ and $c_2$ such that
\begin{equation*}%\label{continuousdependence}
	\|c_{1}(t)-c_{2}(t)\|_{L_{-\beta,1}^{1}(0,\infty)}\leq C(T)	\|c_{1,0}-c_{2,0}\|_{L_{-\beta,1}^{1}(0,\infty)}, \quad t\in [0,T].
\end{equation*}
\end{proposition}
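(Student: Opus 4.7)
The plan is to derive a Gronwall-type differential inequality for
\begin{equation*}
  N(t) := \|c_1(t)-c_2(t)\|_{L_{-\beta,1}^1(0,\infty)} = \int_0^\infty w(v)\,|c_1(t,v)-c_2(t,v)|\,dv,
\end{equation*}
where $w(v) := v^{-\beta}+v$ is the weight already singled out in the paragraph preceding the proposition. Two structural properties of $w$ drive the argument: first, $w$ is \emph{subadditive}, $w(v_1+v_2) \le w(v_1)+w(v_2)$, since $v\mapsto v^{-\beta}$ is decreasing (hence subadditive) and $v\mapsto v$ is additive; second, a case-by-case inspection of~\eqref{coagassum} produces a constant $\kappa=\kappa(k)>0$ such that $K(v_1,v_2) \le \kappa\, w(v_1)\, w(v_2)$ on $(0,\infty)^2$ (for instance on $(1,\infty)^2$ one uses $v_1+v_2\le 2v_1v_2\le 2w(v_1)w(v_2)$). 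These are exactly what make $L_{-\beta,1}^1$ the right space.

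Setting $u := c_1-c_2$, the function $u$ weakly solves $\partial_t u + \partial_v(gu) = Q(c_1)-Q(c_2)$. Since $g$ is globally Lipschitz with $g(t,0)=0$ by~\eqref{growthassum}, the DiPerna--Lions theory of renormalized solutions applies to this transport equation and yields
\begin{equation*}
  \partial_t|u| + \partial_v(g|u|) = \sigma\bigl[Q(c_1)-Q(c_2)\bigr], \qquad \sigma := \mathrm{sign}(u),
\end{equation*}
in the distributional sense. I then test against a smooth, subadditive truncation $w_R$ of $w$ (for instance $w_R(v) := (v+1/R)^{-\beta}+\min(v,R)$, which lies in $W^{1,\infty}(0,\infty)$ and increases pointwise to $w$ as $R\to\infty$), use the symmetry of $K$, and exploit the factorisation
\begin{equation*}
c_1(v_1)c_1(v_2)-c_2(v_1)c_2(v_2) = \tfrac{1}{2}\bigl[(c_1+c_2)(v_1)\,u(v_2)+u(v_1)\,(c_1+c_2)(v_2)\bigr].
\end{equation*}
After an integration by parts on the growth term (whose boundary contribution at $v=0$ vanishes thanks to $g(t,0)=0$), this gives an identity of the form $\int w_R|u(t)|dv = \int w_R|u(0)|dv + \int_0^t\int w_R' g |u|dv\,ds + \tfrac{1}{4}\int_0^t \mathcal{C}_R(s)ds$, with $\mathcal{C}_R$ collecting the coagulation contributions with test function $\sigma w_R$.

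The heart of the estimate is the pointwise bound
\begin{equation*}
  \widetilde{\sigma w_R}(v_1,v_2)\,(c_1+c_2)(v_1)\,u(v_2) \le \bigl[w_R(v_1+v_2)+w_R(v_1)-w_R(v_2)\bigr](c_1+c_2)(v_1)\,|u(v_2)|,
\end{equation*}
obtained from $\sigma(v_1+v_2)u(v_2)\le|u(v_2)|$, $-\sigma(v_1)u(v_2)\le|u(v_2)|$, and $-\sigma(v_2)u(v_2)=-|u(v_2)|$. Subadditivity of $w_R$ collapses the bracket to $2w_R(v_1)\le 2w(v_1)$; combined with $K\le\kappa\,w\otimes w$ and the symmetric bound for the $u(v_1)(c_1+c_2)(v_2)$ piece, this reduces $\mathcal{C}_R(s)$ to
\begin{equation*}
  \mathcal{C}_R(s) \le C \int_0^\infty w(v)^2\,(c_1+c_2)(s,v)\,dv \;\cdot\; N(s).
\end{equation*}
Since $w^2 = v^{-2\beta}+2v^{1-\beta}+v^2$ and $M_{1-\beta}$ is interpolated between $M_{-2\beta}$ and $M_2$, hypothesis~\eqref{z014} bounds the first factor uniformly for $s\in[0,T]$. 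For the growth integral, $w_R'(v)g(s,v)=\chi_{v\le R}\,g(s,v)-\beta(v+1/R)^{-\beta-1}g(s,v)$; the first piece is bounded by $Av|u|\le Aw(v)|u|$ and the second has the favourable sign and is discarded.

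Passing to the limit $R\to\infty$ by monotone convergence yields $N(t)\le N(0)+C(T)\int_0^t N(s)\,ds$ on $[0,T]$, and Gronwall's inequality concludes the proof. The most delicate step I anticipate is the rigorous justification of the renormalized equation for $|u|$: one must verify that the source $\sigma[Q(c_1)-Q(c_2)]$ lies in $L^1((0,T)\times(0,\infty))$---which follows from $K\le\kappa\,w\otimes w$ together with~\eqref{z014}---and that DiPerna--Lions applies despite the singularity of the weight at $v=0$, a difficulty sidestepped by working first with the truncation $w_R$ before letting $R\to\infty$.
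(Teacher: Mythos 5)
Your proposal is correct and follows essentially the same route as the paper: the weight $w(v)=v^{-\beta}+v$, the symmetric factorization of $Q(c_1)-Q(c_2)$, the sign-function inequality combined with (sub)additivity of the weight and the bound on $K$ from~\eqref{coagassum}, control of the resulting moments ($M_{-2\beta}$, $M_2$ and interpolated ones) via~\eqref{z014}, and Gronwall's inequality. The only difference is one of implementation: where you invoke DiPerna--Lions renormalization as a black box (correctly identifying the needed verifications, e.g.\ that $Q(c_1)-Q(c_2)$ is suitably integrable) and regularize the weight by the truncation $w_R$, the paper carries out this step explicitly with mollifiers, a smoothed modulus $\Sigma_\epsilon$, a smoothed weight $w_\epsilon$, and commutator estimates before passing to the limits $\delta\to0$, $\epsilon\to0$.
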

%%%%%%%%%%%%%%%%

We split the proof of \Cref{continuousdependence result} into several steps and first establish a differential inequality for a weight $L^1$-norm of $c_1-c_2$ in \Cref{lem.diffineq}. More precisely, introducing
\begin{align*}
	& E := c_1 - c_2,\quad E_{0} := E(0,\cdot) = c_{1,0} - c_{2,0}, \\
	& S := Q(c_{1}) - Q(c_{2}) \quad \text{and}\quad \nu(t):=\int_{0}^{\infty} w(v) |E(t,v)| dv, 
\end{align*}
for $(t,v)\in [0,T]\times (0,v)$, along with the weight
\begin{equation*}
	w(v) := v^{-\beta}+v, \quad v\in (0,\infty), %\label{weight}
\end{equation*}
we shall establish the following inequality.

%%%%%%%%%%%%%%%%
\begin{lemma}\label{lem.diffineq}
	There is $L_0>0$ depending only on $A$ such that
	\begin{equation}
		\nu(t) \le \nu(0) + L_0 \int_0^t \nu(s) ds + \int_0^t \int_0^\infty w(v) S(s,v)\, \mathrm{sign}(E(s,v)) dvds, \quad t\in [0,T]. \label{z001}
	\end{equation}
\end{lemma}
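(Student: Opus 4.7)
The plan is to derive the linear transport equation satisfied by $E=c_1-c_2$, renormalize it via DiPerna--Lions to obtain an equation for $|E|$, and then test against a truncation of the weight $w$.

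Subtracting the weak formulations of \Cref{definitionofsolution} for $c_1$ and $c_2$ shows that $E$ is a distributional solution to $\partial_t E + \partial_v(gE) = S$ on $(0,T)\times(0,\infty)$ with initial datum $E_0$. Since~\eqref{growthassum} and~\eqref{characteristic-2} guarantee that $g$ is continuous on $[0,T]\times[0,\infty)$ and Lipschitz in $v$ uniformly in $t$, the DiPerna--Lions theory is available. Mollifying $E$ in $v$ by a standard kernel $\rho_\eta$, using the commutator estimate to pass to the limit $\eta\to 0$, and observing that the defect term $(\beta'(u)u-\beta(u))\partial_v g$ vanishes for $\beta(u)=|u|$, one deduces that
\begin{equation*}
\partial_t |E| + \partial_v(g|E|) = \mathrm{sign}(E)\,S
\end{equation*}
in the sense of distributions on $(0,T)\times(0,\infty)$.

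The next step is to test this equation against the weight $w(v)=v^{-\beta}+v$. Since $w\notin W^{1,\infty}(0,\infty)$, I would take a smooth truncation $\chi_{\varepsilon,R}$ with $\chi_{\varepsilon,R}=1$ on $[\varepsilon,R]$ and $\mathrm{supp}\,\chi_{\varepsilon,R}\subset[\varepsilon/2,2R]$, apply the equation to the admissible test function $w\chi_{\varepsilon,R}\in W^{1,\infty}(0,\infty)$, and then pass to the limit $(\varepsilon,R)\to(0,\infty)$. The required integrability follows from the elementary bound $w(v)\le v^{-2\beta}+v^2$ combined with the hypothesis~\eqref{z014}, which gives $w|E|\in L^\infty(0,T;L^1(0,\infty))$; in particular, the boundary contribution near $v=0$ vanishes thanks to $g(t,0)=0$, and the contribution at $v=\infty$ vanishes by the integrability of $v^2(c_1+c_2)$. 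The $\partial_v$ integration by parts produces the flux term
\begin{equation*}
\int_0^t\int_0^\infty w'(v)\,g(s,v)\,|E(s,v)|\,dv\,ds,
\end{equation*}
and since $w'(v)=-\beta v^{-\beta-1}+1$ while $0\le g(s,v)\le Av$ by~\eqref{characteristic-1}, the pointwise bound $|w'(v)g(s,v)|\le A\beta v^{-\beta}+Av\le L_0 w(v)$ holds with $L_0:=A\max(\beta,1)$. Combining the three contributions yields~\eqref{z001}.

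The main obstacle I expect is making the DiPerna--Lions renormalization and the simultaneous $(\varepsilon,R)$-limit fully rigorous in the presence of the weight $v^{-\beta}$, because near $v=0$ the weight is unbounded and one must control both the commutator error and the non-admissible-test-function truncation by the same moment bound. The boundary behavior $g(t,0)=0$ together with the $L^\infty(0,T;L^1_{-2\beta,2})$-control from~\eqref{z014} are precisely what make both limits work, and the balanced choice $w(v)=v^{-\beta}+v$ is what renders $|w'g|\le L_0 w$, closing the Gronwall-type inequality.
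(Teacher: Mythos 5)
Your proposal is correct in substance and belongs to the same DiPerna--Lions circle of ideas as the paper, but the execution is genuinely different. You renormalize first: mollify $E$ in $v$, use the commutator lemma (available here since $g$ is Lipschitz in $v$ uniformly in $t$ and $E\in L^\infty(0,T;L^1(0,\infty))$) to obtain $\partial_t|E|+\partial_v(g|E|)=\mathrm{sign}(E)\,S$ in the sense of distributions, and only then bring in the weight through compactly supported truncations $w\chi_{\varepsilon,R}$, killing the boundary terms at $0$ and $\infty$ with $0\le g(t,v)\le Av$ and the moment bounds~\eqref{z014}. The paper instead extends everything by zero to $\mathbb{R}$, keeps a smoothed absolute value $\Sigma_\epsilon(z)=z^2/\sqrt{z^2+\epsilon}$ together with a smoothed weight $w_\epsilon(v)=\sqrt{v^2+\epsilon}+(v^2+\epsilon)^{-\beta/2}$, integrates by parts globally, and never shows that the commutator $D^\delta$ vanishes: it is only bounded by a multiple of $\int w_\epsilon|\tilde{E}|\,dv$ and absorbed into the Gronwall constant $L_0$ (which in the paper's proof therefore also depends on $\beta$ and the mollifier). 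Your route buys a cleaner limiting structure (only local, unweighted commutator convergence is needed), at the price of two points you should not gloss over: (i) the time-integrated inequality featuring $\nu(0)$ requires knowing that $|E|$ attains the initial datum $|E_0|$ at $t=0$; the weak time-continuity in \Cref{definitionofsolution} does not give this for the absolute value, so you must either invoke the strong $L^1_{\mathrm{loc}}$ time-continuity of renormalized (equivalently, mild) solutions of the transport equation with Lipschitz drift, or, as the paper does, carry the already time-integrated weak formulation with its initial term through the mollification; and (ii) the pointwise bound $w(v)\le v^{-2\beta}+v^2$ is false as stated (take $\beta$ small and $v=1/2$) and should read $w(v)\le 2\big(v^{-2\beta}+v^2\big)$, which together with~\eqref{z014} and \Cref{lem.Swd} is all you need to make $w|E|$ and $wS$ integrable. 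Your key observation $|w'(v)g(s,v)|\le A\max(1,\beta)\,w(v)$ is exactly the mechanism the paper exploits as well, so the Gronwall closure is sound.
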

%%%%%%%%%%%%%%%%

Observe that the integrability properties of $c_1$ and $c_2$ ensure that $\nu(t)$ is well-defined for all $t\in [0,T]$, while the finiteness of the last term on the right-hand side of~\eqref{z001} is provided by the next result.

%%%%%%%%%%%%%%%%
\begin{lemma}\label{lem.Swd}
	$E\in L^\infty(0,T;L_{-2\beta,2}^1(0,\infty))$ and $S\in L^\infty(0,T;L_{-\beta,1}^1(0,\infty))$.
\end{lemma}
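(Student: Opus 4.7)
The first assertion is immediate from the hypothesis~\eqref{z014}: since both $c_1$ and $c_2$ lie in $L^\infty(0,T;L_{-2\beta,2}^1(0,\infty))$ and this is a vector space, the triangle inequality gives $E=c_1-c_2\in L^\infty(0,T;L_{-2\beta,2}^1(0,\infty))$ with norm bounded by the sum of the norms of $c_1$ and $c_2$.

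For the second assertion, since $|S|\leq |Q(c_1)|+|Q(c_2)|$, it suffices to show that each $Q(c_i)$ belongs to $L^\infty(0,T;L_{-\beta,1}^1(0,\infty))$. The plan is to bound $|Q(c)|$ by the sum of its nonnegative gain and loss parts, and to apply Tonelli's theorem, together with a change of variable in the gain integral and the symmetry $K(v_1,v_2)=K(v_2,v_1)$ in the loss integral, to obtain, for any measurable $\phi\geq 0$,
\begin{equation*}
\int_{0}^{\infty}\phi(v)|Q(c)(v)| dv \leq \frac{1}{2}\int_{0}^{\infty}\int_{0}^{\infty}\bigl[\phi(v_1+v_2)+\phi(v_1)+\phi(v_2)\bigr] K(v_1,v_2) c(v_1) c(v_2) dv_1 dv_2.
\end{equation*}
Specializing this to the two weights $\phi(v)=v^{-\beta}$ and $\phi(v)=v$ (the two pieces of the weight defining $L_{-\beta,1}^1(0,\infty)$) reduces the problem to estimating two explicit double integrals against $c(v_1)c(v_2)$.

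Each such integral I would bound by splitting $(0,\infty)^2$ into the three regions $(0,1)^2$, $(0,1)\times(1,\infty)$ together with its symmetric counterpart, and $(1,\infty)^2$ dictated by the piecewise estimate~\eqref{coagassum} on $K$. In each region the integrand admits a finite majorant of the form $\sum v_1^{a}v_2^{b}c(v_1)c(v_2)$, whose integral factors as a product of moments $M_m(c)$ for $m\in\{-2\beta,-\beta,0,1,2\}$; all of these are uniformly controlled on $[0,T]$ by~\eqref{z014} together with the elementary interpolations $M_{-\beta}(c)\leq M_{-2\beta}(c)+M_0(c)$, $M_0(c)\leq M_{-2\beta}(c)+M_2(c)$, and $M_1(c)\leq M_0(c)+M_2(c)$. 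The only genuinely delicate contribution is the corner $(0,1)^2$ paired with $\phi(v)=v^{-\beta}$, where $K(v_1,v_2)\leq k(v_1v_2)^{-\beta}$ combines with $\phi(v_1+v_2)+\phi(v_1)+\phi(v_2)\leq 3(v_1^{-\beta}+v_2^{-\beta})$ to produce the most singular integrand, of order $(v_1v_2)^{-\beta}(v_1^{-\beta}+v_2^{-\beta})$; this is precisely where the exponent $-2\beta$ in~\eqref{z014} is needed, since the integral closes as $M_{-2\beta}(c)\cdot M_{-\beta}(c)$. All other regions are more benign, and the argument yields a $t$-uniform bound on $\|Q(c_i)\|_{L_{-\beta,1}^1(0,\infty)}$ depending only on $k$ and $\|c_i\|_{L^\infty(0,T;L_{-2\beta,2}^1(0,\infty))}$, completing the proof.
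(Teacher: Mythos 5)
Your proposal is correct and follows essentially the same route as the paper: bound the weighted $L^1$-norm of $Q(c_i)$ by a symmetrized double integral via Fubini/Tonelli and the change of variables in the gain term, split $(0,\infty)^2$ into the regions dictated by~\eqref{coagassum}, and close each piece with products of moments $M_m(c_i)$, $m\in[-2\beta,2]$, all controlled by~\eqref{z014} (the paper likewise handles the singular corner $(0,1)^2$ through $M_{-2\beta}M_{-\beta}$). The only difference is cosmetic: you treat the two weights $v^{-\beta}$ and $v$ separately and spell out the interpolation inequalities for the intermediate moments, which the paper uses implicitly.
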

%%%%%%%%%%%%%%%%

\begin{proof}
The stated integrability properties of $E$ readily follow from~\eqref{z014}. 

Next, let $i\in\{1,2\}$ and $t\in [0,T]$. It follows from~\eqref{coagassum} and Fubini's theorem that
\begin{align*}
	\int_0^\infty w(v) |Q(c_i)(t,v)|dv & \le \frac{1}{2} \int_0^\infty \int_0^\infty w(v_1+v_2) K(v_1,v_2) c_i(t,v_1) c_i(t,v_2)dv_2dv_1 \\
	& \quad + \int_0^\infty \int_0^\infty w(v_1) K(v_1,v_2) c_i(t,v_1) c_i(t,v_2)dv_2dv_1 \\
	& \le 2 \int_0^\infty \int_0^\infty [w(v_1)+w(v_2)] K(v_1,v_2) c_i(t,v_1) c_i(t,v_2)dv_2dv_1 \\
	& \le 4k \int_0^1 \int_0^1 \left( v_1^{-\beta} + v_2^{-\beta} \right) (v_1v_2)^{-\beta} c_i(t,v_1) c_i(t,v_2)dv_2dv_1 \\
	& \quad + 8k \int_0^1 \int_1^\infty \left( v_1^{-\beta} + v_2 \right) v_1^{-\beta} v_2 c_i(t,v_1) c_i(t,v_2)dv_2dv_1 \\
	& \quad + 4k \int_1^\infty \int_1^\infty \left( v_1 + v_2 \right)^2 c_i(t,v_1) c_i(t,v_2)dv_2dv_1 \\
	& \le 8k M_{-2\beta}(c_i(t)) M_{-\beta}(c_i(t)) + 8k M_{-2\beta}(c_i(t)) M_{1}(c_i(t)) \\
	& \quad + 8k M_{-\beta}(c_i(t)) M_{2}(c_i(t)) + 16k M_2(c_i(t)) M_0(c_i(t)), 
\end{align*}
and we infer from the integrability properties~\eqref{z014} of $c_i$ that the right-hand side of the above inequality belongs to $L^\infty(0,T)$. Therefore, $Q(c_i)$ belongs to $L^\infty(0,T;L_{-\beta,1}^1(0,\infty))$ for $i\in\{1,2\}$, and so does $S=Q(c_1)-Q(c_2)$.
\end{proof}

Formally, \Cref{lem.diffineq} follows from the multiplication of the equation solved by $E$ (derived from~\eqref{eq:1.3} for $c_1$ and $c_2$) by $w\,\mathrm{sign}(E)$ and integration with respect to both volume and time. However, since $c_1-c_2$ need not be differentiable, a suitable regularization in the spirit of DiPerna \& Lions theory for transport equations \cite{diperna1989ordinary} is used, see also \cite[Appendix~6.1 \&~6.2]{perthame2006transport}. Specifically, we first extend $E$, $S$, and $g$ to $[0,T]\times\mathbb{R}$ by zero and define their extensions $\tilde{E}$, $\tilde{S}$, and $\tilde{g}$ as follows: for $t\in [0,T]$, 
\begin{align*}
	\tilde{E}(t,v):=\begin{cases}
		E(t,v)\ &\text{ for }\ v\in (0,\infty),\\
		0\ &\text{ for }\ v\in (-\infty,0),
	\end{cases}\quad 
	\tilde{E_{0}}(v):=\begin{cases}
	E(0,v)\ &\text{ for }\ v\in (0,\infty),\\
	0\ &\text{ for }\ v\in (-\infty,0),
\end{cases} 
\end{align*}
\begin{align*}
		\tilde{S}(t,v):=\begin{cases}
		S(t,v)\ &\text{ for }\ v\in (0,\infty),\\
		0\ &\text{ for }\ v\in (-\infty,0),
	\end{cases}\quad
	\tilde{g}(t,v):=\begin{cases}
		g(t,v)\ &\text{ for }\ v\in (0,\infty),\\
		0\ &\text{ for }\ v\in (-\infty,0). 
	\end{cases}
\end{align*}
It readily follows from~\eqref{growthassum} that $\tilde{g}$ is weakly differentiable with respect to $v$ on $\mathbb{R}$ and, for $t\in [0,T]$, $\partial_v \tilde{v}(t)$ is given by 
\begin{align*}
\partial_v \tilde{g}(t,v) =
	\begin{cases}
		0 & \text{ for } v \in (-\infty, 0), \\
		\partial_v g(t,v) & \text{ for } v \in (0, \infty).
	\end{cases}
\end{align*}
In particular, $\tilde{g}(t,.)\in W^{1,\infty}(I)$ for any bounded interval  $I\subset\mathbb{R}$ for every $t\in [0,T]$. This implies that $g(t,.)$ is absolutely continuous on $I$ for every $t\in [0,T]$ and satisfies 
\begin{align*}
	\tilde{g}(t,v_{2})-\tilde{g}(t,v_{1})=\int_{v_{1}}^{v_{2}} \partial_v \tilde{g}(t,v)dv, \quad (v_1,v_2)\in\mathbb{R}^2.
\end{align*}
 Moreover, from \eqref{growthassum}, we obtain
\begin{equation}\label{growthtilde}
	|\partial_v \tilde{g}(t,v)|\leq A \quad \text{and}\quad  |\tilde{g}(t,v)|\leq A|v|, \quad (t,v)\in [0,T]\times\mathbb{R}.
\end{equation}
Next, given a regularizing sequence of mollifiers $\rho_{\delta}(v):=\frac{1}{\delta}\rho(\frac{v}{\delta})$, $v\in\mathbb{R}$, with $\delta \in (0,1)$, $\rho\in \mathcal{D}(\mathbb{R})$, supp($\rho$) $\subset (-1,1)$, $\rho\geq 0$ and  $\|\rho\|_{L^{1}}=1$, we put 
	\begin{align*}
		\tilde{E}^{\delta}:=\tilde{E}\star \rho_{\delta},\quad 	\tilde{E_{0}}^{\delta}:=\tilde{E_{0}}\star \rho_{\delta}\quad \text{and}\quad \tilde{S}^{\delta}:=\tilde{S}\star \rho_{\delta}.
\end{align*}

For further use, we report the following properties of the convolution in weighted $L^1$-spaces.

%%%%%%%%%%%%%%%%
\begin{lemma}\label{lem.convolution}
	Let $m\ge 0$ and $\varphi\in L^1(\mathbb{R},(1+|v|^m)dv)$. For every $\delta\in (0,1)$, the $C^\infty$-smooth function $\rho_\delta\star\varphi$ belongs to $L^1(\mathbb{R},(1+|v|^m)dv)$ with
	\begin{equation*}
		\int_{\mathbb{R}} |(\rho_\delta\star\varphi)(v)| (1+|v|^m)dv \le 2^{m+1} \int_{\mathbb{R}} |\varphi(v)| (1+|v|^m)dv.
	\end{equation*}
	Moreover,
	\begin{equation*}
		\lim_{\delta\to 0} \int_{\mathbb{R}} |(\rho_\delta\star\varphi -\varphi)(v)| (1+|v|^m)dv = 0.
	\end{equation*}
\end{lemma}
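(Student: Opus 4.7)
The proof naturally splits along the two claims of the lemma, and both are standard facts about mollification in weighted $L^1$-spaces; the only subtlety is to keep track of how the weight $1+|v|^m$ interacts with the convolution, which ultimately produces the factor $2^{m+1}$.

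For the quantitative bound, the plan is to apply Fubini's theorem to the double integral
\begin{equation*}
\int_{\mathbb{R}} (1+|v|^m) |(\rho_\delta\star\varphi)(v)|\, dv \le \int_{\mathbb{R}} |\varphi(w)| \left( \int_{\mathbb{R}} (1+|v|^m) \rho_\delta(v-w)\, dv \right) dw,
\end{equation*}
and then estimate the inner integral by translating $u=v-w$. The key pointwise inequality is $(a+b)^m \le 2^m (a^m+b^m)$ for $a,b\ge 0$ and $m\ge 0$, which gives $|u+w|^m \le 2^m(|u|^m+|w|^m)$. Combined with $\mathrm{supp}(\rho_\delta)\subset (-\delta,\delta)\subset (-1,1)$, so that $|u|^m \le 1$ on the support of $\rho_\delta$, and $\|\rho_\delta\|_{L^1}=1$, the inner integral is bounded by $1 + 2^m + 2^m |w|^m$. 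Since $1\le 2^m$ for $m\ge 0$, this last quantity is dominated by $2^{m+1}(1+|w|^m)$, yielding the claimed bound.

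For the convergence statement, the plan is a standard density-plus-triangle-inequality argument. Given $\varepsilon>0$, first pick $\psi \in C_c(\mathbb{R})$ with $\int_{\mathbb{R}} |\varphi-\psi|(1+|v|^m)\, dv < \varepsilon$, which is possible because $C_c(\mathbb{R})$ is dense in $L^1(\mathbb{R},(1+|v|^m)dv)$. Then split
\begin{equation*}
\rho_\delta\star\varphi-\varphi = \rho_\delta\star(\varphi-\psi) + (\rho_\delta\star\psi-\psi) + (\psi-\varphi)
\end{equation*}
and estimate each term in the weighted $L^1$-norm. The first and third contribute at most $(2^{m+1}+1)\varepsilon$ by the bound established in the first step. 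For the middle term, I use that $\psi$ is uniformly continuous and compactly supported; therefore $\rho_\delta\star\psi\to\psi$ uniformly as $\delta\to 0$, and all the functions $\rho_\delta\star\psi$ have supports contained in a common compact set (for $\delta\in(0,1)$), so the weighted $L^1$-norm of $\rho_\delta\star\psi-\psi$ vanishes as $\delta\to 0$. Letting $\delta\to 0$ and then $\varepsilon\to 0$ concludes the argument.

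I do not anticipate any serious obstacle: both claims are textbook properties of mollification, and the only minor care needed is the verification of the constant $2^{m+1}$, which relies on the elementary inequality $(a+b)^m\le 2^m(a^m+b^m)$ together with the constraint $\delta<1$ that ensures $|u|^m\le 1$ on $\mathrm{supp}(\rho_\delta)$.
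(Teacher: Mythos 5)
Your proof is correct. For the quantitative bound you use exactly the ingredient the paper points to, namely Fubini together with the elementary inequality $(|v_1|+|v_2|)^m\le 2^m(|v_1|^m+|v_2|^m)$ and the fact that $|u|\le\delta<1$ on $\mathrm{supp}\,\rho_\delta$, and your bookkeeping $1+2^m+2^m|w|^m\le 2^{m+1}(1+|w|^m)$ is accurate, so the constant $2^{m+1}$ checks out. For the convergence statement your route differs from the one sketched in the paper (which omits the detailed proof): the authors suggest reducing to the classical unweighted case on a bounded interval $(-R,R)$ and controlling the tails on $(-\infty,-R)\cup(R,\infty)$ by the integrability of $\varphi$, whereas you redo the classical density argument directly in the weighted space, approximating $\varphi$ by $\psi\in C_c(\mathbb{R})$ (which is indeed dense in $L^1(\mathbb{R},(1+|v|^m)dv)$, this measure being Radon), using your uniform bound to absorb the error terms, and exploiting that $\rho_\delta\star\psi\to\psi$ uniformly with supports in the fixed compact $\mathrm{supp}\,\psi+[-1,1]$ on which the weight is bounded. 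Both arguments are standard and of comparable length; the paper's reduction has the slight advantage of quoting the $m=0$ result as a black box, while yours is self-contained and makes the role of the uniform weighted bound from the first part explicit (it is what lets the density argument go through), so no gap remains.
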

%%%%%%%%%%%%%%%%

\Cref{lem.convolution} is a classical property of the convolution in $\mathbb{R}$ for $m=0$ and the proof for $m>0$ relies on the splitting of the contribution of the interval $(-R,R)$ on which the classical result for $m=0$ can be applied and a control of the tails on $(-\infty,R)\cup (R,\infty)$ which is due to the integrability properties of $\varphi$ and the elementary inequality $(|v_1|+|v_2|)^m \le 2^m (|v_1|^m + |v_2|^m)$ for $(v_1,v_2)\in \mathbb{R}^2$. We omit the proof.  

We next report an immediate consequence of \Cref{lem.Swd} and \Cref{lem.convolution}.

%%%%%%%%%%%%%%%%
\begin{corollary}\label{cor.convdelta}
	For each $\delta\in (0,1)$, $\tilde{E}^\delta \in L^\infty(0,T;L^1(\mathbb{R},(1+v^2)dv))$ and $\tilde{S}^\delta \in L^\infty(0,T;L^1(\mathbb{R},(1+|v|)dv))$.
\end{corollary}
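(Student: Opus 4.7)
The plan is to deduce the corollary directly from Lemma~\ref{lem.Swd} together with Lemma~\ref{lem.convolution}, after verifying the elementary inclusion between the relevant weighted $L^1$-spaces on $(0,\infty)$ and on $\mathbb{R}$.

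First I would observe that, since $\beta>0$, a routine case split ($v\in (0,1)$ versus $v\ge 1$) yields the pointwise comparisons
\begin{equation*}
1+v \le 2\bigl(v^{-\beta}+v\bigr) \quad \text{and}\quad 1+v^2 \le 2\bigl(v^{-2\beta}+v^2\bigr), \qquad v\in (0,\infty).
\end{equation*}
Consequently, on $(0,\infty)$ we have the continuous embeddings $L^{1}_{-2\beta,2}(0,\infty)\hookrightarrow L^1\bigl((0,\infty);(1+v^2)dv\bigr)$ and $L^{1}_{-\beta,1}(0,\infty)\hookrightarrow L^1\bigl((0,\infty);(1+v)dv\bigr)$, with explicit constants.

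Combining these embeddings with Lemma~\ref{lem.Swd} and the zero extension across $v=0$, I obtain
\begin{equation*}
\tilde{E}\in L^\infty\bigl(0,T; L^1(\mathbb{R},(1+|v|^2)dv)\bigr) \quad \text{and}\quad \tilde{S}\in L^\infty\bigl(0,T; L^1(\mathbb{R},(1+|v|)dv)\bigr),
\end{equation*}
since the zero extension does not affect the $L^1$-norm and $v^2=|v|^2$ on $(0,\infty)$.

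Finally, for each fixed $\delta\in (0,1)$, I apply Lemma~\ref{lem.convolution} slice by slice in time with $m=2$ to $\tilde{E}(t,\cdot)$ and with $m=1$ to $\tilde{S}(t,\cdot)$. This gives the pointwise-in-$t$ bounds
\begin{equation*}
\|\tilde{E}^\delta(t,\cdot)\|_{L^1(\mathbb{R},(1+|v|^2)dv)} \le 8 \|\tilde{E}(t,\cdot)\|_{L^1(\mathbb{R},(1+|v|^2)dv)},
\end{equation*}
\begin{equation*}
\|\tilde{S}^\delta(t,\cdot)\|_{L^1(\mathbb{R},(1+|v|)dv)} \le 4 \|\tilde{S}(t,\cdot)\|_{L^1(\mathbb{R},(1+|v|)dv)},
\end{equation*}
and taking the essential supremum over $t\in [0,T]$ yields the claimed membership. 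There is no real obstacle here; the only point that needs a line of verification is the weight comparison ensuring that the singular weights near the origin dominate the constant weight $1$, which is guaranteed by the strict positivity of $\beta$.
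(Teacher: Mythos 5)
Your proposal is correct and follows exactly the route the paper intends: the corollary is stated there as an immediate consequence of Lemma~\ref{lem.Swd} and Lemma~\ref{lem.convolution}, via the weight comparisons $1+v\le 2\bigl(v^{-\beta}+v\bigr)$ and $1+v^2\le 2\bigl(v^{-2\beta}+v^2\bigr)$ (valid since $\beta>0$) and the convolution bound with $m=2$ for $\tilde{E}$ and $m=1$ for $\tilde{S}$. Nothing is missing.
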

%%%%%%%%%%%%%%%%

Next, according to the definition of $\tilde{E}$, $\tilde{S}$, and $\tilde{g}$, it readily follows from \Cref{definitionofsolution} that, for $t\in [0,T]$ and $\phi\in W^{1,\infty}(\mathbb{R})$, 
\begin{align*}
	\int_{\mathbb{R}} \tilde{E}(t,v_1)\phi(v_1)dv_1 & = \int_{\mathbb{R}} \tilde{E}_0(v_1)\phi(v_1)dv_1 + \int_0^t \int_{\mathbb{R}} \phi'(v_1) \tilde{g}(s,v_1) \tilde{E}(s,v_1)dv_1ds \\
	& \quad + \int_0^t \int_{\mathbb{R}} \phi(v_1) \tilde{S}(s,v_1)\phi(v_1)dv_1ds.
\end{align*}
For $(t,v)\in [0,T]\times\mathbb{R}$, we take $\phi(v_1)=\rho_\delta(v-v_1)$ in the above identity to deduce that $\tilde{E}^\delta$ is a smooth solution to 
\begin{equation*}
	\partial_t \tilde{E}^\delta + \partial_v \left( \rho_\delta\star \big(\tilde{g}\tilde{E}\big)\right) = \tilde{S}^\delta , \quad (t,v)\in (0,T)\times\mathbb{R}.
\end{equation*}
Equivalently,
\begin{equation}
	\partial_t \tilde{E}^\delta + \partial_v \left( \tilde{g}\tilde{E}^\delta \right) = D^\delta + \tilde{S}^\delta , \quad (t,v)\in (0,T)\times\mathbb{R}, \label{moleq}
\end{equation}
where
\begin{equation*}
	D^\delta :=  \partial_v \left( \tilde{g}\tilde{E}^\delta - \rho_\delta\star \big(\tilde{g}\tilde{E}\big) \right).
\end{equation*}

Next, for $\epsilon\in (0,1)$, we define 
\begin{equation*}
	\Sigma_{\epsilon}(z):=\frac{z^{2}}{\sqrt{z^{2}+\epsilon}}, \quad z\in\mathbb{R}.
\end{equation*}
Clearly, for every $\epsilon \in (0,1)$, $\Sigma_{\epsilon}$ is a continuously differentiable function satisfying the following properties: 
\begin{align}
	& \big| \Sigma_{\epsilon}(z) - |z| \big| \le \min\{\sqrt{\epsilon},|z|\}, \quad \big| \Sigma_{\epsilon}'(z) - \mathrm{sign}(z) \big| \le \frac{2\epsilon}{z^2+\epsilon} \le 2\quad \text{and}\quad \big|\Sigma_{\epsilon}'(z)\big|\le 2,\quad z\in \mathbb{R}. \label{sigmapointwiseconvergence}\\
	& \big| z \Sigma_{\epsilon}'(z) - \Sigma_\epsilon(z) \big| \le \min\{\sqrt{\epsilon},|z|\}, \quad  z\in \mathbb{R}. \label{sigmabound}	
\end{align}

Using the chain rule, we obtain the following equation from~\eqref{moleq}
\begin{equation}
	\partial_t	\Sigma_{\epsilon}(\tilde{E}^{\delta}) +	\partial_v\left(\tilde{g}\Sigma_{\epsilon}(\tilde{E}^{\delta})\right) = \Sigma_{\epsilon}'(\tilde{E}^{\delta}) D^{\delta} + \Sigma_{\epsilon}'(\tilde{E}^{\delta}) \tilde{S}^{\delta} - \partial_v \tilde{g} \left[ \tilde{E}^{\delta} \Sigma_{\epsilon}'(\tilde{E}^{\delta}) - \Sigma_{\epsilon}(\tilde{E}^{\delta}) \right] \label{z004}
\end{equation}
for $(t,v)\in (0,T)\times\mathbb{R}$. Now, setting $w_\epsilon(v) := \sqrt{v^2+\epsilon} + (v^2+\epsilon)^{-\beta/2}$ for $v\in\mathbb{R}$, we note that 
	\begin{equation}
		0 \le w_\epsilon(v) \le |v| + \sqrt{\epsilon} + |v|^{-\beta} \le \big( 1 + \sqrt{\epsilon} \big) w(|v|), \quad v\in\mathbb{R}\setminus\{0\}, \label{z002}
	\end{equation}
and
	\begin{align}
		\big| w_\epsilon(v_1) - w_\epsilon(v_2) \big| & \le \left| \int_{v_1}^{v_2} \left[ \frac{v}{\sqrt{v^2+\epsilon}} - \beta\frac{v}{(v^2+\epsilon)^{(\beta+2)/2}} \right]dv \right| \nonumber \\
		& \le \left( 1 + \beta \epsilon^{-(\beta+1)/2} \right) |v_1-v_2| \label{z003}
	\end{align}
for $(v_1,v_2)\in \mathbb{R}^2$. In addition,
\begin{equation}
	w_\epsilon(v) \le 2 \epsilon^{-\beta/2} (1+|v|), \qquad v\in\mathbb{R}. \label{z006}
\end{equation}

Before going on, let us check that all terms in~\eqref{z004} belong to the appropriate space.

%%%%%%%%%%%%%%%%
\begin{lemma}\label{convoestimate}
There is $C(T)>0$ depending on $K$, $g$, $c_1$ and $c_2$ such that, for $t\in [0,T]$, 
\begin{subequations}\label{z005}
	\begin{align}
		\int_{\mathbb{R}} (1+|v|) \left| \partial_v\left(\tilde{g}(t,v) \Sigma_{\epsilon}(\tilde{E}^{\delta}(t,v))\right) \right| dv & \le \frac{C(T)}{\delta}, \label{z005a}\\
		\int_{\mathbb{R}} (1+|v|) \left| \Sigma_{\epsilon}'(\tilde{E}^{\delta}(t,v)) D^{\delta}(t,v) \right| dv & \le \frac{C(T)}{\delta}, \label{z005b}\\
		\int_{\mathbb{R}} (1+|v|) \left| \Sigma_{\epsilon}'(\tilde{E}^{\delta}(t,v)) \tilde{S}^{\delta}(t,v) \right| dv & \le C(T), \label{z005c} \\
		\int_{\mathbb{R}} (1+|v|) \left| \partial_v \tilde{g}(t,v) \left[ \tilde{E}^{\delta} \Sigma_{\epsilon}'(\tilde{E}^{\delta}) - \Sigma_{\epsilon}(\tilde{E}^{\delta}) \right](t,v)  \right| dv & \le C(T), \label{z005d} \\
		\int_{\mathbb{R}} (1+|v|) \left| 	\partial_t	\Sigma_{\epsilon}(\tilde{E}^{\delta}(t,v)) \right| dv & \le \frac{C(T)}{\delta}. \label{z005e} 
	\end{align}
\end{subequations}
\end{lemma}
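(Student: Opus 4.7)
The approach is to expand each quantity using the product and chain rules, then combine the pointwise bounds~\eqref{growthtilde}, \eqref{sigmapointwiseconvergence} and~\eqref{sigmabound} with weighted $L^1$-estimates provided by \Cref{lem.Swd} and \Cref{cor.convdelta}. The preliminary step is the convolution estimate: since $\partial_v \tilde{E}^\delta = \rho_\delta'\star\tilde{E}$ with $\|\rho_\delta'\|_{L^1(\mathbb{R})} = \|\rho'\|_{L^1}/\delta$ and $\mathrm{supp}(\rho_\delta')\subset[-\delta,\delta]\subset[-1,1]$, a translation combined with $(a+b)^2\le 2a^2+2b^2$ gives, for $t\in[0,T]$,
\begin{equation*}
\int_{\mathbb{R}} (1+v^2)\,|(\rho_\delta'\star\tilde{E})(t,v)|\,dv \le \frac{C}{\delta}\,\|\tilde{E}(t)\|_{L^1(\mathbb{R},(1+v^2)dv)} \le \frac{C(T)}{\delta},
\end{equation*}
thanks to \Cref{lem.Swd}. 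An entirely analogous estimate applies to $\rho_\delta'\star(\tilde{g}\tilde{E})$, using $|\tilde g\tilde E|\le A|v||\tilde E|$.

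For \eqref{z005a}, the product and chain rules yield $\partial_v(\tilde{g}\Sigma_\epsilon(\tilde{E}^\delta)) = (\partial_v\tilde{g})\Sigma_\epsilon(\tilde{E}^\delta) + \tilde{g}\,\Sigma_\epsilon'(\tilde{E}^\delta)\,\partial_v\tilde{E}^\delta$. The first summand is bounded pointwise by $A|\tilde{E}^\delta|$ since $|\Sigma_\epsilon(z)|\le|z|$ and $|\partial_v\tilde{g}|\le A$, and its $(1+|v|)$-weighted integral is at most $A\|\tilde{E}^\delta(t)\|_{L^1(\mathbb{R},(1+|v|)dv)}\le C(T)$ by \Cref{cor.convdelta}. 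For the second, $|\tilde{g}|\le A|v|$ and $|\Sigma_\epsilon'|\le 2$ reduce the matter to bounding $\int_{\mathbb{R}}(1+|v|)|v|\,|\partial_v\tilde{E}^\delta|\,dv\le C\int_{\mathbb{R}}(1+v^2)|\partial_v\tilde{E}^\delta|\,dv\le C(T)/\delta$ by the preliminary estimate. For \eqref{z005b}, expand $D^\delta = (\partial_v\tilde{g})\tilde{E}^\delta + \tilde{g}\,\partial_v\tilde{E}^\delta - \rho_\delta'\star(\tilde{g}\tilde{E})$, use $|\Sigma_\epsilon'|\le 2$, and bound the three pieces as above: the first by $C(T)$, the second by $C(T)/\delta$, and the third by $C(T)/\delta$ thanks to the analogous convolution estimate mentioned above.

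Estimate \eqref{z005c} is immediate: $|\Sigma_\epsilon'|\le 2$ and $\tilde{S}^\delta\in L^\infty(0,T;L^1(\mathbb{R},(1+|v|)dv))$ from \Cref{cor.convdelta} give the bound $C(T)$, with no $1/\delta$ since no derivative of $\rho_\delta$ is involved. For \eqref{z005d}, the key point is~\eqref{sigmabound}, which gives $|\tilde{E}^\delta\Sigma_\epsilon'(\tilde{E}^\delta)-\Sigma_\epsilon(\tilde{E}^\delta)|\le|\tilde{E}^\delta|$; combined with $|\partial_v\tilde{g}|\le A$, the integrand is majorized by $A(1+|v|)|\tilde{E}^\delta|$, and integration yields $C(T)$ by \Cref{cor.convdelta}. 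Finally, \eqref{z005e} follows immediately from~\eqref{z004} upon solving for $\partial_t\Sigma_\epsilon(\tilde{E}^\delta)$ and applying the triangle inequality together with~\eqref{z005a}--\eqref{z005d}, retaining the worst constant $C(T)/\delta$.

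The main obstacle is the careful bookkeeping of the $1/\delta$ factors: these appear exactly in those terms in which a derivative of the mollifier ($\rho_\delta'$) or of the mollified function ($\partial_v\tilde{E}^\delta$) occurs, and must not be applied to $\tilde{S}^\delta$ or to $\tilde{E}^\delta$ itself, whose weighted $L^1$-bounds from \Cref{cor.convdelta} are $\delta$-uniform. Once this is recognized, each bound reduces to an application of the pointwise controls~\eqref{growthtilde}--\eqref{sigmabound} and the integrability properties of $\tilde{E}$ and $\tilde{S}$ from \Cref{lem.Swd}.
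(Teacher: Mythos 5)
Your argument is correct and essentially identical to the paper's: the same decompositions of $\partial_v\big(\tilde g\,\Sigma_\epsilon(\tilde E^\delta)\big)$ and of $D^\delta$ (the paper's~\eqref{z008}), the same pointwise bounds~\eqref{growthtilde}, \eqref{sigmapointwiseconvergence}, \eqref{sigmabound}, and the same extraction of the $1/\delta$ factor from $\rho_\delta'$ by a translation in the convolution. The only small imprecision is attributing the $\delta$-uniform weighted bounds on $\tilde E^\delta$ and $\tilde S^\delta$ to \Cref{cor.convdelta}, which only asserts membership for each fixed $\delta$; the uniform constant actually comes from the quantitative estimate of \Cref{lem.convolution} combined with \Cref{lem.Swd}, exactly as in the paper.
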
	
%%%%%%%%%%%%%%%%
	
\begin{proof}
	We begin with the proof of~\eqref{z005a} and~\eqref{z005b}. Since
	\begin{equation*}
		\partial_v\left(\tilde{g}\Sigma_{\epsilon}(\tilde{E}^{\delta})\right) = \Sigma_{\epsilon}(\tilde{E}^{\delta}) \partial_v\tilde{g}  + \tilde{g} \Sigma_{\epsilon}'(\tilde{E}^{\delta}) \big(\rho_\delta'\star\tilde{E}\big) %\label{z007}
	\end{equation*}
	and
	\begin{equation}
		D^\delta = \tilde{g} \big(\rho_\delta'\star\tilde{E}\big) + \tilde{E}^\delta \partial_v\tilde{g} - \rho_\delta'\star\big(\tilde{g}\tilde{E}\big), \label{z008}
	\end{equation}
	it suffices to study the integrability of each term separately. First, owing to~\eqref{growthtilde}, \eqref{sigmapointwiseconvergence}, and \Cref{lem.convolution} (with $m=1$), we obtain
	\begin{align*}
		\int_{\mathbb{R}} (1+|v|) \left| \Sigma_{\epsilon}(\tilde{E}^{\delta}(v)) \partial_v\tilde{g}(v) \right| dv & \le 2 \int_{\mathbb{R}} (1+|v|) \left| \tilde{E}^{\delta}(v) \partial_v\tilde{g}(v) \right| dv \le 2A \int_{\mathbb{R}} (1+|v|) \left| \tilde{E}^{\delta}(v) \right| dv \\ 
		& \le 8A \int_{\mathbb{R}} (1+|v|) \left| \tilde{E}(v) \right| dv = 8A \int_0^\infty (1+v) |E(v)|dv.
	\end{align*}
	We next infer from~\eqref{growthtilde}, \eqref{sigmapointwiseconvergence}, and Fubini's theorem that
	\begin{align*}
		\int_{\mathbb{R}} (1+|v|) \left| \tilde{g}(v) \Sigma_{\epsilon}'(\tilde{E}^{\delta}(v)) \big(\rho_\delta'\star\tilde{E}\big)(v) \right|dv & \le 2 \int_{\mathbb{R}} (1+|v|) \left| \tilde{g}(v) \big(\rho_\delta'\star\tilde{E}\big)(v) \right|dv \\
		& \le 2A \int_{\mathbb{R}}  \int_{\mathbb{R}} |v| (1+|v|) |\rho_\delta'(v_1)| |\tilde{E}(v-v_1)| dv_1dv \\
		& \le \frac{2A}{\delta} \int_{\mathbb{R}} \int_{\mathbb{R}} (1+|v+\delta v_1|)^2  |\rho'(v_1)| |\tilde{E}(v)| dv_1dv \\
		& \le \frac{2A}{\delta} \|\rho'\|_{L^1(\mathbb{R})} \int_{\mathbb{R}} (2+|v|)^2  |\tilde{E}(v)| dv \\
		& \le \frac{16A}{\delta} \|\rho'\|_{L^1(\mathbb{R})} \int_0^\infty (1+v^2)  |E(v)| dv
	\end{align*}
	and
	\begin{align*}
		 \int_{\mathbb{R}} (1+|v|) \left| \rho_\delta'\star\big(\tilde{g}\tilde{E}\big)(v) \right|dv & \le A \int_{\mathbb{R}}  \int_{\mathbb{R}} |v-v_1| (1+|v|) |\rho_\delta'(v_1)| |\tilde{E}(v-v_1)| dv_1dv \\
		& \le \frac{A}{\delta} \int_{\mathbb{R}} \int_{\mathbb{R}} (1+|v+\delta v_1|) |v|  |\rho'(v_1)| |\tilde{E}(v)| dv_1dv \\
		& \le \frac{8A}{\delta} \|\rho'\|_{L^1(\mathbb{R})} \int_0^\infty (1+v^2)  |E(v)| dv.
	\end{align*}
Collecting the above estimates and recalling~\eqref{z014} give~\eqref{z005a} and~\eqref{z005b}.

Next, the estimate~\eqref{z005c} readily follows from \Cref{lem.Swd} and \Cref{lem.convolution} while~\eqref{z005d} is a straightforward consequence of~\eqref{z014}, \eqref{growthtilde}, \eqref{sigmabound}, and \Cref{lem.convolution}.  Finally, we deduce~\eqref{z005e} from~\eqref{z004}, \eqref{z005a}, \eqref{z005b}, \eqref{z005c}, and~\eqref{z005d}.
\end{proof}	

\begin{proof}[Proof of \Cref{lem.diffineq}]
We multiply both sides of the above equation by $w_\epsilon$ and integrate with respect to $v$ on $\mathbb{R}$ to obtain
\begin{align*}
	& \frac{d}{dt} \int_{\mathbb{R}} w_\epsilon \Sigma_{\epsilon}(\tilde{E}^{\delta})dv + \int_{\mathbb{R}} w_\epsilon \partial_v \left( \tilde{g} \Sigma_{\epsilon}(\tilde{E}^{\delta}) \right) dv\\
	& \quad = \int_{\mathbb{R}} w_\epsilon \left(\Sigma_{\epsilon}'(\tilde{E}^{\delta}) D^{\delta} + \Sigma_{\epsilon}'(\tilde{E}^{\delta}) \tilde{S}^{\delta} + \partial_v \tilde{g} \left[ \Sigma_{\epsilon}(\tilde{E}^{\delta}) - \tilde{E}^{\delta}\Sigma_{\epsilon}'(\tilde{E}^{\delta}) \right] \right)dv,
\end{align*}
observing that all the terms appearing in the above equation are well-defined according to \Cref{convoestimate} and~\eqref{z006}.  Now, applying the integration by parts formula to the second term of the left-hand side of the above equation and observing that the boundary terms vanish by~\eqref{growthtilde} and \Cref{cor.convdelta}, we obtain
\begin{align*}
	&\frac{d}{dt} \int_{\mathbb{R}} w_\epsilon \Sigma_{\epsilon}(\tilde{E}^{\delta})dv - \int_{\mathbb{R}} w_\epsilon' \tilde{g} \Sigma_\epsilon(\tilde{E}^\delta) dv\\
	& \qquad = \int_{\mathbb{R}} w_\epsilon \left( \Sigma_{\epsilon}'(\tilde{E}^{\delta}) D^{\delta} + \Sigma_{\epsilon}'(\tilde{E}^{\delta}) \tilde{S}^{\delta} + \partial_v \tilde{g} \left[ \Sigma_{\epsilon}(\tilde{E}^{\delta}) - \tilde{E}^{\delta} \Sigma_{\epsilon}'(\tilde{E}^{\delta}) \right] \right)dv.
\end{align*}
Integrating with respect to time over $(0,t)$ for $t\in [0,T]$ gives
\begin{equation}
	\begin{split}
		 \int_{\mathbb{R}} w_\epsilon \Sigma_{\epsilon}(\tilde{E}^{\delta}(t))dv & =  \int_{\mathbb{R}} w_\epsilon \Sigma_{\epsilon}(\tilde{E}^{\delta}(0))dv + \int_0^t \int_{\mathbb{R}} w_\epsilon' \tilde{g} \Sigma_\epsilon(\tilde{E}^\delta) dvds \\
		 & \quad + \int_0^t \int_{\mathbb{R}} w_\epsilon \left( \Sigma_{\epsilon}'(\tilde{E}^{\delta}) D^{\delta} + \Sigma_{\epsilon}'(\tilde{E}^{\delta}) \tilde{S}^{\delta} + \partial_v \tilde{g} \left[ \Sigma_{\epsilon}(\tilde{E}^{\delta}) - \tilde{E}^{\delta} \Sigma_{\epsilon}'(\tilde{E}^{\delta}) \right] \right)dvds.
	\end{split}\label{epdelmain-1}
\end{equation}
At this point, we note that, by~\eqref{growthtilde} and~\eqref{sigmapointwiseconvergence},
\begin{align*}
	I_1 & := \left| \int_{\mathbb{R}} w_\epsilon'(v) \tilde{g}(v) \Sigma_\epsilon(\tilde{E}^\delta(v)) dv \right| \le 2A \int_{\mathbb{R}} |v w_\epsilon'(v)| \left| \tilde{E}^\delta(v) \right| dv \\
	& \le 2A(1+\beta) \int_{\mathbb{R}} w_\epsilon(v) \left| \tilde{E}^\delta(v) \right| dv.
\end{align*}
Since
\begin{align}
	\int_{\mathbb{R}} w_\epsilon(v) \left| \tilde{E}^\delta(v) \right| dv & \le \int_{\mathbb{R}} \int_{\mathbb{R}} w_\epsilon(v) \rho_\delta(v_1) \left| \tilde{E}(v-v_1) \right| dv_1dv \nonumber\\
	& = \int_{\mathbb{R}} \int_{\mathbb{R}} w_\epsilon(v+\delta v_1) \rho(v_1) \left| \tilde{E}(v) \right| dv_1dv \nonumber \\
	& \le \int_{\mathbb{R}} \left[ w_\epsilon(v) + \left( 1 + \beta \epsilon^{-(\beta+1)/2} \right)\delta \right] \left| \tilde{E}(v) \right| dv \label{z009}
\end{align}
by~\eqref{z003}, we conclude that
\begin{equation}
	I_1 \le 2A(1+\beta) \int_{\mathbb{R}} \left[ w_\epsilon(v) + \left( 1 + \beta \epsilon^{-(\beta+1)/2} \right)\delta \right] \left| \tilde{E}(v) \right| dv. \label{z010}
\end{equation}
Next, recalling~\eqref{z008}, we infer from~\eqref{growthtilde} and~\eqref{sigmapointwiseconvergence} that
\begin{align*}
	I_2 & := \left| \int_{\mathbb{R}} w_\epsilon(v) \Sigma_{\epsilon}'(\tilde{E}^{\delta}(v)) D^{\delta}(v) dv \right| \\& 
	\le 2 \int_{\mathbb{R}} w_\epsilon(v) \left| \tilde{g}(v) \big(\rho_\delta'\star\tilde{E}\big)(v) - \rho_\delta'\star\big(\tilde{g}\tilde{E}\big)(v) \right| dv + 2A \int_{\mathbb{R}} w_\epsilon(v) \left| \tilde{E}^{\delta}(v) \right| dv.
\end{align*}
Now, thanks to~\eqref{growthtilde} and~\eqref{z003},
\begin{align*}
	& \int_{\mathbb{R}} w_\epsilon(v) \left| \tilde{g}(v) \big(\rho_\delta'\star\tilde{E}\big)(v) - \rho_\delta'\star\big(\tilde{g}\tilde{E}\big)(v) \right| dv \\
	& \quad \le \int_{\mathbb{R}} w_\epsilon(v) \left| \int_{\mathbb{R}} \left[  \tilde{g}(v) \rho_\delta'(v_1) \tilde{E}(v-v_1) - \rho_\delta'(v_1) \tilde{g}(v-v_1) \tilde{E}(v-v_1) \right]dv_1 \right| dv \\
	& \quad \le \int_{\mathbb{R}} \int_{\mathbb{R}} w_\epsilon(v) \left|  \tilde{g}(v) - \tilde{g}(v-v_1) \right| |\rho_\delta'(v_1)| \left| \tilde{E}(v-v_1) \right| dv_1dv \\
	& \quad \le A \int_{\mathbb{R}} \int_{\mathbb{R}} w_\epsilon(v) |v_1 \rho_\delta'(v_1)| \left| \tilde{E}(v-v_1) \right| dv_1dv = A \int_{\mathbb{R}} \int_{\mathbb{R}} w_\epsilon(v+\delta v_1) |v_1 \rho'(v_1)| \left| \tilde{E}(v) \right| dv_1dv \\
	& \quad \le A I(\rho) \int_{\mathbb{R}} \left[ w_\epsilon(v) + \left( 1 + \beta \epsilon^{-(\beta+1)/2} \right)\delta \right] \left| \tilde{E}(v) \right| dv,
\end{align*}
with 
\begin{equation*}
	I(\rho) := \int_{\mathbb{R}} |v \rho'(v)| dv.
\end{equation*}
Combining the above two estimates with~\eqref{z009} leads us to 
\begin{equation}
	I_2 \le (2+I(\rho))A \int_{\mathbb{R}} \left[ w_\epsilon(v) + \left( 1 + \beta \epsilon^{-(\beta+1)/2} \right)\delta \right] \left| \tilde{E}(v) \right| dv. \label{z011}
\end{equation}
We next use once more~\eqref{growthtilde} and~\eqref{z009}, along with~\eqref{sigmabound}, to obtain
\begin{align}
	I_3 & := \left| \int_{\mathbb{R}} w_\epsilon(v) \partial_v \tilde{g}(v) \left[ \Sigma_{\epsilon}(\tilde{E}^{\delta}(v)) - \tilde{E}^{\delta}(v) \Sigma_{\epsilon}'(\tilde{E}^{\delta}(v)) \right]dv \right| \nonumber \\
	& \le A \int_{\mathbb{R}} w_\epsilon(v) \left| \tilde{E}^{\delta}(v) \right| dv \nonumber \\
	& \le A  \int_{\mathbb{R}} \left[ w_\epsilon(v) + \left( 1 + \beta \epsilon^{-(\beta+1)/2} \right)\delta \right] \left| \tilde{E}(v) \right| dv. \label{z012}
\end{align}
Collecting~\eqref{z010}, \eqref{z011}, and~\eqref{z012} and using again~\eqref{z009}, we infer from~\eqref{epdelmain-1} that there is $L_0>0$ depending only on $A$ and $I(\rho)$ such that
\begin{equation}
	\begin{split}
	\int_{\mathbb{R}} w_\epsilon \Sigma_\epsilon(\tilde{E}^\delta(t))dv & \le \int_{\mathbb{R}} \left[ w_\epsilon + \left( 1 + \beta \epsilon^{-(\beta+1)/2} \right)\delta \right] \left| \tilde{E}_0 \right| dv + \int_0^t \int_{\mathbb{R}} w_\epsilon \tilde{S}^\delta(s) \Sigma_\epsilon(\tilde{E}^\delta(s)) dvds \\
	& \quad + L_0 \int_0^t \int_{\mathbb{R}} \left[ w_\epsilon + \left( 1 + \beta \epsilon^{-(\beta+1)/2} \right)\delta \right] \left| \tilde{E}(s) \right| dvds.
	\end{split} \label{epdelmain-2}
\end{equation}

In order to pass the limit $\delta \rightarrow 0$ on both sides of~\eqref{epdelmain-2}, we consider each term separately. Using~\eqref{sigmapointwiseconvergence}, let us first consider the following term as
\begin{equation*}
	\left|\int_{\mathbb{R}} w_\epsilon(v) \left[	\Sigma_{\epsilon}(\tilde{E}^{\delta}(t,v)) - \Sigma_{\epsilon}(\tilde{E}(t,v)) \right] dv \right| \leq 2 \int_{\mathbb{R}} w_\epsilon(v) \left| \tilde{E}^{\delta}(t,v) - \tilde{E}(t,v) \right| dv. 
\end{equation*}
It readily follows from~\eqref{z006},  \Cref{lem.convolution}, and Lebesgue's dominated convergence theorem that we can let $\delta\rightarrow 0$ in the above inequality and find
\begin{equation}
	\lim_{\delta\to 0} \int_{\mathbb{R}} w_\epsilon(v) \Sigma_{\epsilon}(\tilde{E}^{\delta}(t,v)) dv = \int_{\mathbb{R}} w_\epsilon(v)  \Sigma_{\epsilon}(\tilde{E}(t,v)) dv = \int_{0}^{\infty} w_\epsilon(v)  \Sigma_{\epsilon}(E(t,v)) dv. \label{dellim-3}
\end{equation}
Similarly,
	\begin{align}
	&\left| \int_0^t \int_{\mathbb{R}} w_\epsilon(v) \left[ \Sigma_{\epsilon}'(\tilde{E}^{\delta}(s,v)) \tilde{S}^{\delta}(s,v) - \Sigma_{\epsilon}'(\tilde{E}(s,v)) \tilde{S}(s,v) \right] dvds \right|\nonumber\\
	&\leq \int_0^t \int_{\mathbb{R}} w_\epsilon(v) \left| \Sigma_{\epsilon}'(\tilde{E}^{\delta}(s,v)) \right| \left| \tilde{S}^{\delta}(s,v) - \tilde{S}(s,v)\right| dvds \nonumber\\
	& \quad + \int_0^t \int_{\mathbb{R}} w_\epsilon(v) \left| \tilde{S}(s,v) \right| \left| \Sigma_{\epsilon}'(\tilde{E}^{\delta}(s,v)) - \Sigma_{\epsilon}'(\tilde{E}(s,v)) \right| dvds.\label{coagepsilon-1}
\end{align}
On the one hand, by~\eqref{sigmapointwiseconvergence},
\begin{equation*}
	\int_0^t \int_{\mathbb{R}} w_\epsilon(v) \left| \Sigma_{\epsilon}'(\tilde{E}^{\delta}(s,v)) \right| \left| \tilde{S}^{\delta}(s,v) - \tilde{S}(s,v)\right| dvds \le 2 \int_0^t \int_{\mathbb{R}} w_\epsilon(v) \left| \tilde{S}^{\delta}(s,v) - \tilde{S}(s,v)\right| dvds,
\end{equation*}
and we use again~\eqref{z006},  \Cref{lem.convolution}, and Lebesgue's dominated convergence theorem, together with \Cref{lem.Swd} , to conclude that
\begin{equation}
	\lim_{\delta\to 0} \int_0^t \int_{\mathbb{R}} w_\epsilon(v) \left| \Sigma_{\epsilon}'(\tilde{E}^{\delta}(s,v)) \right| \left| \tilde{S}^{\delta}(s,v) - \tilde{S}(s,v)\right| dvds = 0.  \label{coagepsilon-2}
\end{equation}
On the other hand, $w_\epsilon \tilde{S}$ belongs to $L^1((0,T)\times\mathbb{R})$ thanks to \Cref{lem.Swd} and~\eqref{z002}, while \Cref{lem.convolution}, the Lipschitz continuity of $\Sigma_\epsilon'$, and~\eqref{sigmapointwiseconvergence} ensure that, at least for a subsequence (not relabeled),
\begin{equation*}
	\lim_{\delta\to 0} \left| \Sigma_{\epsilon}'(\tilde{E}^{\delta}(s,v)) - \Sigma_{\epsilon}'(\tilde{E}(s,v)) \right| = 0 \;\;\text{ for a.e. }\;\; (s,x)\in (0,t)\times\mathbb{R}
\end{equation*}
with
\begin{equation*}
	\left| \Sigma_{\epsilon}'(\tilde{E}^{\delta}(s,v)) - \Sigma_{\epsilon}'(\tilde{E}(s,v)) \right| \le 4 \;\;\text{ for a.e. }\;\; (s,x)\in (0,t)\times\mathbb{R}.
\end{equation*}
We are then in a position to apply Lebesgue's dominated convergence theorem and obtain
\begin{equation}
	\lim_{\delta\to 0} \int_0^t \int_{\mathbb{R}} w_\epsilon(v) \left| \tilde{S}(s,v) \right| \left| \Sigma_{\epsilon}'(\tilde{E}^{\delta}(s,v)) - \Sigma_{\epsilon}'(\tilde{E}(s,v)) \right| = 0.  \label{coagepsilon-3}
\end{equation}
Gathering~\eqref{coagepsilon-1}, \eqref{coagepsilon-2}, and~\eqref{coagepsilon-3}, we end up with
\begin{align}
	\lim_{\delta\to 0} \int_0^t \int_{\mathbb{R}} w_\epsilon(v) \Sigma_{\epsilon}'(\tilde{E}^{\delta}(s,v)) \tilde{S}^{\delta}(s,v) dvds & = \int_0^t \int_{\mathbb{R}} w_\epsilon(v) \Sigma_{\epsilon}'(\tilde{E}(s,v)) \tilde{S}(s,v) dvds \nonumber \\
	& = \int_0^t \int_{0}^\infty w_\epsilon(v) \Sigma_{\epsilon}'(E(s,v)) S(s,v) dvds. \label{coagepsilon-4}
\end{align}
Owing to~\eqref{dellim-3} and~\eqref{coagepsilon-4}, we can pass to the limit $\delta\rightarrow 0$ in~\eqref{epdelmain-2} and end up with
\begin{equation}
	\begin{split}
		\int_0^\infty w_\epsilon \Sigma_\epsilon(E(t)) dv & \le \int_{0}^\infty w_\epsilon \left| E_0 \right| dv + \int_0^t \int_{0}^\infty w_\epsilon S(s) \Sigma_\epsilon(E(s)) dvds \\
		& \quad + L_0 \int_0^t \int_{0}^\infty w_\epsilon \left| E(s) \right| dvds.
	\end{split} \label{epdelmain-3}
\end{equation}

We now perform the limit $\epsilon\rightarrow 0$ in~\eqref{epdelmain-3}. First, since 
\begin{equation*}
	\lim_{\epsilon\to 0} w_\epsilon(v) = w(v) = v + v^{-\beta}, \quad v\in (0,\infty),
\end{equation*}
and $E\in L^\infty(0,T; L_{-2\beta,2}^1(0,\infty))$ by \Cref{lem.Swd}, we readily infer from~\eqref{sigmapointwiseconvergence} and Lebesgue's dominated convergence theorem that
\begin{align*}
	\lim_{\epsilon\to 0} \int_0^\infty w_\epsilon(v) \Sigma_\epsilon(E(t,v)) dv & = \int_0^\infty w(v) |E(t,v)| dv, \\
	\lim_{\epsilon\to 0} \int_0^\infty w_\epsilon(v) \Sigma_\epsilon(E_0(v)) dv & = \int_0^\infty w(v) |E_0(v)| dv, \\
	\lim_{\epsilon\to 0} \int_0^t \int_0^\infty w_\epsilon(v) \Sigma_\epsilon(E(s,v)) dvds & = \int_0^t \int_0^\infty w(v) |E(s,v)| dvds. \\
\end{align*}
Next, we recall that
\begin{equation*}
	\lim_{\epsilon\to 0} w_\epsilon(v) S(s,v) \Sigma_\epsilon'(E(s,v)) = w_(v) S(s,v) \,\mathrm{sign}(E(s,v)) \;\;\text{ for a.e. }\;\; (s,x)\in (0,t)\times (0,\infty),
\end{equation*}
with
\begin{equation*}
	\left| w_\epsilon(v) S(s,v) \Sigma_\epsilon'(E(s,v)) \right| \le \left( v + \sqrt{\epsilon} + v^{-\beta} \right) |S(s,v)| \le 2 w(v) |S(s,v)|,
\end{equation*}
due to~\eqref{sigmapointwiseconvergence} and~\eqref{z002}. A further application of Lebesgue's dominated convergence theorem gives
\begin{equation*}
	\lim_{\delta\to 0} \int_0^t \int_{0}^\infty w_\epsilon S(s) \Sigma_\epsilon(E(s)) dvds = \int_0^t \int_{0}^\infty w S(s) \,\mathrm{sign}(E(s)) dvds.
\end{equation*}
We may then let $\epsilon\to 0$ in~\eqref{epdelmain-3} to obtain \Cref{lem.diffineq}.
\end{proof}

Owing to \Cref{lem.diffineq}, we are left with estimating the contribution of the coagulation term to complete the proof of the continuous dependence, a computation which is by now classical, see \cite[Section~8.2.5]{BLL_book} for instance, but which we sketch below for the sake of completeness. 
	
\begin{proof}[Proof of \Cref{continuousdependence result}]
Let us consider the last term on the right-hand side of~\eqref{z001}:
\begin{align*}
	&\int_{0}^{t}\int_{0}^{\infty} w(v)\, \mathrm{sign}(E(s,v)) S(s,v) dvds\nonumber\\
	& \quad = \frac{1}{2} \int_{0}^{t}\int_{0}^{\infty}\int_{0}^{\infty} K(v_{1},v_{2}) W(s,v_{1},v_{2}) (c_1+c_2)(s,v_{1}) E(s,v_{2}) dv_{2}  dv_{1}ds,
\end{align*}
where
\begin{align*}
	W(s,v_{1},v_{2}) & = w(v_{1}+v_{2})\, \mathrm{sign}(E(s,v_{1}+v_{2})) - w(v_{1}) \,\mathrm{sign}\;(E(s,v_{1})) - w(v_{2}) \,\mathrm{sign}(E(s,v_{2})). 
\end{align*}
Using the properties of the sign function, we get 
\begin{align*}
	W(s,v_{1},v_{2}) E(s,v_{2}) &\leq \left( v_1 + v_2 + (v_1+v_2)^{-\beta} \right) |E(s,v_{2})| + \left( v_1 + v_1^{-\beta} \right) |E(s,v_{2})| \\
	& \quad -  \left( v_2 + v_2^{-\beta} \right) |E(s,v_{2})| \\
	& \leq  \left( 2v_1 + v_1^{-\beta} \right) |E(s,v_{2})| .
\end{align*}
Consequently, 
\begin{align*}
	&\int_{0}^{t}\int_{0}^{\infty}\int_{0}^{\infty} K(v_{1},v_{2}) W(s,v_{1},v_{2}) (c_1+c_2)(s,v_{1}) E(s,v_{2}) dv_{2}  dv_{1}ds \\
	& \quad \leq \int_{0}^{t}\int_{0}^{\infty}\int_{0}^{\infty} K(v_{1},v_{2}) \left( 2v_1 + v_1^{-\beta} \right) (c_1+c_2)(s,v_{1}) |E(s,v_{2})| dv_{2}  dv_{1}ds \\
	&\quad \leq k \int_{0}^{t} \int_{0}^{1} \int_{0}^{1} (v_{1}v_{2})^{-\beta} \left( 2v_1 + v_1^{-\beta} \right) (c_1+c_2)(s,v_{1}) |E(s,v_{2})| dv_{2}  dv_{1}ds \\
	&\quad\quad + 2 k \int_{0}^{t} \int_{0}^{1} \int_{1}^{\infty} v_{1}^{-\beta} v_2 \left( 2v_1 + v_1^{-\beta} \right) (c_1+c_2)(s,v_{1}) |E(s,v_{2})| dv_{2}  dv_{1}ds \\
	& \quad\quad + k \int_{0}^{t} \int_{1}^{\infty} \int_{1}^{\infty} (v_{1}+v_{2}) \left( 2v_1 + v_1^{-\beta} \right) (c_1+c_2)(s,v_{1}) |E(s,v_{2})| dv_{2}  dv_{1}ds \\
	&\quad \leq 6k \int_0^t  \left[ M_{-2\beta}((c_1+c_2)(s)) + M_2((c_1+c_2)(s)) \right] \int_{0}^{\infty} w(v) |E(t,v)| dvds.
\end{align*}
Combining the above inequality with~\eqref{z001} leads us to the following differential inequality for $\nu$:
\begin{equation*}
	\nu(t) \le \nu(0) + (6k+L_0) \int_0^t \left[ 1+ M_{-2\beta}((c_1+c_2)(s)) + M_2((c_1+c_2)(s)) \right]\nu(s) ds , \quad t\in [0,T]. %\label{z013}
\end{equation*}
Recalling~\eqref{z014} and applying Gronwall's lemma gives the stated result.
\end{proof}

\begin{proof}[Proof of \Cref{uniqueness theorem}]   
	Let $c_{1}$ and $c_{2}$ be two weak solutions to \eqref{eq:1.3}--\eqref{eq:1.4} in the sense of  \Cref{uniqueness theorem} corresponding to the initial data $c_{1,0}$ and $c_{2,0}$, respectively. Assume that $c_{1,0} = c_{2,0}$ a.e. in $(0,\infty)$. It readily follows from \Cref{continuousdependence result} that
	\begin{equation*}
		\int_{0}^{\infty} \left( v^{-\beta} + v \right)	|c_{1}(t,v)-c_{2}(t,v)| dv =0,
	\end{equation*}
	which implies that $c_{1}(t)=c_{2}(t)$ for all $t$ in $[0,T]$. This completes the proof of \Cref{uniqueness theorem}.
\end{proof}

%%%%%%%%%%%%%%%%
%%%%%%%%%%%%%%%%
\section*{Acknowledgments}
%%%%%%%%%%%%%%%%
%%%%%%%%%%%%%%%%

The research of SS is supported by Council of Scientific and Industrial Research, India, under the grant agreement No.09/143(0987)/2019-EMR-I. A portion of this research was funded by the Indo-French Centre for Applied Mathematics (MA/IFCAM/19/58) as part of the project Collision-induced breakage and coagulation: dynamics and numerics. Part of this work was performed while PhL enjoyed the hospitality of the Department of Mathematics, Indian Institute of Technology Roorkee.

%%%%%%%%%%%%%%%%
%%%%%%%%%%%%%%%%
\bibliographystyle{siam}
\bibliography{GrowthCoagulationEquation}

\begin{thebibliography}{10}

\bibitem{AcklehDeng2003}
{\sc A.~S. Ackleh and K.~Deng}, {\em On a first-order hyperbolic coagulation
  model}, Math. Methods Appl. Sci., 26 (2003), pp.~703--715.

\bibitem{Adler1997}
{\sc R.~Adler}, {\em Superprocesses and plankton dynamics.}, tech. rep., North
  Carolina Univ. at Chapel Hill, Dept. of Statistics, 1997.

\bibitem{Banasiak2012}
{\sc J.~Banasiak}, {\em Transport processes with coagulation and strong
  fragmentation}, Discrete Contin. Dyn. Syst. Ser. B, 17 (2012), pp.~445--472.

\bibitem{BLL_book}
{\sc J.~Banasiak, W.~Lamb, and {\relax Ph}.~Lauren{\c{c}}ot}, {\em Analytic
  methods for coagulation-fragmentation models}, Monogr. Res. Notes Math., Boca
  Raton, FL: CRC Press, 2020.

\bibitem{BarikGiriLaurencot2019}
{\sc P.~K. Barik, A.~K. Giri, and {\relax Ph}.~Lauren{\c{c}}ot}, {\em
  Mass-conserving solutions to the {S}moluchowski coagulation equation with
  singular kernel}, Proc. R. Soc. Edinb., Sect. A, Math., 150 (2020),
  pp.~1805--1825.

\bibitem{Berry1969}
{\sc E.~X. Berry}, {\em A mathematical framework for cloud models}, J. Atmos.
  Sci., 26 (1969), pp.~109--111.

\bibitem{ClarkKatsouros1999}
{\sc J.~M.~C. Clark and V.~Katsouros}, {\em Stably coalescent stochastic
  froths}, Adv. in Appl. Probab., 31 (1999), pp.~199--219.

\bibitem{ColletGoudon1999}
{\sc J.-F. Collet and T.~Goudon}, {\em Lifshitz-{Slyozov} equations: {The}
  model with encounters}, Transp. Theory Stat. Phys., 28 (1999), pp.~545--573.

\bibitem{ColletGoudon2000}
{\sc J.-F. Collet and T.~Goudon}, {\em On solutions of the {L}ifshitz-{S}lyozov
  model}, Nonlinearity, 13 (2000), pp.~1239--1262.

\bibitem{CristianVelazquez}
{\sc I.~Cristian and J.~J.~L. Velázquez}, {\em Coagulation equations for
  non-spherical clusters}, 2023.

\bibitem{CamejoGroplerWarnecke2015}
{\sc C.~Cueto~Camejo, R.~Gr{\"o}pler, and G.~Warnecke}, {\em Regular solutions
  to the coagulation equations with singular kernels}, Math. Methods Appl.
  Sci., 38 (2015), pp.~2171--2184.

\bibitem{CamejoWarnecke2015}
{\sc C.~Cueto~Camejo and G.~Warnecke}, {\em The singular kernel coagulation
  equation with multifragmentation}, Math. Methods Appl. Sci., 38 (2015),
  pp.~2953--2973.

\bibitem{diperna1989ordinary}
{\sc R.~J. DiPerna and P.-L. Lions}, {\em Ordinary differential equations,
  transport theory and sobolev spaces}, Inventiones mathematicae, 98 (1989),
  pp.~511--547.

\bibitem{EMRR2005}
{\sc M.~Escobedo, S.~Mischler, and M.~Rodriguez~Ricard}, {\em On
  self-similarity and stationary problem for fragmentation and coagulation
  models}, Ann. Inst. H. Poincar\'e Anal. Non Lin\'eaire, 22 (2005),
  pp.~99--125.

\bibitem{Friedlander2000}
{\sc S.~K. Friedlander}, {\em Smoke, dust, and haze. {F}undamentals of aerosol
  dynamics. $2$nd edition}, Oxford Univ. Press, 2000.

\bibitem{FriedmanReitich1990}
{\sc A.~Friedman and F.~Reitich}, {\em A hyperbolic inverse problem arising in
  the evolution of combustion aerosol}, Arch. Ration. Mech. Anal., 110 (1990),
  pp.~313--350.

\bibitem{Gajewski1983}
{\sc H.~Gajewski}, {\em On a first order partial differential equation with
  nonlocal nonlinearity}, Math. Nachr., 111 (1983), pp.~289--300.

\bibitem{GajewskiZacharias1982}
{\sc H.~Gajewski and K.~Zacharias}, {\em On an initial value problem for a
  coagulation equation with growth term}, Math. Nachr., 109 (1982),
  pp.~135--156.

\bibitem{HingantSepulveda2015}
{\sc E.~Hingant and M.~Sep{\'u}lveda}, {\em Derivation and mathematical study
  of a sorption-coagulation equation}, Nonlinearity, 28 (2015), pp.~3623--3661.

\bibitem{Kapur1972}
{\sc P.~Kapur}, {\em Kinetics of granulation by non-random coalescence
  mechanism}, Chemical Engineering Science, 27 (1972), pp.~1863--1869.

\bibitem{Laurencot2001}
{\sc {\relax Ph}.~Lauren{\c{c}}ot}, {\em The {Lifshitz}-{Slyozov} equation with
  encounters}, Math. Models Methods Appl. Sci., 11 (2001), pp.~731--748.

\bibitem{Le1977}
{\sc C.~H. L{\^e}}, {\em Etude de la classe des op\'erateurs {$m$}-accr\'etifs
  de {$L^{1}(\Omega )$} et accr\'etifs dans {$L^{\infty }(\Omega )$}}, PhD
  thesis, Universit{\'e} de Paris {VI}, 1977.
\newblock Th{\`e}se de {$3^{\text{\`eme}}$} cycle.

\bibitem{LevinSedunov1968}
{\sc L.~Levin and Y.~Sedunov}, {\em The theoretical model of the drop spectrum
  formation process in clouds}, PAGEOPH, 69 (1968), pp.~320--335.

\bibitem{Lissauer1993}
{\sc J.~J. Lissauer}, {\em Planet formation}, Annu. Rev. Astron. Astrophys., 31
  (1993), pp.~129--172.

\bibitem{LushnikovKulmala2000}
{\sc A.~A. Lushnikov and M.~Kulmala}, {\em Nucleation burst in a coagulating
  system}, Phys. Rev. E, 62 (2000), pp.~4932--4939.

\bibitem{perthame2006transport}
{\sc B.~Perthame}, {\em Transport equations in biology}, Springer Science \&
  Business Media, 2006.

\bibitem{Safronov1972}
{\sc V.~Safronov}, {\em Evolution of the protoplanetary cloud and formation of
  the earth and the planets}, Israel Program for Scientific Translations,
  Jerusalem, 1972.

\bibitem{Smoluchowski1917}
{\sc M.~v. {Smoluchowski}}, {\em Versuch einer mathematischen {T}heorie der
  {K}oagulationskinetik kolloider {L}{\"o}sungen}, Zeitschrift f. phys. Chemie,
  92 (1917), pp.~129--168.

\bibitem{Stewart1989}
{\sc I.~W. Stewart}, {\em A global existence theorem for the general
  coagulation-fragmentation equation with unbounded kernels}, Math. Methods
  Appl. Sci., 11 (1989), pp.~627--648.

\bibitem{Stewart1990}
\leavevmode\vrule height 2pt depth -1.6pt width 23pt, {\em A uniqueness theorem
  for the coagulation-fragmentation equation}, Math. Proc. Cambridge Philos.
  Soc., 107 (1990), pp.~573--578.

\bibitem{Vrabie2003}
{\sc I.~I. Vrabie}, {\em {$C_0$}-semigroups and applications}, vol.~191 of
  North-Holland Mathematics Studies, North-Holland Publishing Co., Amsterdam,
  2003.

\bibitem{Wells2018}
{\sc J.~Wells}, {\em Modelling coagulation in industrial spray drying: {A}n
  efficient one-dimensional population balance approach}, PhD thesis,
  University of Strathclyde, 2018.

\bibitem{Ziff1980}
{\sc R.~M. Ziff}, {\em Kinetics of polymerization}, J. Statist. Phys., 23
  (1980), pp.~241--263.

\end{thebibliography}
%%%%%%%%%%%%%%%%
%%%%%%%%%%%%%%%%

\end{document}